\setlist[enumerate,1]{label=(\alph*), ref=(\alph*)}
\newcommand{\soutb}[1]{\bgroup\markoverwith
  {\textcolor{Brown}{\rule[0.5ex]{2pt}{1pt}}}\ULon{#1}\egroup}
 \DeclareMathOperator*{\argmin}{arg\,min}
 \newcommand\innprod[2]{\left\langle{}#1{},{}#2{}\right\rangle}%
 \newcommand\func[3]{#1:#2\rightarrow#3}
 \newcommand\R{\mathbb R}%
 \newcommand\N{\mathbb N}%
 \newcommand\Ni{\mathcal{N}_i}%
 \newcommand\C{\mathcal C}%
 \newcommand\X{\mathcal X}%
 \newcommand\K{\mathcal K}%
 \newcommand\Rinf{\overline\R}
 \newcommand\ov[1]{\overline #1}%
 \newcommand\dom{\mathrm{dom}}%
 \newcommand\interior{\mathrm{int}}%
 \newcommand\idom{\mathrm{int\, dom}}%
 \newcommand\dist{\mathrm{dist}}%
 \newcommandx\seq[3][{2=k\in \N_{0}},{3={}}]{\{#1\}_{#2}^{#3}}
 \newcommandx\subseq[3][{2=j\in \N_{0}},{3={}}]{\{#1\}_{#2}^{#3}}
 \newtheorem{thm}{Theorem}
 \numberwithin{thm}{section}
 \newtheorem{lem}[thm]{Lemma}
 \newtheorem{prop}[thm]{Proposition}
 \newtheorem{cor}[thm]{Corollary}
 \newtheorem{rem}[thm]{Remark}
 \newtheorem{exa}[thm]{Example}
 \newtheorem{defin}[thm]{Definition}
 \newtheorem{fact}[thm]{Fact}
 \newtheorem{ass}{Assumption}
 \journalname{}
\begin{document}

\title{Minimizing Smooth Kurdyka-{\L}ojasiewicz Functions via Generalized Descent Methods: Convergence Rate and Complexity
}

\titlerunning{Minimizing Smooth Kurdyka-{\L}ojasiewicz Functions via Generalized Descent Methods}        

\author{Masoud Ahookhosh \and Susan Ghaderi \and Alireza Kabgani \and Morteza Rahimi
}


\institute{M. Ahookhosh, A. Kabgani, M. Rahimi \at
              Department of Mathematics, University of Antwerp, Middelheimlaan 1, B-2020 Antwerp, Belgium. \\
              Tel.: +123-45-678910\\
              \email{masoud.ahookhosh@uantwerp.be, alireza.kabgani@uantwerp.be, morteza.rahimi@uantwerp.be}            \\
              \and 
              S. Ghaderi,\at
              Department of Electrical Engineering (ESAT), KU Leuven, Leuven, Belgium\\
              \email{susan.ghaderi@kuleuven.be}\\
              The first author was partially supported by the Research Foundation Flanders (FWO) research project G081222N and UA BOF DocPRO4 projects with ID 46929 and 48996. SG also acknowledges the support of the Research Foundation Flanders (FWO) research project 12AK924N.
}

\date{Received: date / Accepted: date}

\maketitle
\vspace{-5mm}
\begin{abstract}
This paper introduces a generalized descent algorithm (DEAL) for minimizing smooth nonconvex functions. If the objective function is nonsmooth, a smoothing technique (e.g., forward-backward and high-order Moreau envelopes) is applied to generate a smooth counterpart. The proposed framework unifies several methods, such as gradient-based methods with constant step-sizes and Armijo line search, and several proximal splitting methods. The method is built around a generalized descent inequality that adapts the amount of decrease to the geometry of the objective function. Under the Kurdyka-{\L}ojasiewicz (KL) property, we establish global convergence of the generated sequence to critical points and provide a unified convergence rate analysis. In particular, we show that the convergence behavior depends jointly on the KL exponent and the descent order, and we identify a precise condition under which generalized descent methods achieve linear convergence. By choosing the order of high-order proximal regularization according to the KL exponent, our boosted high-order proximal-point method achieves linear convergence for arbitrary KL exponents.
If the objective function satisfies a global KL inequality, we further strengthen the results by proving convergence to global minimizers and deriving explicit iteration-complexity bounds. Numerical experiments validate our theoretical foundation.

\keywords{Nonsmooth and nonconvex optimization \and Smoothing techniques \and Generalized descent methods \and Kurdyka-{\L}ojasiewicz functions \and Convergence rate \and Complexity analysis.}
\end{abstract}
${}$\\
Communicated by Felipe Lara Obreque.
\vspace{-4mm}
\section{Introduction} \label{sec:introduction}
Let us consider the unconstrained optimization problem
\begin{equation}\label{eq:prob}
    \min_{x\in\R^n}~f(x),
\end{equation}
under the following standing assumption:
\begin{ass}[Basic assumptions]\label{ass:basic} For problem \eqref{eq:prob}, we assume:
\begin{description}
\item[(a)] The function $\func{f}{\R^n}{\R}$ is continuously differentiable, i.e., $f\in\C^1$, and possibly nonconvex;
\item[(b)] The set of minimizers $\mathcal{X}^*:=\argmin_{x\in \R^n} f(x)$ is nonempty, with $f^*$ denoting the optimal value.
\end{description}
\end{ass}
Despite the lack of convexity, a wide class of descent algorithms is known to exhibit favorable convergence behavior in practice and often converge to \textit{global minimizers}; see, e.g., \cite{attouch2013convergence,karimi2016linear}. 
Nevertheless, the theoretical foundations of such methods remain incomplete, particularly regarding \textit{global convergence} guarantees, \textit{convergence rates}, and, most notably, \textit{iteration complexity} under nonconvexity.

A seminal contribution in this direction was made by Attouch, Bolte, and Svaiter \cite{attouch2013convergence}, who established abstract convergence of descent methods to critical points under two general conditions: a \textit{sufficient decrease} condition and a \textit{relative error} condition. Subsequent works, including \cite{bento2025convergence,frankel2015splitting}, refined this framework and obtained linear convergence rates in certain settings. These analyses typically rely on descent inequalities of the form
\begin{equation}\label{eq:sufdeccond}
f(x^{k+1})+a\Vert x^{k+1}-x^k\Vert ^2\leq f(x^k), \quad a>0.
\end{equation}

In classical smooth optimization methods such as steepest descent, conjugate gradient, Newton, and quasi-Newton schemes, inequality \eqref{eq:sufdeccond} follows naturally from updates of the form $x^{k+1}=x^k + \alpha \ov d^k$, where $\alpha>0$ and $\ov d^k$ is a \textit{sufficient descent direction} \cite{nocedal2006numerical}. Recall that a direction $\ov d^k$ is a {\it descent direction} at $x^k$ if
\begin{equation*}\label{eq:desDir}
    f'(x^k;\ov d^k):=\mathop{\lim}\limits_{t\downarrow 0}\tfrac{f(x^{k}+t \ov d^k)-f(x^{k})}{t}=\big\langle\nabla f(x^k),\ov d^k\big\rangle<0,
\end{equation*}
and a \textit{sufficient descent direction} if there exist constants $c_1,c_2>0$ such that
\begin{equation}\label{eq:sufDescCond}
    \big\langle\nabla f(x^k),\ov d^k\big\rangle\leq -c_1\|\nabla f(x^k)\|^2, \quad \|\ov d^k\|\leq c_2 \|\nabla f(x^k)\|.
\end{equation}
A combination of \eqref{eq:sufdeccond} and \eqref{eq:sufDescCond} yields the classical descent inequality
\begin{equation}\label{eq:generalstr:2}
f(x^{k+1})\leq f(x^k)- \rho\Vert \nabla f(x^k)\Vert ^2,\quad \rho>0.
\end{equation}
While inequality \eqref{eq:generalstr:2} covers a broad family of descent schemes, recent developments 
indicate that many relevant algorithms exhibit non-quadratic decrease behavior. In particular, 
H\"olderian gradient methods \cite{yashtini2016global} and high-order proximal and smoothing approaches \cite{Kabganitechadaptive,Kabganidiff} naturally lead to more general descent relations of the form
\begin{equation}\label{eq:generalstr:p}
f(x^{k+1})\leq f(x^k)- \rho\Vert \nabla f(x^k)\Vert ^\theta,\quad \rho>0, ~~\theta>1.
\end{equation}
Such relations may hold even when the underlying search directions do not satisfy the classical sufficient descent condition \eqref{eq:sufDescCond}. 
Existing convergence frameworks do not fully accommodate this generalized descent structure, thereby motivating the introduction of \textit{generalized (sufficient) descent directions} (Definition~\ref{def:G-descent-dir})
and the development of a unified analytical framework capable of handling
\eqref{eq:generalstr:p}

From a theoretical standpoint, analyzing descent mechanisms of the form
\eqref{eq:generalstr:p} requires a geometric condition that directly links
function values to gradient norms and remains robust under non-quadratic
decrease regimes. The Kurdyka-{\L}ojasiewicz  (KL) framework provides
a natural and sufficiently general tool for studying the convergence behavior
of generalized descent methods beyond the classical quadratic decrease regime. In particular, for methods satisfying \eqref{eq:generalstr:2}, linear convergence is obtained when the KL exponent satisfies $\vartheta=\tfrac{1}{2}$.
A central result of this paper shows that, under the generalized descent condition \eqref{eq:generalstr:p}, linear convergence holds whenever $\vartheta=\tfrac{1}{\theta}$.
 This characterization establishes a precise relationship between the descent order and the KL geometry of the objective function. It further implies that, by appropriately selecting the descent order $\theta$, linear convergence can be achieved for \textit{any} KL exponent $\vartheta\in(0,1)$. High-order proximal methods \cite{ahookhosh2025asymptotic,Kabganitechadaptive,Kabganidiff}, in which the regularization involves a power $p>1$, naturally fall within this setting, enabling linear convergence across the full spectrum of KL exponents.

Although several works \cite{attouch2013convergence,bento2025convergence,frankel2015splitting,Kabganitechadaptive,Kabganidiff} have established convergence and, in some cases, linear rates under KL-type assumptions, iteration complexity has received comparatively little attention. In this paper, we address this gap by deriving explicit complexity bounds under the global KL inequality.
 This assumption, which holds for broad classes of functions such as semialgebraic and globally subanalytic ones \cite{attouch2010proximal,bolte2006nonsmooth,bolte2007lojasiewicz,bolte2007clarke}, guarantees global and linear convergence to optimal solutions and enables precise complexity analysis. It also arises naturally in various applications including reinforcement learning \cite{agarwal2021theory,mei2020global,yuan2022general}, neural networks \cite{allen2019convergence,zeng2018global}, optimal control \cite{bu2019lqr,fatkhullin2021optimizing}, and stochastic optimization \cite{fatkhullin2022sharp}.

The goal of this work is to introduce a \textit{generalized descent algorithm} (DEAL) that unifies a variety of descent schemes, including the constant step-size descent methods, Armijo line search, proximal gradient algorithms, and high-order proximal-point methods within a single theoretical framework. 
We establish global convergence and linear rates, and explicit iteration complexity bounds, thereby providing a comprehensive theoretical foundation for generalized descent methods under KL geometry.
Numerical experiments are provided on representative problems to illustrate the theoretical findings.

\vspace{-5mm}
\subsection{{\bf Contribution}} \label{sec:contribution}
Our main contributions are summarized as follows:
\begin{description}
  \item[{\bf (i)}] {\bf Generalized descent framework.}
We introduce a generalized descent framework, named \emph{DEAL}, generating the sequences $\seq{x^k}$ satisfying the inequality \eqref{eq:generalstr:p}. This framework encompasses a broad class of classical and modern algorithms, including but not limited to the constant step-size gradient descent for $f\in\C^{1,\nu}_L$ with $\nu\in(0,1]$, Armijo-type line search methods, proximal-gradient schemes, and (boosted) high-order proximal-point algorithms. In doing so, we extend the classical notion of sufficient descent directions to the more flexible concept of \textit{generalized sufficient descent directions}, which enables algorithmic design beyond the quadratic descent regime.

\item[{\bf (ii)}] {\bf Convergence analysis under KL property.}
Under the Kurdyka-{\L}ojasiewicz (KL) property of the objective function, we establish global convergence of the DEAL framework and provide a unified convergence rate analysis. We show that the rate depends jointly on the KL exponent $\vartheta$ and the descent order $\theta$ given in \eqref{eq:generalstr:p}, and we establish linear convergence whenever $\vartheta=\tfrac{1}{\theta}$. This reveals a precise geometric relationship between the objective landscape and the design of descent mechanisms. As a notable consequence, we show that boosted high-order proximal-point methods achieve \textit{global linear convergence for arbitrary KL exponents}.

\item[{\bf (iii)}] {\bf Complexity under global KL property.}
By imposing the global KL property, we strengthen convergence guarantees to global optimality and derive explicit iteration complexity bounds. These bounds quantify the number of iterations required to reach prescribed accuracies in function values, gradient norms, and iterates and apply uniformly across all algorithmic instances covered by the DEAL framework. This provides, to the best of our knowledge, one of the first comprehensive complexity analyses for generalized descent methods under KL geometry.

\end{description}

\vspace{-5mm}
\subsection{{\bf Organization}} \label{sec:organization}
\vspace{-1mm}
The paper is organized as follows. 
Section~\ref{sec:prelim} introduces the notation and preliminary results. 
Section~\ref{sec:genericDescMethod} presents the generalized descent framework and establishes convergence, rate, and complexity results under KL assumptions.
Section~\ref{sec:InstancesDEAL} discusses generalized descent directions and shows how several classical and modern algorithms arise as instances of the proposed framework.
Preliminary numerical experiments are reported in Section~\ref{sec:preNumExper}, and concluding remarks are given in Section~\ref{sec:conclusion}.

\section{Preliminaries} \label{sec:prelim}

In this paper, $\R^n$ denotes the $n$-dimensional real \textit{Euclidean space} equipped with the standard \textit{inner product} $\innprod{\cdot}{\cdot}$ and the associated \textit{Euclidean norm} $\|\cdot\|=\sqrt{\innprod{\cdot}{\cdot}}$.
The notation $\N$ stands for the set of \textit{natural numbers}, and $\N_{0}:=\N\cup \{0\}$.
The \textit{open ball} of radius $r>0$ centered at $x\in\R^n$ is denoted by $B(x;r)$.
The \textit{interior} of a set $S\subseteq\R^n$ is denoted by $\interior\, S$.
The \textit{Euclidean distance} from a point $x\in\R^n$ to a nonempty set $S\subseteq\R^n$ is given by $\dist(x;S)=\inf_{z\in S} \|z-x\|$.
The \textit{ceiling function}, denoted by $\lceil x \rceil$, assigns to each real number $x$ the smallest integer greater than or equal to it.
A point $\ov{x}$ is called a \textit{cluster point} of a sequence $\{x^k\}_{k \in \N_0}$ if there exists a subsequence converging to $\ov{x}$.
For a function $h: \R^n \to \Rinf:=\R\cup\{+\infty\}$, the \textit{effective domain} is defined as 
$\dom{(h)}:= \{x \in   \R^{n}\mid~h(x)< + \infty \}$,
and $h$ is called \textit{proper} if $\dom(h)\neq \emptyset$.
A function $h$ is \textit{lower semicontinuous} (lsc) at $\ov{x} \in \R^n$ if, for any sequence 
$\{x^k\}_{k\in \mathbb{N}} \subseteq \R^n$ with $x^k \to \ov{x}$, it follows that
$\liminf_{k \to +\infty} h(x^k) \geq h(\ov{x})$.

A proper function $h: \R^n \to \Rinf$ is said to be \textit{Fr\'{e}chet differentiable} at $\ov{x}\in \idom(h)$ with \textit{Fr\'{e}chet derivative} $\nabla h(\ov{x})$ if
\[
\lim_{x\to \ov{x}}\tfrac{h(x) -h(\ov{x}) - \langle \nabla h(\ov{x}) , x - \ov{x}\rangle}{\Vert x - \ov{x}\Vert}=0.
\]
The point $\ov{x}\in \idom(h)$ is called a \textit{critical point} of $h$ if $\nabla h(\ov x)=0$.

The \textit{Fr\'{e}chet/regular} and \textit{Mordukhovich/limiting subdifferentials} of $h$ at $\ov{x}\in \dom{(h)}$ \cite{Mordukhovich2018,rockafellar2011variational} are defined, respectively, as
\[
\widehat{\partial}h(\ov{x}):=\left\{\zeta\in \R^n\mid~\liminf_{x\to \ov{x}}\tfrac{h(x)- h(\ov{x}) - \langle \zeta, x - \ov{x}\rangle}{\Vert x - \ov{x}\Vert}\geq 0\right\},
\]
and
\[
\partial h(\ov{x}):=\left\{\zeta\in \R^n\mid~\exists x^k\to \ov{x}, \zeta^k\in \widehat{\partial}h(x^k),~~\text{with}~~h(x^k)\to h(\ov{x})~\text{and}~ \zeta^k\to \zeta\right\}.
\]
If $\func{h}{\R^n}{\R}$ is continuously differentiable, i.e., $h\in \C^1$, then
$\partial h(\ov{x})=\{\nabla h(\ov{x})\}$ \cite[page 20]{Mordukhovich2018}.

A function $h\in \C^1$ is said to have $\nu$-\textit{H\"older continuous gradient} with $\nu\in (0,1]$ if there exists a constant $L>0$ such that
\begin{align*}
    \|\nabla h(x)-\nabla h(y)\| \leq L\|x-y\|^\nu, \quad \forall x,y\in\R^n,
\end{align*}
which implies the following \textit{H\"olderian descent lemma}
\begin{equation}\label{eq:HDescentLemma}
    \left\vert h(x)-h(y)-\innprod{\nabla h(y)}{x-y}\right\vert \leq \tfrac{L}{1+\nu} \|x-y\|^{1+\nu},\quad \forall x,y\in\R^n,
\end{equation}
see, e.g., \cite{nesterov2018lectures,yashtini2016global}. The class of such functions is denoted by $\C^{1,\nu}_{L}(\R^n)$. Furthermore, a function $f$ that meets this condition is called {\it weakly smooth}; see, e.g., \cite{ahookhosh2019accelerated,nesterov2015universal}.

We now introduce the class of $\varrho$-\textit{weakly convex} functions \cite{nurminskii1973quasigradient}.
\begin{defin}[Weak convexity]\label{def:weakconvex}
A proper function $h: \R^n\to \Rinf$ is said to be $\varrho$-\textit{weakly convex} for $\varrho>0$ if 
$h+\tfrac{\varrho}{2}\Vert \cdot\Vert^2$ is convex.
\end{defin}

The following fact, which is a special case of \cite[Lemma 1]{aragon2018accelerating}, serves as a key tool for establishing linear convergence in the subsequent sections.
\begin{fact}[\textbf{Convergence of a sequence with positive elements}]\cite[Lemma 1]{aragon2018accelerating}
\label{lem:convRate1} 
    Let $\seq{s^k}$ be a sequence in $(0, +\infty)$ converging to $0$, and let $\varsigma\ge 1$ and $\omega>0$. Suppose that for some $k_0\in \N_0$,
    \begin{equation}\label{eq:sk}
        (s^k)^{\varsigma}\leq\omega(s^k-s^{k+1}),\quad\forall k\ge k_0.
    \end{equation}
    Then the following assertions hold:
    \begin{description}
        \item[(a)] If $\varsigma=1$, then necessarily $\omega>1$, and the sequence $\seq{s^k}$ converges linearly to $0$ with rate $1-\tfrac{1}{\omega}$, i.e.,
        \begin{equation}\label{eq:sk1}
        s^{k+1} \le \big(1-\tfrac{1}{\omega}\big) s^{k}, \quad \forall k\ge k_0;
        \end{equation}
        \item[(b)] If $\varsigma>1$, then there exists a constant $\mu>0$ such that 
        \[
            0\leq s^k\leq \mu k^{-\tfrac{1}{\varsigma-1}},
        \]
        for all sufficiently large $k\in\N_{0}$.
    \end{description}
\end{fact}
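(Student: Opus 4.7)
The plan is to handle the two cases separately via direct manipulation of the recursion \eqref{eq:sk}. For part (a), substituting $\varsigma=1$ into \eqref{eq:sk} yields $s^k\leq \kappa(s^k-s^{k+1})$, which rearranges immediately to $s^{k+1}\leq (1-1/\kappa)s^k$. Since $s^{k+1}>0$ by hypothesis, the contraction factor must be strictly positive, which forces $\kappa>1$ and delivers the linear rate \eqref{eq:sk1}.

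For part (b) with $\varsigma>1$, the strategy is the standard device of telescoping the transformed quantity $(s^k)^{1-\varsigma}$. From \eqref{eq:sk}, I first rewrite the bound as $s^k-s^{k+1}\geq (s^k)^{\varsigma}/\kappa$, which in particular shows that $s^{k+1}\leq s^k$. Using that $t\mapsto t^{-\varsigma}$ is decreasing on $(0,+\infty)$, and that $s^{k+1}\leq t\leq s^k$ on the relevant interval, I obtain the chain
\[
\frac{1}{\kappa}\leq (s^k)^{-\varsigma}(s^k-s^{k+1})\leq \int_{s^{k+1}}^{s^k} t^{-\varsigma}\, dt= \frac{(s^{k+1})^{1-\varsigma}-(s^k)^{1-\varsigma}}{\varsigma-1},
\]
which furnishes the per-step increment $(s^{k+1})^{1-\varsigma}-(s^k)^{1-\varsigma}\geq (\varsigma-1)/\kappa$.

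Telescoping this bound from $k_0$ to $k$ then gives $(s^k)^{1-\varsigma}\geq (s^{k_0})^{1-\varsigma}+(k-k_0)(\varsigma-1)/\kappa$, and inverting yields
\[
s^k\leq \Bigl((s^{k_0})^{1-\varsigma}+(k-k_0)(\varsigma-1)/\kappa\Bigr)^{-1/(\varsigma-1)}.
\]
For $k$ sufficiently large, the dominant term inside the bracket is the one proportional to $k$, so the estimate collapses to $s^k\leq \mu k^{-1/(\varsigma-1)}$ for a suitable constant $\mu>0$ depending only on $\varsigma$ and $\kappa$.

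The only delicate step is the direction of the integral comparison, which rests squarely on the monotonicity of $t^{-\varsigma}$ together with the monotonicity $s^{k+1}\leq s^k$ that one reads off from \eqref{eq:sk}; everything else reduces to routine algebra. I expect the $\varsigma=1$ case to be essentially immediate, and the main work to lie in making the integral sandwich inequality tight enough to produce the stated sublinear exponent $1/(\varsigma-1)$.
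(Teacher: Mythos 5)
Your proof is correct. Note that the paper does not prove this statement at all: it is quoted as a Fact from \cite[Lemma 1]{aragon2018accelerating}, and the only in-text justification given is the remark following it, which observes exactly what your part (a) shows — that setting $\varsigma=1$ in \eqref{eq:sk} and rearranging gives \eqref{eq:sk1}, and that $s^{k+1}>0$ then forces $\kappa>1$. So your part (a) coincides with the paper's own (brief) reasoning. Your part (b) supplies the argument the paper delegates to the reference, and it is the standard one: from \eqref{eq:sk} one gets $s^{k+1}\le s^k$ and $(s^k)^{-\varsigma}(s^k-s^{k+1})\ge 1/\kappa$, the integral comparison with the decreasing map $t\mapsto t^{-\varsigma}$ yields the per-step increment $(s^{k+1})^{1-\varsigma}-(s^k)^{1-\varsigma}\ge(\varsigma-1)/\kappa$, and telescoping from $k_0$ plus inversion gives $s^k\le\bigl((s^{k_0})^{1-\varsigma}+(k-k_0)(\varsigma-1)/\kappa\bigr)^{-1/(\varsigma-1)}$, which is $O\bigl(k^{-1/(\varsigma-1)}\bigr)$; for instance, for $k\ge 2k_0$ one may take $\mu=\bigl(2\kappa/(\varsigma-1)\bigr)^{1/(\varsigma-1)}$. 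All steps are valid (positivity of the $s^k$ keeps the integral and the inversion legitimate, and the assumption $s^k\to 0$ is not even needed), so the proposal is a complete and correct proof of the cited fact.
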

The following remark clarifies the connection with the original statement in \cite[Lemma 1]{aragon2018accelerating}.
\begin{rem}\label{rem:lem:convRate1}
    In \cite[Lemma 1]{aragon2018accelerating}, the condition \eqref{eq:sk} is assumed to hold for all sufficiently large $k$. Since we are using a special case of that lemma, it suffices to require \eqref{eq:sk} only for all $k\geq k_0$ for some $k_0\in \N_0$. In fact, inequality \eqref{eq:sk1} is obtained directly by setting $\varsigma=1$ in \eqref{eq:sk}. Moreover, since $s^{k+1}>0$, the inequality \eqref{eq:sk1} immediately implies that $\omega>1$.\qed
\end{rem}

\subsection{{\bf Class of Kurdyka-{\L}ojasiewicz Functions}} \label{sec:KLFunc}
The establishment of global convergence and linear convergence rates for a broad class of optimization algorithms crucially relies on the Kurdyka-\L{}ojasiewicz property of the objective function (see, e.g., \cite{absil2005convergence,attouch2010proximal,bolte2007lojasiewicz,yu2022kurdyka}).
We recall the relevant definitions below.

\begin{defin}[\textbf{Kurdyka-{\L}ojasiewicz inequality}] \label{def:LojaIneqDef}
    Let $\func{h}{\R^n}{\R}$ be a lsc function.
    \begin{enumerate}
        \item\label{def:LojaIneqDef-l}
             We say that $h$ satisfies the \textit{Kurdyka-{\L}ojasiewicz} (KL) inequality at $\ov x \in \R^n$ if there exist constants $r, \xi, \tau>0$ and an exponent $\vartheta \in (0,1)$ such that
            \begin{equation}\label{eq:LojaIneqGrad}
                \big(h(x)-h(\ov x)\big)^\vartheta\leq \tau~\dist(0;\partial h(x)),
            \end{equation}
            whenever $x\in B(\ov x;r)$ and $h(\ov x)<h(x) <h(\ov x)+\xi$.
        \item\label{def:LojaIneqDef-g}
             Let $h^*:=\inf_{x\in \R^n} h(x)>-\infty$. The function $h$ is said to satisfy the \textit{global Kurdyka-{\L}ojasiewicz (global KL) inequality} if there exist constants $\tau>0$ and an exponent $\vartheta \in (0,1)$ such that
             \begin{equation}\label{eq:LojaIneqGrad:g}
                \big(h(x)-h^*\big)^\vartheta\leq \tau~\dist(0;\partial h(x)),\quad\forall x\in \R^n.
            \end{equation}
    \end{enumerate}
    The function $h$ is said to be a \textit{KL function} at $\ov{x}$ if it satisfies the KL inequality at $\ov{x}$.
Moreover, $h$ is called a \textit{global KL function} if it satisfies the global KL inequality.
\end{defin}

The inequality \eqref{eq:LojaIneqGrad} holds for a broad class of functions. {\L}ojasiewicz proved it for analytic functions \cite{lojasiewicz1963propriete,lojasiewicz1993geometrie}, and Kurdyka later extended it to differentiable functions definable in an $o$-minimal structure \cite{kurdyka1998gradients}.
The global variant \eqref{eq:LojaIneqGrad:g} with $\vartheta=\tfrac{1}{2}$ is known as the Polyak–{\L}ojasiewicz (PL) inequality \cite{Apidopoulos2022convergence,fatkhullin2022sharp}. More recently, Garrigos et al.~\cite{garrigos2023convergence} introduced the global $p$–{\L}ojasiewicz inequality with $\vartheta=1-\tfrac{1}{p}$ and showed its equivalence to the PL inequality on bounded neighborhoods of the optimal solution set.

In this paper, we use the (global) KL inequality to unify the convergence analysis of generalized descent methods. For $h\in \mathcal{C}^1$, we have 
\[
\dist(0;\partial h(x))=\Vert\nabla h(x)\Vert,
\]
which connects \eqref{eq:LojaIneqGrad} and \eqref{eq:LojaIneqGrad:g} to smooth optimization. This property underlies our results on global convergence, convergence rates, and complexity bounds (see Section~\ref{sec:genericDescMethod}).

\section{Generalized Descent Methods: Unified Framework} \label{sec:genericDescMethod}
In this section, we introduce a generalized descent algorithm (DEAL), presented in Algorithm~\ref{alg:deal}, for solving problem~\eqref{eq:prob} under Assumption~\ref{ass:basic}. The DEAL framework is highly flexible: it unifies well-established methods for smooth optimization, such as gradient-based algorithms, and extends naturally to nonsmooth settings through smoothing techniques, including proximal-point methods. Section~\ref{sec:InstancesDEAL} illustrates this versatility through specific instances. In particular, Examples~\ref{ex:PPM} and \ref{ex:FBM}, together with Subsection~\ref{sub:fben}, show how smoothing techniques integrate with the DEAL framework. Notably, Subsection~\ref{sub:fben} further develops high-order proximal-point methods as instances of DEAL, achieving linear convergence for all KL exponents $\vartheta \in (0,1)$.
\RestyleAlgo{boxruled}
\begin{algorithm}
\DontPrintSemicolon
\textbf{Initialization:} $x^0\in\R^n$,~$\rho>0$,~$\theta>1$,~ $k=0$;

    \Repeat{the stopping criteria holds
    }{
    Generate $x^{k+1}$ satisfying
    \begin{equation}\label{eq:deal}
    f(x^{k+1})\leq f(x^{k})-\rho\Vert \nabla f(x^{k})\Vert^{\theta};
    \end{equation}
    \\Set $k=k+1$;
    }
\caption{DEAL (Generalized DEscent ALgorithm) \label{alg:deal}}
\end{algorithm}

Algorithm~\ref{alg:deal} is presented in an abstract form, and several instance procedures for generating $x^{k+1}$ are discussed in Section~\ref{sec:InstancesDEAL}.
For some convergence results, we require the sequence $\seq{x^k}$ generated by Algorithm~\ref{alg:deal} to satisfy structural conditions; see Theorem~\ref{thm:GlobConv-s}~\ref{thm:GlobConv-s:3}–\ref{thm:GlobConv-s:4}. One such condition is the existence of a convergent subsequence, also known as the \textit{continuity condition} \cite{attouch2013convergence,bento2025convergence}. While not essential for all results, we formalize it below as an assumption.
\begin{ass}[Convergent subsequence]\label{ass:bo}
The sequence $\seq{x^k}$ generated by Algorithm \ref{alg:deal} has a convergent subsequence.
\end{ass}

Within the DEAL framework, we impose $\theta>1$, a condition motivated by theoretical requirements that guarantees stability and descent properties. Moreover, when the objective function of problem~\eqref{eq:prob} satisfies the KL inequality at a critical point with KL exponent $\vartheta$, our convergence analysis imposes additional restrictions on the parameter $\theta$. These restrictions reveal a fundamental link between $\vartheta$ and $\theta$, as discussed below.

We now examine the global convergence of DEAL, establishing that subsequences converge to critical points and, under additional conditions, the entire sequence converges.

\begin{thm}[Global convergence of DEAL] \label{thm:GlobConv-s}
Let the sequence $\seq{x^k}$ be generated by Algorithm \ref{alg:deal}. The following assertions hold:
    \begin{enumerate}
        \item\label{thm:GlobConv-s:1} 
            The sequence of function values $\seq{f(x^k)}$ is non-increasing;
        \item\label{thm:GlobConv-s:2}
            We have $\sum_{k=0}^\infty\Vert \nabla f(x^k)\Vert^{\theta}< \infty$ and $\displaystyle\lim_{k\to\infty} \|\nabla f(x^k)\|=0$;
        \item\label{thm:GlobConv-s:3}
            If Assumption~\ref{ass:bo} holds, every cluster point $\ov x$ of $\seq{x^k}$ is a critical point, and $\displaystyle \lim_{k\to\infty}f(x^k)= f(\ov x)$;
        \item\label{thm:GlobConv-s:4}
             If Assumption~\ref{ass:bo} holds, and there exists $c>0$ such that
            \[\|x^{k+1} - x^k\| \leq c \|\nabla f(x^k)\|^{\theta - 1},\]
            for all sufficiently large $k \in \N_{0}$, 
            and if the KL inequality holds at a cluster point $\ov{x}$ of $\seq{x^k}$, then $x^k\to \ov x$.
    \end{enumerate}
\end{thm}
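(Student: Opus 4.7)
The plan is to prove the four parts sequentially. Parts (a)–(c) are essentially immediate from the descent inequality \eqref{eq:deal}, the summability it produces, and the continuity of $\nabla f$; the main technical work is concentrated in (d), which requires the classical Kurdyka–{\L}ojasiewicz concavity argument together with the additional step-length bound.

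For (a), since $\rho>0$ and $\Vert\nabla f(x^k)\Vert^{\theta}\geq 0$, inequality \eqref{eq:deal} directly yields $f(x^{k+1})\leq f(x^k)$. For (b), I would telescope \eqref{eq:deal} over $k=0,\ldots,N$ and invoke $f^{*}>-\infty$ from Assumption~\ref{ass:basic}(b) to obtain $\rho\sum_{k=0}^{N}\Vert\nabla f(x^k)\Vert^{\theta}\leq f(x^0)-f^{*}$; letting $N\to\infty$ proves both claims. For (c), take a subsequence $x^{k_j}\to\ov x$ (Assumption~\ref{ass:bo}); continuity of $\nabla f$ combined with $\Vert\nabla f(x^k)\Vert\to 0$ from (b) gives $\nabla f(\ov x)=0$, while the non-increasing sequence $\seq{f(x^k)}$ must share its subsequential limit $f(\ov x)$ obtained from continuity of $f$.

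For (d), I would first dispose of the degenerate case: if $\nabla f(x^k)=0$ for some $k$, then the assumed bound $\Vert x^{k+1}-x^k\Vert\leq c\Vert\nabla f(x^k)\Vert^{\theta-1}$ together with $\theta>1$ forces $x^{k+1}=x^k$, and \eqref{eq:deal} keeps the gradient zero thereafter, so the sequence stabilizes trivially. Otherwise, by (c), $r^k:=f(x^k)-f(\ov x)>0$ for all large $k$, and I combine the concave desingularizing function $\varphi(s):=\tfrac{\tau}{1-\vartheta}s^{1-\vartheta}$ with the KL inequality \eqref{eq:LojaIneqGrad}, the descent \eqref{eq:deal}, and the step-length bound. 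Concavity gives $\varphi(r^k)-\varphi(r^{k+1})\geq\varphi'(r^k)(r^k-r^{k+1})$; the KL inequality yields $\varphi'(r^k)=\tau(r^k)^{-\vartheta}\geq\Vert\nabla f(x^k)\Vert^{-1}$; and \eqref{eq:deal} provides $r^k-r^{k+1}\geq\rho\Vert\nabla f(x^k)\Vert^{\theta}$. Chaining these gives
\[
\varphi(r^k)-\varphi(r^{k+1})\;\geq\;\rho\Vert\nabla f(x^k)\Vert^{\theta-1}\;\geq\;(\rho/c)\Vert x^{k+1}-x^k\Vert.
\]
Summing over $k\geq k_0$, the telescoping left-hand side bounds $\sum_{k\geq k_0}\Vert x^{k+1}-x^k\Vert$ by a finite constant, so $\seq{x^k}$ is Cauchy; combined with $x^{k_j}\to\ov x$, this forces $x^k\to\ov x$.

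The main obstacle is the standard but delicate bookkeeping required to keep the iterates inside the KL neighborhood $B(\ov x;r)$ and inside the level strip $\{x\colon f(\ov x)<f(x)<f(\ov x)+\xi\}$ on which \eqref{eq:LojaIneqGrad} is valid, since the displayed inequality only becomes meaningful once \eqref{eq:LojaIneqGrad} is in force. I would handle this by an induction: choose $k_0$ large enough that $\Vert x^{k_0}-\ov x\Vert<r/2$, $r^{k_0}<\xi$, and $(c/\rho)\varphi(r^{k_0})<r/2$, which is feasible because $x^{k_j}\to\ov x$ and $r^k\to 0$ by (c) with $\varphi$ continuous at $0$. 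A triangle-inequality induction using the displayed bound then confirms $\Vert x^{k+1}-\ov x\Vert<r$ for all $k\geq k_0$, so the KL inequality remains applicable throughout the tail, the summability of step-lengths extends globally, and the convergence $x^k\to\ov x$ follows.
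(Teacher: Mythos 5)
Your proof is correct and follows essentially the same route as the paper: parts (a)--(c) by telescoping the descent inequality \eqref{eq:deal}, and part (d) by the concave desingularization $s\mapsto \tfrac{\tau}{1-\vartheta}s^{1-\vartheta}$ combined with the KL inequality and the bound $\|x^{k+1}-x^k\|\le c\|\nabla f(x^k)\|^{\theta-1}$, yielding summable step lengths and hence a Cauchy sequence converging to $\overline{x}$. If anything, your explicit induction keeping the iterates in $B(\overline{x};r)$ (via $\|x^{k_0}-\overline{x}\|<r/2$ and $(c/\rho)\varphi(r^{k_0})<r/2$) is more careful than the paper's brief appeal to $\|x^{k+1}-x^k\|\to 0$ at the corresponding step.
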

\begin{proof}
Assertion~\ref{thm:GlobConv-s:1} follows directly from \eqref{eq:deal} and $\rho>0$.
   For Assertion~\ref{thm:GlobConv-s:2}, summing both sides of \eqref{eq:deal} from $k=0$ to $k=N$ yields
   \begin{equation}\label{eq:thm:GlobConv-s}
       \rho\sum_{k=0}^N\Vert \nabla f(x^k)\Vert^{\theta}\leq f(x^0)-f(x^{N+1})\le f(x^0)- f^*,
   \end{equation}
    which implies $\sum_{k=0}^\infty\Vert \nabla f(x^k)\Vert^{\theta}< \infty$ and $\displaystyle\lim_{k\to\infty} \|\nabla f(x^k)\|=0$, verifying the desired result.
For Assertion~\ref{thm:GlobConv-s:3}, let a subsequence $\subseq{x^{k_j}}$ converge to $\ov{x}$. 
Since $f\in \mathcal{C}^1$, the gradient is continuous, and thus $\nabla f(x^{k_j})\to \nabla f(\ov x)$.
Hence, by Assertion~\ref{thm:GlobConv-s:2}, $\|\nabla f(\ov x)\|=0$ and $\ov x$ is a critical point.  
Since $\seq{f(x^k)}$ is non-increasing and bounded below, it converges, and therefore $f(x^k)\to f(\ov x)$.  

Finally, for Assertion~\ref{thm:GlobConv-s:4}, let $\ov x$ be a cluster point where the KL inequality holds. 
Let $r,\xi,\tau>0$ be the constants associated with the KL inequality at $\ov x$ (cf. Definition~\ref{def:LojaIneqDef}). 
If $\nabla f(x^k)=0$ for some $k$, the result is trivial; otherwise, assume $\nabla f(x^k)\neq 0$ for all $k$.  
Since $\ov x$ is a cluster point, Assertion~\ref{thm:GlobConv-s:3} yields that $f(x^k)\to f(\ov x)$. Moreover, there exists $\ov k$ such that $x^{\ov k}\in\Gamma$, where
\[
\Gamma := \{x\in B(\ov x;r) : f(\ov x)< f(x)< f(\ov x)+\xi\}.
\]
Additionally, as $\|x^{k+1}-x^k\|\to 0$, we have $x^k\in\Gamma$ for all $k\geq \ov k$, possibly enlarging $\ov k$.  
Define $\Delta_k:=\left(f(x^k)-f(\ov x)\right)^{1-\vartheta}$ for $k\ge \ov k$.
By concavity of $\phi(t)=t^{1-\vartheta}$ on $(0, +\infty)$, together with \eqref{eq:deal} and the KL inequality, we obtain 
 \begin{align}\label{eq:th:conKL:z1-s}
  \Delta_{k}-\Delta_{k+1} \geq 
  (1-\vartheta)\left(f(x^{k})-f(\ov x)\right)^{-\vartheta}
\left[f(x^{k})-f(x^{k +1})\right]\geq 
  \tau^{-1}\rho (1-\vartheta)
\Vert\nabla f(x^{k})\Vert^{\theta-1},
\end{align}
i.e.,
\[
\Delta_{k}-\Delta_{k+1} \geq c^{-1}\tau^{-1}\rho (1-\vartheta) \|x^{k+1}-x^{k}\|.
\]
Summing both sides of this inequality over $k$ yields
\begin{align*}
\sum_{k=0}^{\infty}\Vert x^{k+1}-x^k\Vert
&\leq \sum_{k=0}^{\ov k -1}\Vert x^{k+1}-x^k\Vert + \sum_{k=\ov k}^{\infty}\Vert x^{k+1}-x^k\Vert
\le \sum_{k=0}^{\ov k -1}\Vert x^{k+1}-x^k\Vert+\tfrac{c\tau}{\rho(1-\vartheta)}\sum_{k=\ov k}^{\infty}\left(\Delta_k-\Delta_{k+1} \right)\\
&=\sum_{k=0}^{\ov k -1}\Vert x^{k+1}-x^k\Vert+\tfrac{c\tau}{\rho(1-\vartheta)}\Delta_{\ov k}<\infty.
\end{align*}
Thus, $\{x^k\}_{k\in \N_0}$ is a Cauchy sequence and therefore converges to $\ov x$.
    \qed
\end{proof}

For the generality and motivation of the condition $\Vert x^{k+1}-x^k\Vert \leq c\Vert \nabla f(x^k)\Vert^{\theta-1}$, used in Theorem~\ref{thm:GlobConv-s}~\ref{thm:GlobConv-s:4}, we refer to Remark~\ref{rem:cononiter}.

The following theorem establishes the linear convergence of DEAL under the KL inequality at a critical point $\ov x$ with exponent $\vartheta$, focusing on the function gap, the gradient norm, and the iterate distance.
Most existing analyses yield linear rates only for $\vartheta\in(0,\tfrac{1}{2}]$; see, e.g.,
\cite{ahookhosh2021bregman,attouch2010proximal,li2018calculus,themelis2018forward,yu2022kurdyka}, Theorem~\ref{thm:LinConvWeakDeal-s} extends this to all $\vartheta \in (0,1)$.

\begin{thm}[Linear convergence rate of DEAL] 
\label{thm:LinConvWeakDeal-s}
     Let the sequence $\seq{x^k}$ be generated by Algorithm~\ref{alg:deal} under Assumption~\ref{ass:bo}. Let us assume that there exists $c>0$ such that 
     \[\Vert x^{k+1}-x^k\Vert \leq c\Vert \nabla f(x^k)\Vert^{\theta-1},\]
     for all sufficiently large $k\in\N$. If $\ov x$ is a cluster point at which the KL inequality holds with exponent $\vartheta$ and constant $\tau>0$, then the following assertions hold:
    \begin{enumerate}
        \item\label{thm:LinConvWeakDealFun-s:0}
            If $\theta<\tfrac{1}{\vartheta}$, the sequences $\seq{f(x^k)-f(\ov x)}$, $\seq{\|\nabla f(x^k)\|}$, and $\seq{\|x^k-\ov x\|}$ become zero in finitely many iterations;
        \item\label{thm:LinConvWeakDealFun-s:1}
            If $\theta=\tfrac{1}{\vartheta}$, the sequences $\seq{f(x^k)-f(\ov x)}$, $\seq{\|\nabla f(x^k)\|}$, and $\seq{\|x^k-\ov x\|}$ converge linearly to $0$, i.e.,
        \begin{equation}\label{eq:convRateFun-s}
            f(x^{k+1})-f(\ov x)\leq q (f(x^k)-f(\ov x)),
        \end{equation}
        \begin{equation}\label{eq:LinIneqNablaDeal-s}
            \|\nabla f(x^k)\|\le \Big( \tfrac{f(x^0)-f(\ov x)}{\rho}\Big)^{\tfrac{1}{\theta}} q^{\tfrac{k}{\theta}},
        \end{equation}
        \begin{equation}\label{eq:LinIneqIteDealp-s}
        \|x^k-\ov x\| \le \tfrac{c}{1-q^{\tfrac{\theta-1}{\theta}}}\Big( \tfrac{f(x^0)-f(\ov x)}{\rho}\Big)^{\tfrac{\theta-1}{\theta}} q^{\tfrac{k(\theta-1)}{\theta}},
        \end{equation}
        as $k\to \infty$, where $q:= 1- \tfrac{\rho}{\tau^{\theta}}\in (0, 1)$;
        \item\label{thm:LinConvWeakDealFun-s:2}
            If $\theta>\tfrac{1}{\vartheta}$, there exists $\mu>0$ such that 
            \begin{equation}\label{eq:convRateFun2-s}
                0\leq f(x^k)-f(\ov x)\leq \mu k^{-\tfrac{1}{\vartheta \theta-1}},
            \end{equation}
            \begin{equation}\label{eq:SubLinIneqNablaDeal-s}
            \|\nabla f(x^k)\|\leq \big(\tfrac{\mu}{\rho}\big)^{\tfrac{1}{\theta}} k^{-\tfrac{1}{\theta(\vartheta \theta-1)}},
            \end{equation}
            \begin{equation}\label{eq:SubLinIneqItarDeal-s}
            \|x^k-\ov x\|\leq c  \tau\rho^{-1}(1-\vartheta)^{-1}\mu^{1-\vartheta} k^{-\tfrac{1-\vartheta}{\vartheta \theta-1}},
            \end{equation}
           for all sufficiently large $k\in\N_{0}$.
    \end{enumerate}
\end{thm}

\begin{proof}
By Theorem~\ref{thm:GlobConv-s}~\ref{thm:GlobConv-s:4}, $x^k\to \ov x$ and $f(x^k)\to f(\ov x)$.
From the KL inequality, there exists $k_0\in \N$ such that
\[
\big(f(x^{k}) -f(\ov x)\big)^{\vartheta}\le \tau \|\nabla f(x^k)\|, \quad\quad \forall k\ge k_0.
\]
Combining this with \eqref{eq:deal} gives
    \begin{equation}\label{eq:decIneqCSS1}
        \tfrac{\rho}{\tau^{\theta}} \big(f(x^{k}) -f(\ov x)\big)^{\vartheta\theta}\le \rho \|\nabla f(x^k)\|^{\theta} \le f(x^{k}) - f(x^{k+1}) = \big(f(x^{k}) -f(\ov x)\big) - \big(f(x^{k+1})-f(\ov x)\big),
    \end{equation}
    for all $k\ge k_0$.\\
    \ref{thm:LinConvWeakDealFun-s:0} By indirect proof, assume that $x^{k+1}\neq x^k$ for all $k\in \N_0$. It follows from \eqref{eq:decIneqCSS1} that
    \[
     \big(f(x^k)-f(\ov x)\big)^{1-\vartheta \theta}\geq \tfrac{\rho}{\tau^\theta}>0,
     \quad\quad \forall k\ge k_0,
    \]
    making a contradiction with $f(x^k)\to f(\ov x)$. Thus, all three sequences reach zero in a finite number of iterations.\\
    \ref{thm:LinConvWeakDealFun-s:1}-\ref{thm:LinConvWeakDealFun-s:2} Considering \eqref{eq:decIneqCSS1} and setting $s^k:=f(x^k)-f(\ov x)$, $\varsigma=\vartheta\theta$, and $\omega:= \tfrac{\tau^{\theta}}{\rho}$, we obtain a recurrence of the form \eqref{eq:sk}. 
    Applying Fact~\ref{lem:convRate1} and Remark~\ref{rem:lem:convRate1} yields $\omega >1$, $q\in (0,1)$, and the rates in \eqref{eq:convRateFun-s} and \eqref{eq:convRateFun2-s}.  Moreover, from \eqref{eq:deal}, 
    \begin{equation*}
        \rho \|\nabla f(x^k)\|^\theta \le f(x^{k})-f(x^{k+1}) \le f(x^k) -f(\ov x),
    \end{equation*}
   which, together with \eqref{eq:convRateFun-s} and \eqref{eq:convRateFun2-s}, implies \eqref{eq:LinIneqNablaDeal-s} and \eqref{eq:SubLinIneqNablaDeal-s}. To prove \eqref{eq:LinIneqIteDealp-s}, using 
    $\Vert x^{k+1}-x^k\Vert \leq c\Vert \nabla f(x^k)\Vert^{\theta-1}$ and \eqref{eq:LinIneqNablaDeal-s}, we obtain
    \begin{align*}
        \|x^k - \ov x\| &\le \sum_{i\ge k} \|x^i -x^{i+1}\|\le \sum_{i\ge k} c\|\nabla f(x^i)\|^{\theta-1}
        \le c\Big( \tfrac{f(x^0)-f(\ov x)}{\rho}\Big)^{\tfrac{\theta-1}{\theta}} \sum_{i\ge k}   q^{\tfrac{i(\theta -1)}{\theta}}
        = c\Big( \tfrac{f(x^0)-f(\ov x)}{\rho}\Big)^{\tfrac{\theta-1}{\theta}}  \tfrac{q^{\tfrac{k(\theta-1)}{\theta}}}{1-q^{\tfrac{\theta -1}{\theta}}},
    \end{align*}
    for all $k\ge k_0$.
Let us set $\theta>\tfrac{1}{\vartheta}$. 
Arguing as in the proof of Theorem~\ref{thm:GlobConv-s}~\ref{thm:GlobConv-s:4} (see in particular the derivation of
\eqref{eq:th:conKL:z1-s}), we obtain
\begin{equation}\label{eq:thm:LinConvWeakDealFun-s:1:a}
    \Delta_{k}-\Delta_{k+1} \geq \tau^{-1}\rho (1-\vartheta)\Vert\nabla f(x^{k})\Vert^{\theta-1},
\end{equation}
where $\Delta_k:=\left(f(x^k)-f(\ov x)\right)^{1-\vartheta}$. 
Moreover, by \eqref{eq:convRateFun2-s}, there exists a constant $\mu>0$ such that, for all
sufficiently large $k\in\N_{0}$,
\[
\Delta_k\leq \mu^{1-\vartheta} k^{-\tfrac{1-\vartheta}{\vartheta \theta-1}}.
\]
Together with \eqref{eq:thm:LinConvWeakDealFun-s:1:a}, this estimate yields, for all sufficiently
large $k\in\N_{0}$,
    \begin{align*}
        \|x^k - \ov x\| \le \sum_{i\ge k} c\|\nabla f(x^i)\|^{\theta-1} &\le 
        c  \tau\rho^{-1}(1-\vartheta)^{-1}\sum_{i\ge k} (\Delta_{i}-\Delta_{i+1})
        \leq c  \tau\rho^{-1}(1-\vartheta)^{-1}\Delta_{k}
        \\&
        \le c  \tau\rho^{-1}(1-\vartheta)^{-1}\mu^{1-\vartheta} k^{-\tfrac{1-\vartheta}{\vartheta \theta-1}},
    \end{align*}
which completes the proof.
     \qed
\end{proof}

The following subsection analyzes the convergence of DEAL under the global KL property and derives explicit iteration complexity bounds.

\subsection{{\bf Convergence and Complexity Under Global KL Conditions}}

In Theorems~\ref{thm:GlobConv-s} and \ref{thm:LinConvWeakDeal-s}, we have established the global and linear convergence of DEAL at a critical point under the KL property. Imposing the global KL condition yields analogous results, with a stronger guarantee of convergence to a global optimal solution. Furthermore, the global KL framework enables the complexity analysis of DEAL.

\begin{thm}[Global convergence of DEAL under global KL] \label{thm:GlobConv}
    Suppose $f$ satisfies the global KL inequality with exponent $\vartheta$ and constant $\tau>0$, and let $\seq{x^k}$ be generated by Algorithm~\ref{alg:deal}. The following assertions hold:
    \begin{enumerate}
        \item\label{thm:GlobConv-2}
            $\displaystyle\lim_{k\to\infty} f(x^k)=f^*$;
        \item\label{thm:GlobConv-3}
            If Assumption~\ref{ass:bo} holds, every cluster point of $\seq{x^k}$ belongs to $\mathcal{X}^*$;
        \item\label{thm:GlobConv-4}
            If there exists $c>0$ such that         
            $\Vert x^{k+1}-x^k\Vert \leq c\Vert \nabla f(x^k)\Vert^{\theta-1}$ for all $k\in \N_0$, then $x^k\to x^*$ for some $x^*\in \mathcal{X}^*$.
    \end{enumerate}
\end{thm}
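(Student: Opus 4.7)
The plan is to combine the three structural conclusions already available from Theorem~\ref{thm:GlobConv-s} (monotonicity of $\seq{f(x^k)}$, $\|\nabla f(x^k)\|\to 0$, and summability of $\|\nabla f(x^k)\|^{\theta}$) with the \emph{global} KL inequality, which now controls $f(x)-f^*$ uniformly on $\R^n$ rather than only in a neighborhood of a cluster point. The main technical engine is a Kurdyka-style telescoping argument applied to $\Delta_k:=(f(x^k)-f^*)^{1-\vartheta}$, as in the proof of Theorem~\ref{thm:GlobConv-s}~\ref{thm:GlobConv-s:4}, but with $f(\ov x)$ replaced by $f^*$ everywhere.

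\textbf{Part \ref{thm:GlobConv-2}.} By Theorem~\ref{thm:GlobConv-s}~\ref{thm:GlobConv-s:1}, the sequence $\seq{f(x^k)}$ is non-increasing and bounded below by $f^*$, hence converges to some $\ov f\geq f^*$. By Theorem~\ref{thm:GlobConv-s}~\ref{thm:GlobConv-s:2}, $\|\nabla f(x^k)\|\to 0$. Applying the global KL inequality \eqref{eq:LojaIneqGrad:g} at $x=x^k$ gives $(f(x^k)-f^*)^{\vartheta}\leq \tau\|\nabla f(x^k)\|$; passing to the limit forces $(\ov f-f^*)^{\vartheta}\leq 0$, so $\ov f=f^*$.

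\textbf{Part \ref{thm:GlobConv-3}.} If $\subseq{x^{k_j}}$ converges to some $\ov x$, continuity of $f$ and Part~\ref{thm:GlobConv-2} yield $f(\ov x)=\lim_j f(x^{k_j})=f^*$, so $\ov x\in\mathcal{X}^*$.

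\textbf{Part \ref{thm:GlobConv-4}.} The idea is to show that $\seq{x^k}$ is Cauchy. If $\nabla f(x^k)=0$ for some $k$, then the hypothesis forces $\|x^{k+1}-x^k\|=0$; iterating, the sequence stabilizes at some $x^*$, and by Part~\ref{thm:GlobConv-2} together with continuity, $x^*\in\mathcal{X}^*$. Otherwise $\nabla f(x^k)\neq 0$ for every $k$, which by \eqref{eq:deal} together with Part~\ref{thm:GlobConv-2} implies $f(x^k)>f^*$ for all $k$. Set $\Delta_k:=(f(x^k)-f^*)^{1-\vartheta}>0$ and exploit the concavity of $t\mapsto t^{1-\vartheta}$ on $(0,\infty)$, the descent inequality \eqref{eq:deal}, and the global KL inequality to derive, analogously to \eqref{eq:th:conKL:z1-s},
\[
\Delta_k-\Delta_{k+1}\;\geq\;(1-\vartheta)(f(x^k)-f^*)^{-\vartheta}\bigl(f(x^k)-f(x^{k+1})\bigr)\;\geq\;\tau^{-1}\rho(1-\vartheta)\|\nabla f(x^k)\|^{\theta-1}.
\]
Combining this with the assumed bound $\|x^{k+1}-x^k\|\leq c\|\nabla f(x^k)\|^{\theta-1}$ yields $\|x^{k+1}-x^k\|\leq c\tau\rho^{-1}(1-\vartheta)^{-1}(\Delta_k-\Delta_{k+1})$, and telescoping gives $\sum_{k\geq 0}\|x^{k+1}-x^k\|\leq c\tau\rho^{-1}(1-\vartheta)^{-1}\Delta_0<\infty$. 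Hence $\seq{x^k}$ is Cauchy and converges to some $x^*$; Assumption~\ref{ass:bo} now holds automatically, so Part~\ref{thm:GlobConv-3} delivers $x^*\in\mathcal{X}^*$.

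The only genuine subtlety is the dichotomy on whether $\nabla f(x^k)$ vanishes for finite $k$, which must be handled separately because $\Delta_k$ requires strict positivity of $f(x^k)-f^*$; once this is disposed of, the argument is a straightforward transcription of the local proof in Theorem~\ref{thm:GlobConv-s}~\ref{thm:GlobConv-s:4} with $f(\ov x)$ replaced by $f^*$, made possible by the global reach of \eqref{eq:LojaIneqGrad:g}.
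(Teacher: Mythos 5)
Your proposal is correct and follows essentially the same route as the paper's proof: global KL combined with $\|\nabla f(x^k)\|\to 0$ gives $f(x^k)\to f^*$, continuity handles cluster points, and the Kurdyka telescoping of $\Delta_k=(f(x^k)-f^*)^{1-\vartheta}$ yields the Cauchy property for the full sequence. Your explicit dichotomy for the case $\nabla f(x^k)=0$ (needed so that $f(x^k)-f^*>0$ when invoking concavity) is a small point the paper passes over by simply saying the argument parallels the local proof, but it does not change the approach.
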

\begin{proof}
From the global KL inequality,
\begin{equation}\label{eq:th:globconv:klI:a}
    \tau\Vert \nabla f(x^k)\Vert\geq \left(f(x^k) - f^*\right)^{\vartheta},\quad\quad \forall k\in \N,
\end{equation}
and since $\displaystyle\lim_{k\to\infty} \|\nabla f(x^k)\|=0$ (see Theorem~\ref{thm:GlobConv-s}~\ref{thm:GlobConv-s:2}), it holds that $f(x^k)\to f^*$, verifying Assertion~\ref{thm:GlobConv-2}. 
Let $\ov x$ be a cluster point of the sequence $\seq{x^k}$. 
By Theorem~\ref{thm:GlobConv-s}~\ref{thm:GlobConv-s:3}, we have $f(x^k)\to f(\ov{x})$.
Moreover, by Assertion~\ref{thm:GlobConv-2}, it holds that  $f(x^k)\to f^*$. 
Consequently, $f(\ov x)=f^*$, which implies that $\ov x\in\mathcal X^*$, confirming
Assertion~\ref{thm:GlobConv-3}.
For Assertion~\ref{thm:GlobConv-4}, the argument parallels the proof of Theorem \ref{thm:GlobConv-s}~\ref{thm:GlobConv-s:4}. From \eqref{eq:th:globconv:klI:a},
\[
\Vert x^{k+1}-x^k\Vert \le \tfrac{c\tau}{\rho(1-\vartheta)}(\Delta_k-\Delta_{k+1}),\quad\quad \forall k\in \N,
\]
for $\Delta_k := (f(x^k)-f^*)^{1-\vartheta}$, i.e.,
\begin{align*}
\sum_{k=0}^{\infty}\Vert x^{k+1}-x^k\Vert\leq   \tfrac{c\tau}{\rho(1-\vartheta)}\sum_{k=0}^{\infty}\left(\Delta_k-\Delta_{k+1} \right)
=\tfrac{c\tau}{\rho(1-\vartheta)}\Delta_{0}<\infty.
\end{align*}
Hence $\seq{x^k}$ is a Cauchy sequence and, therefore, converges to some $x^*\in\mathcal{X}^*$. 
    \qed
\end{proof}
This result paves the way for establishing the linear convergence of DEAL.
\begin{cor}[Linear convergence of DEAL under global KL property]\label{cor:onconvinglob}
     Let $f$ satisfy the global KL inequality with exponent $\vartheta$ and constant $\tau>0$, and let the sequence $\seq{x^k}$ be generated by Algorithm~\ref{alg:deal}. Suppose there exists $c>0$ such that          
            \[\Vert x^{k+1}-x^k\Vert \leq c\Vert \nabla f(x^k)\Vert^{\theta-1},\]
            for all $k\in \N_0$.
            Then, there exists $x^*\in \X^*$ such that the following assertions hold:
            \begin{enumerate}
                \item\label{cor:onconvinglob-0}
            If $\theta<\tfrac{1}{\vartheta}$, the sequences $\seq{f(x^k)-f(x^*)}$, $\seq{\|\nabla f(x^k)\|}$, and $\seq{\|x^k-x^*\|}$ become zero in finitely many iterations;
                \item\label{cor:onconvinglob-1}
                    If $\theta=\tfrac{1}{\vartheta}$, the sequences $\seq{f(x^k)-f(x^*)}$, $\seq{\|\nabla f(x^k)\|}$, and $\seq{\|x^k-x^*\|}$ converge linearly to $0$, i.e.,
                    \eqref{eq:convRateFun-s}-\eqref{eq:LinIneqIteDealp-s} hold
                    for all $k\in \N_0$ with $\ov x$ replaced by $x^*$;

                \item\label{cor:onconvinglob-2}
                    If $\theta>\tfrac{1}{\vartheta}$, there exists $\mu>0$ such that 
                    \eqref{eq:convRateFun2-s}-\eqref{eq:SubLinIneqItarDeal-s} hold
                   for all sufficiently large $k\in\N_{0}$, with $\ov x$ replaced by $x^*$.

            \end{enumerate}   
\end{cor}
\begin{proof}
    This follows directly from Theorems~\ref{thm:LinConvWeakDeal-s} and \ref{thm:GlobConv}. \qed
\end{proof}

We now analyze the complexity of DEAL. Fix constants $\varepsilon,\rho,\tau,\theta>0$ such that $q:=1-\tfrac{\rho}{\tau^\theta}>0$, and define the following operator for $x,y>0$:
\begin{align*}
(x,y) \mapsto\K(x, y):=\left\lceil\tfrac{y\log(\varepsilon^{-1}) + \log\left(x\right)}{\log\big(q^{-1}\big)}+1\right\rceil.
\end{align*}

\begin{thm}[Complexity analysis of DEAL under the global KL property] 
\label{thm:LinConvWeakDealFun}
    Let $f$ satisfy the global KL inequality with exponent $\vartheta$ and constant $\tau>0$, and let $\seq{x^k}$ be generated by Algorithm~\ref{alg:deal} with $\theta=\tfrac{1}{\vartheta}$. Suppose there exists $c>0$ such that 
    \[\Vert x^{k+1}-x^k\Vert \leq c\Vert \nabla f(x^k)\Vert^{\theta-1},\]
    for all $k\in\N_0$.  
Then, for any accuracy $\varepsilon>0$, there exists $x^*\in\X^*$ such that the following assertions hold:
    \begin{enumerate}
    \item\label{thm:LinConvWeakDealFun-1}
        The number of iterations to reach
        $f(x^k)-f^*\leq \varepsilon$, denoted $\Ni^f(\varepsilon)$, satisfies
        \[\Ni^f(\varepsilon)\leq \K(f(x^0)-f^*, 1);\]

         \item\label{thm:LinConvWeakDealFun-2}
        The number of iterations to reach $\|\nabla f(x^k)\|\leq \varepsilon$, denoted $\Ni(\varepsilon)$, satisfies
        \[\Ni(\varepsilon)\le \K\big(\rho^{-1}(f(x^0)-f^*), \theta\big);\]

        \item\label{thm:LinConvWeakDealFun-3}
        The number of iterations to reach $\|x^k-x^*\|\leq \varepsilon$, denoted $\Ni^x(\varepsilon)$, satisfies 
        \[\Ni^x(\varepsilon)\le \K\left(s\rho^{-1}(f(x^0)-f^*), \tfrac{\theta}{\theta-1}\right),\]
        with $s:=\Big(\tfrac{c}{1-q^{\tfrac{\theta-1}{\theta}}}\Big)^{\tfrac{\theta}{\theta-1}}$.
        \end{enumerate}
\end{thm}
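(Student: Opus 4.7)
The plan is to derive all three complexity bounds directly from the linear convergence estimates already supplied by Corollary~\ref{cor:onconvinglob}~\ref{cor:onconvinglob-1}, which apply since $\theta=\tfrac{1}{\vartheta}$ and the iterate-distance condition is assumed globally. The common thread is: take the relevant linear bound of the form $C\, q^{\alpha k} \le \varepsilon$, take logarithms, solve for $k$, and apply the ceiling to obtain an iteration count. The $+1$ in the definition of $\K$ will absorb the strict-vs-nonstrict issue when writing the smallest integer above the threshold.

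First I would invoke Corollary~\ref{cor:onconvinglob}~\ref{cor:onconvinglob-1} to fix some $x^*\in\X^*$ such that \eqref{eq:convRateFun-s}--\eqref{eq:LinIneqIteDealp-s} hold for every $k\in\N_0$ with $\ov x$ replaced by $x^*$. For assertion~\ref{thm:LinConvWeakDealFun-1}, iterating \eqref{eq:convRateFun-s} yields $f(x^k)-f^*\le q^{k}(f(x^0)-f^*)$; imposing the right-hand side to be $\le\varepsilon$ gives $k\log(q^{-1})\ge \log(\varepsilon^{-1})+\log(f(x^0)-f^*)$, and the ceiling produces the bound $\K(f(x^0)-f^*,1)$. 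For assertion~\ref{thm:LinConvWeakDealFun-2}, I start from \eqref{eq:LinIneqNablaDeal-s}, which reads
\[
\|\nabla f(x^k)\|\le \Big(\tfrac{f(x^0)-f^*}{\rho}\Big)^{\tfrac{1}{\theta}} q^{\tfrac{k}{\theta}}.
\]
Requiring the right-hand side to be $\le\varepsilon$ and taking logs leads to $k\log(q^{-1})\ge \theta\log(\varepsilon^{-1})+\log\!\big(\rho^{-1}(f(x^0)-f^*)\big)$, whence the ceiling gives $\K\!\big(\rho^{-1}(f(x^0)-f^*),\theta\big)$.

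For assertion~\ref{thm:LinConvWeakDealFun-3}, I start from \eqref{eq:LinIneqIteDealp-s},
\[
\|x^k-x^*\|\le \tfrac{c}{1-q^{\tfrac{\theta-1}{\theta}}}\Big(\tfrac{f(x^0)-f^*}{\rho}\Big)^{\tfrac{\theta-1}{\theta}} q^{\tfrac{k(\theta-1)}{\theta}},
\]
demand the right-hand side be $\le\varepsilon$, take logs, and solve for $k$. Multiplying through by $\tfrac{\theta}{\theta-1}$ to isolate $k\log(q^{-1})$, the constant in front collects as $\big(c/(1-q^{(\theta-1)/\theta})\big)^{\theta/(\theta-1)}=s$, and combining it with the $\rho^{-1}(f(x^0)-f^*)$ factor yields $\log\!\big(s\rho^{-1}(f(x^0)-f^*)\big)$ inside the numerator and the prefactor $\tfrac{\theta}{\theta-1}$ in front of $\log(\varepsilon^{-1})$. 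The ceiling then gives $\K\!\big(s\rho^{-1}(f(x^0)-f^*),\tfrac{\theta}{\theta-1}\big)$.

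There is no genuine obstacle here: the only potentially tricky step is bookkeeping of logarithms and of the exponent $(\theta-1)/\theta$ in the iterate bound to ensure the constant $s$ emerges correctly; the $+1$ term in the definition of $\K$ comfortably covers the transition from the continuous threshold to the smallest integer exceeding it, so no delicate argument is required.
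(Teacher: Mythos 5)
Your proposal is correct and follows essentially the same route as the paper: invoke Corollary~\ref{cor:onconvinglob}~\ref{cor:onconvinglob-1} to get \eqref{eq:convRateFun-s}--\eqref{eq:LinIneqIteDealp-s} with $\ov x$ replaced by $x^*$, then solve each geometric-decay bound for $k$ against the threshold $\varepsilon$, which is exactly how the paper obtains the three $\K$-bounds (the paper merely compresses the logarithm bookkeeping into the phrase ``by the definition of $\K$''). Your algebra, including the emergence of $s=\bigl(c/(1-q^{(\theta-1)/\theta})\bigr)^{\theta/(\theta-1)}$ in assertion~\ref{thm:LinConvWeakDealFun-3}, checks out.
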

\begin{proof}
\ref{thm:LinConvWeakDealFun-1} 
From Corollary~\ref{cor:onconvinglob}~\ref{cor:onconvinglob-1}, the inequality
\eqref{eq:convRateFun-s} holds for all $k\in \N_0$ with $\ov{x}$ replaced by $x^*$. Setting $\widehat{\K}_1:=\K(f(x^0)-f^*, 1)$ gives
    \begin{align*}
       f(x^{\widehat{\K}_1})-f^*\leq q \left(f(x^{\widehat{\K}_1-1})-f^*\right) \leq\ldots\leq q^{\widehat{\K}_1} \left(f(x^{0})-f^*\right) \le  \varepsilon,
    \end{align*}
    confirming that $\Ni^f(\varepsilon)\le \widehat{\K}_1$.
\\
\ref{thm:LinConvWeakDealFun-2} 
It follows by combining inequality \eqref{eq:LinIneqNablaDeal-s}, which holds for all $k\in \N_0$ with $\ov{x}$ replaced by $x^*$, with  
the definition of $\K\big(\rho^{-1}(f(x^0)-f^*), \theta\big)$.
    \\
 \ref{thm:LinConvWeakDealFun-3} It is concluded from \eqref{eq:LinIneqIteDealp-s}, which holds for all $k\in \N_0$ with $\ov{x}$ replaced by $x^*$, together with the definition of 
 $\K\left(s\rho^{-1}(f(x^0)-f^*), \tfrac{\theta}{\theta-1}\right)$, giving our desired result.
    \qed
\end{proof}

It is worth noting that certain convergence guarantees for the gradient norm can be obtained directly from the descent property \eqref{eq:deal}, without invoking the KL framework. However, imposing the global KL property significantly strengthens these guarantees by yielding substantially sharper complexity bounds. The following remark formalizes this observation.
\begin{rem}\label{rem:LinConvGrad}
As shown in the proof of Theorem~\ref{thm:GlobConv-s}~\ref{thm:GlobConv-s:2}, inequality \eqref{eq:thm:GlobConv-s} follows immediately from \eqref{eq:deal}, implying
$\sum_{k=0}^\infty\Vert \nabla f(x^k)\Vert^{\theta}< \infty$ and 
$\displaystyle\lim_{k\to\infty} \|\nabla f(x^k)\|=0$, without requiring the KL inequality. Moreover, \eqref{eq:thm:GlobConv-s} yields
 \[
    \rho N \min_{0\leq k\leq N-1}\|\nabla f(x^k)\|^{\theta}\leq f(x^0)-f^*,
    \]
and hence
    \begin{equation*}\label{eq:convrateg}
             \min_{0\leq k\leq N-1}\|\nabla f(x^k)\|\leq \left(\tfrac{f(x^0)-f^*}{\rho N}\right)^{\nicefrac{1}{\theta}},\quad \forall N\in\N.
    \end{equation*}
    Consequently, for a given accuracy parameter $\varepsilon>0$, we have
   \begin{align*}
           \min_{0\leq k\leq \K-1}\|\nabla f(x^k)\|\leq \left(\tfrac{f(x^0)-f^*}{\rho \K}\right)^{\nicefrac{1}{\theta}}\leq \varepsilon,
    \end{align*}
with $\K:=\left\lceil\rho^{-1} \varepsilon^{-\theta}\big(f(x^0)-f^*\big)+1\right\rceil$.
   In other words, the condition
     $\|\nabla f(x^k)\|\leq \varepsilon$ is satisfied within 
  $O\big(\varepsilon^{-\theta}\big)$
iterations in this case. By contrast, Theorem~\ref{thm:LinConvWeakDealFun} shows that, under the global KL inequality, the iteration complexity improves to
$O\big(\log(\varepsilon^{-1})\big)$.\qed
\end{rem}

We conclude this section with a remark on the global KL condition.  

\begin{rem}
If the global KL inequality \eqref{eq:LojaIneqGrad:g} holds on the sublevel set
$\mathcal{L}(\lambda):=\{x \in \R^n \mid f(x)\leq \lambda\}$ for some $\lambda>0$,
and the initial point satisfies $x^0 \in \mathcal{L}(\lambda)$. Then, all convergence and complexity results
established in this section remain valid, with convergence guaranteed to a global minimizer.\qed
\end{rem}

\section{Generalized Descent Directions and Instances of DEAL}\label{sec:InstancesDEAL}
We have already analyzed properties of the sequence $\seq{x^k}$ generated by DEAL without specifying a constructive mechanism for generating the iterates, in particular the role of search directions. To improve both the generality and efficiency of DEAL, we now incorporate descent directions, which will play a crucial role in the following subsections.

Next, we extend the notion of (sufficient) descent direction.  

\begin{defin}[Generalized descent direction]\label{def:G-descent-dir}
A direction $d^k\in \R^n$ is a \textit{generalized (sufficient) descent direction} if there exist $\beta>-1$ and a (sufficient) descent direction $\ov d^k$ such that
    \begin{equation}\label{eq:genDescDir}
        d^k= \|\nabla f(x^k)\|^\beta \ov d^k.
    \end{equation}
\end{defin}

From Definition~\ref{def:G-descent-dir} and \eqref{eq:sufDescCond}, if $d^k$ is a generalized sufficient descent direction, then it holds that
\begin{equation}\label{eq:genSufDescCond}
    \innprod{\nabla f(x^k)}{d^k}\leq -c_1\|\nabla f(x^k)\|^{2+\beta}, \quad \|d^k\|\leq c_2 \|\nabla f(x^k)\|^{1+\beta}.
\end{equation}
The parameter $\beta$ in Definition~\ref{def:G-descent-dir} adds flexibility by scaling the descent strength with the gradient norm, potentially improving performance; see Section~\ref{sec:preNumExper}. In the forthcoming DEAL-C and DEAL-A algorithms, $\beta$ is linked to the $\nu$-H\"{o}lder condition and the KL exponent $\vartheta$, enabling linear convergence. Furthermore, as discussed in the following remark, setting $\theta=2+\beta$ in Algorithm~\ref{alg:deal} ensures
   \begin{align}\label{eq:MainEquality}
        \Vert x^{k+1}-x^k\Vert \leq c\Vert \nabla f(x^k)\Vert^{\theta-1},
    \end{align}
for some $c>0$, so this condition holds automatically and need not be imposed separately.

\begin{rem}\label{rem:cononiter}
    In Theorem~\ref{thm:GlobConv-s}~\ref{thm:GlobConv-s:4}, we assume condition \eqref{eq:MainEquality}.  
This requirement is automatically satisfied in standard settings. Indeed, consider the update
    $x^{k+1} = x^k + \alpha_k d^k$, where $d^k=\|\nabla f(x^k)\|^{\beta} \ov d^k$ with $\beta>-1$ and $\ov d^k$ is a sufficient descent direction satisfying \eqref{eq:sufDescCond}. If $\alpha_{k}\le \alpha_{\max}$ for some $\alpha_{\max}>0$ and 
    $\theta=2+\beta$, then
    \[
    \|x^{k+1}-x^{k}\| = \alpha_k \|d^k\| \le \alpha_{\max} c_2 \|\nabla f(x^k)\|^{\theta-1}.
    \]
This ensures that \eqref{eq:MainEquality} holds with $c=\alpha_{\max} c_2$.\qed 
\end{rem}

\subsection{{\bf DEAL with Constant Step-Size}} \label{sec:descMethodCSS}
In this subsection, we present DEAL with a constant step-size (DEAL-C) as a basic instance of the DEAL framework. The structure and well-definedness of this algorithm rely on the $\nu$-H\"older continuity of the gradient of the objective function $f$, as stipulated in the following assumption.
\begin{ass}\label{ass:Holder}
        The objective function $f$ in \eqref{eq:prob} has a $\nu$-H\"older continuous gradient with $\nu\in(0,1]$ and constant $L>0$, i.e., $f\in \C^{1,\nu}_L(\R^n)$.
\end{ass}

Let $d^k\in\R^n$ satisfy the generalized sufficient descent condition \eqref{eq:genSufDescCond} with $\beta=\tfrac{1-\nu}{\nu}$ and constants $c_1,c_2>0$. Consider the update rule $x^{k+1}=x^k+\alpha_k d^k$ with
\begin{equation}\label{eq:cons:al}
\alpha_k\in \left( 0, \big(\tfrac{c_1(1+\nu)}{c_2^{1+\nu}L}\big)^{\nicefrac{1}{\nu}}\right].
\end{equation}
From Assumption~\ref{ass:Holder}, the H\"olderian descent lemma, condition \eqref{eq:genSufDescCond}, and $\beta=\tfrac{1-\nu}{\nu}$, it follows that
    \begin{equation}\label{eq:decIneq}
        \begin{split}
            f(x^{k+1})=f(x^k+\alpha_k d^k) &\leq f(x^k)+ \alpha_k \innprod{\nabla f(x^k)}{d^k}+\tfrac{L}{1+\nu} \alpha_{k}^{1+\nu} \|d^k\|^{1+\nu}\\
            &\leq f(x^k)- c_1\alpha_{k} \|\nabla f(x^k)\|^{2+\beta}+\tfrac{c_2^{1+\nu} L}{1+\nu} \alpha_{k}^{1+\nu} \|\nabla f(x^k)\|^{(1+\beta)(1+\nu)}\\
            &\leq f(x^k)- \alpha_{k} \left(c_1 -\tfrac{c_2^{1+\nu} L}{1+\nu} \alpha_{k}^\nu\right) \|\nabla f(x^k)\|^{\tfrac{1+\nu}{\nu}}.
        \end{split}
    \end{equation}
    By \eqref{eq:cons:al}, we obtain $\alpha_{k} \big(c_1 -\tfrac{c_2^{1+\nu} L}{1+\nu} \alpha_{k}^\nu\big)\geq 0$, ensuring $f(x^{k+1})\leq f(x^k)$. Moreover, the maximum decrease is achieved by maximizing the concave function $\alpha \mapsto  c_1\alpha -\tfrac{c_2^{1+\nu} L}{1+\nu} \alpha^{\nu+1}$ which attains its peak at 
    $\alpha=\big(\tfrac{c_1}{c_2^{1+\nu}L}\big)^{\nicefrac{1}{\nu}}$, yielding
   \begin{equation}\label{eq:maxst}
        f(x^{k+1}) \leq f(x^k)- \tfrac{c_1 \alpha\nu}{1+\nu} \|\nabla f(x^k)\|^{1+\tfrac{1}{\nu}}.
    \end{equation}
    
We now formalize DEAL with constant step-size in Algorithm~\ref{alg:dealConstantSS}. In Lemma \ref{lem:dealc:deal} we show that DEAL-C fits within the general DEAL framework through suitable choices of $\theta$ and $\rho$.
\RestyleAlgo{boxruled}
\begin{algorithm}[h]
\DontPrintSemicolon
\textbf{Initialization:} $x^0\in\R^ n$,~ $k=0$;

    \Repeat{the stopping criteria holds
    }{
    Choose a direction $d^k$ as \eqref{eq:genDescDir} satisfying \eqref{eq:genSufDescCond} with $\beta=\tfrac{1-\nu}{\nu}$;\;
    Set the constant step-size as $\alpha=\Big(\tfrac{c_1}{c_2^{1+\nu}L}\Big)^{\nicefrac{1}{\nu}}$;\;
    Set $x^{k+1}=x^k+\alpha d^k$ and $k=k+1$;
    }
\caption{DEAL-C (DEAL with constant step-size) \label{alg:dealConstantSS}}
\end{algorithm}

\begin{lem}[DEAL-C as an instance of DEAL]\label{lem:dealc:deal}
Let Assumption~\ref{ass:Holder} hold and $\seq{x^k}$ be generated by Algorithm \ref{alg:dealConstantSS}.
Then, this sequence can be reproduced by Algorithm~\ref{alg:deal}, by setting $\theta =\beta+2=1+\tfrac{1}{\nu}$
and  $\rho = \tfrac{c_1 \alpha\nu}{1+\nu}$, where $\alpha= \big(\tfrac{c_1}{c_2^{1+\nu}L}\big)^{\nicefrac{1}{\nu}}$.
\end{lem}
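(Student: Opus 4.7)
The plan is to show directly that the sequence produced by DEAL-C satisfies the abstract descent inequality \eqref{eq:deal} with the declared parameters, so that Algorithm~\ref{alg:deal} could have generated the same iterates by accepting each $x^{k+1}$ from DEAL-C. Since \eqref{eq:deal} is an inequality on function values only and does not prescribe how $x^{k+1}$ is obtained, the task reduces to checking that the DEAL-C update yields the correct per-step decrease.

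First I would observe that, under Assumption~\ref{ass:Holder}, the H\"olderian descent lemma \eqref{eq:HDescentLemma} applies. Applying it along the DEAL-C step $x^{k+1}=x^k+\alpha d^k$, and then invoking the generalized sufficient descent estimates \eqref{eq:genSufDescCond} with $\beta=\tfrac{1-\nu}{\nu}$, reproduces precisely the chain of inequalities in \eqref{eq:decIneq}. This is routine bookkeeping: one substitutes $\alpha_k=\alpha$ throughout, uses $(1+\beta)(1+\nu)=\tfrac{1+\nu}{\nu}=2+\beta$, and factors out $\alpha$ from the quadratic-like lower envelope in $\alpha$.

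Next I would note that the choice $\alpha=\left(\tfrac{c_1}{c_2^{1+\nu}L}\right)^{1/\nu}$ is the unique maximizer of the concave scalar function $\alpha\mapsto c_1 \alpha-\tfrac{c_2^{1+\nu}L}{1+\nu}\alpha^{1+\nu}$ on $(0,\infty)$, and it lies inside the admissible interval \eqref{eq:cons:al}. Plugging this maximizer back into \eqref{eq:decIneq} gives exactly the inequality \eqref{eq:maxst}, that is,
\[
f(x^{k+1})\leq f(x^k)-\tfrac{c_1\alpha\nu}{1+\nu}\|\nabla f(x^k)\|^{1+\tfrac{1}{\nu}}.
\]
Reading off the exponent and the constant then identifies $\theta=1+\tfrac{1}{\nu}=2+\beta$ and $\rho=\tfrac{c_1\alpha\nu}{1+\nu}$, which are positive and satisfy $\theta>1$ since $\nu\in(0,1]$. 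Hence \eqref{eq:deal} holds for every $k\in\N_0$, so the DEAL-C sequence is an admissible DEAL sequence with these parameters, completing the proof.

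I do not foresee a genuine obstacle: the derivation in \eqref{eq:decIneq}--\eqref{eq:maxst} that precedes the lemma already contains all the required estimates, and the lemma essentially repackages them. The only thing to be careful about is checking that the admissibility interval \eqref{eq:cons:al} is nontrivial at its right endpoint, which follows directly from $(1+\nu)\ge 1$, and confirming that the identification $\theta=2+\beta$ is consistent with the exponent appearing in \eqref{eq:maxst}, which is immediate from $\beta=\tfrac{1-\nu}{\nu}$.
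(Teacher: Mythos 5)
Your proposal is correct and follows essentially the same route as the paper: the paper's proof simply cites the derivation \eqref{eq:decIneq}--\eqref{eq:maxst} preceding the lemma (H\"olderian descent lemma plus the generalized sufficient descent condition with $\beta=\tfrac{1-\nu}{\nu}$, then evaluation at the maximizing step-size $\alpha$), which is exactly the chain of estimates you reproduce. Your added checks (that the maximizer lies in the interval \eqref{eq:cons:al} and that $(1+\beta)(1+\nu)=2+\beta=1+\tfrac1\nu$) are consistent with the paper's computation and require no further comment.
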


\begin{proof}
From the construction of Algorithm \ref{alg:dealConstantSS}, we have $\theta>1$, $\rho>0$, and 
from \eqref{eq:maxst}, $x^{k+1}$ is generated such that satisfies \eqref{eq:deal}.
\qed
\end{proof}

As a direct consequence, DEAL-C inherits the global convergence, linear rates, and complexity guarantees established for the general DEAL framework.

\begin{cor}[Convergence and complexity analysis of DEAL-C]\label{cor:conDEALc}
Let Assumption \ref{ass:Holder} hold and $\seq{x^k}$ be generated by Algorithm \ref{alg:dealConstantSS}.
Then, the results of Theorems~\ref{thm:GlobConv-s},~\ref{thm:LinConvWeakDeal-s},~\ref{thm:GlobConv},~and~\ref{thm:LinConvWeakDealFun} and Corollary~\ref{cor:onconvinglob} hold
by setting  $\theta =\beta+2=1+\tfrac{1}{\nu}$, $c = c_2 \alpha$,
and  $\rho = \tfrac{c_1 \alpha\nu}{1+\nu}$, where $\alpha= \big(\tfrac{c_1}{c_2^{1+\nu}L}\big)^{\nicefrac{1}{\nu}}$.
\end{cor}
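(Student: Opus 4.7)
The plan is to reduce this corollary to the general DEAL results already established, by verifying that DEAL-C satisfies \emph{all} the hypotheses appearing in Theorems~\ref{thm:GlobConv-s}, \ref{thm:LinConvWeakDeal-s}, \ref{thm:GlobConv}, \ref{thm:LinConvWeakDealFun} and Corollary~\ref{cor:onconvinglob} for the prescribed choice of $\theta$, $\rho$, and $c$. Two ingredients need to be checked: first, that $\seq{x^k}$ is a DEAL sequence in the sense of \eqref{eq:deal}; second, that the step-length bound $\Vert x^{k+1}-x^k\Vert \leq c\Vert\nabla f(x^k)\Vert^{\theta-1}$ holds, since this condition is invoked repeatedly in parts \ref{thm:GlobConv-s:4}, \ref{thm:LinConvWeakDealFun-s:1}--\ref{thm:LinConvWeakDealFun-s:2}, \ref{thm:GlobConv-4}, \ref{cor:onconvinglob-1}--\ref{cor:onconvinglob-2}, and \ref{thm:LinConvWeakDealFun-1}--\ref{thm:LinConvWeakDealFun-3}.

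The first ingredient is immediate from Lemma~\ref{lem:dealc:deal}: with $\theta=1+\tfrac{1}{\nu}$ and $\rho=\tfrac{c_1\alpha\nu}{1+\nu}$, the inequality \eqref{eq:maxst} is exactly \eqref{eq:deal}, so DEAL-C is an instance of DEAL. For the second ingredient, I would use the update rule $x^{k+1}=x^k+\alpha d^k$ together with the size bound in \eqref{eq:genSufDescCond} applied to the generalized descent direction $d^k$ with $\beta=\tfrac{1-\nu}{\nu}$. Since $1+\beta=\tfrac{1}{\nu}=\theta-1$, this yields
\[
\Vert x^{k+1}-x^k\Vert = \alpha \Vert d^k\Vert \leq c_2\alpha \Vert \nabla f(x^k)\Vert^{1+\beta} = c\,\Vert \nabla f(x^k)\Vert^{\theta-1},
\]
with $c=c_2\alpha$, which verifies the assumption in the form it appears throughout the DEAL theorems (and, in particular, aligns with Remark~\ref{rem:cononiter}).

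With both ingredients in place, each conclusion transfers mechanically. Theorem~\ref{thm:GlobConv-s} applies without further work (parts \ref{thm:GlobConv-s:1}--\ref{thm:GlobConv-s:3} require nothing beyond \eqref{eq:deal}, while part \ref{thm:GlobConv-s:4} additionally uses the step-length bound just established, under Assumption~\ref{ass:bo} and the KL inequality at a cluster point). Theorem~\ref{thm:LinConvWeakDeal-s} and Corollary~\ref{cor:onconvinglob} follow analogously, substituting the specific $\rho$, $\theta$, and $c$ into the convergence-rate expressions. Theorems~\ref{thm:GlobConv} and~\ref{thm:LinConvWeakDealFun} apply whenever $f$ satisfies the global KL property, again via the same parameters.

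There is no real obstacle; the only subtle point is confirming that $\beta=\tfrac{1-\nu}{\nu}$, which is built into DEAL-C so that the $\nu$-H\"olderian descent inequality~\eqref{eq:HDescentLemma} combines with \eqref{eq:genSufDescCond} to yield a clean decrease of order $\Vert\nabla f(x^k)\Vert^{1+1/\nu}$, matches the exponent $\theta-1=\tfrac{1}{\nu}$ in the step-length bound. Once this matching is observed, the corollary reduces to a direct citation of the relevant DEAL theorems. \qed
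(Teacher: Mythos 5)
Your proposal is correct and follows essentially the same route as the paper: it invokes Lemma~\ref{lem:dealc:deal} to identify DEAL-C as an instance of DEAL with $\theta=1+\tfrac{1}{\nu}$ and $\rho=\tfrac{c_1\alpha\nu}{1+\nu}$, and your explicit verification of $\Vert x^{k+1}-x^k\Vert\le c_2\alpha\Vert\nabla f(x^k)\Vert^{\theta-1}$ is precisely the argument of Remark~\ref{rem:cononiter} with $\alpha_{\max}=\alpha$, which is what the paper cites.
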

\begin{proof}
The result follows directly from Lemma~\ref{lem:dealc:deal} and Remark~\ref{rem:cononiter} with $\alpha_{\max}=\alpha$. \qed
\end{proof}

A direct consequence of Corollary~\ref{cor:conDEALc} together with Theorem~\ref{thm:LinConvWeakDeal-s}~\ref{thm:LinConvWeakDealFun-s:1} is that, in order to achieve linear convergence in this setting, it is necessary that $\vartheta = \tfrac{\nu}{1+\nu}$. As a convex example satisfying this relation, let us consider the linear inverse problem given by
\begin{equation}\label{eq:PRP}
    \min_{x\in\mathbb{R}^n} \; \tfrac{1}{\delta}\|Ax-b\|^\delta,
\end{equation}
where $A\in \R^{m\times n}$ is a full rank matrix with smallest singular value $\sigma_{\min}>0$, $b\in \R^m$, and $\delta\in (1,2]$.
In the following proposition, we establish that the objective function in (\ref{eq:PRP}) satisfies the Kurdyka-{\L}ojasiewicz (KL) inequality with exponent $\vartheta=1-\frac{1}{\delta}$ and possesses a  $(\delta-1)$-H\"older gradient. For a nonconvex example, see Section \ref{sec:preNumExper}.

\begin{prop}\label{prop:invprob}
   Let  $f$ be the function defined by \eqref{eq:PRP}. Then, $f$ holds the global KL inequality with exponent $\vartheta=1-\frac{1}{\delta}$ and constant $\tau=\tfrac{1}{\sigma_{\min}\delta^{1-\tfrac{1}{\delta}}}$.  Moreover, the function $f$ has $(\delta-1)$-H\"older gradient with constant $2^{2-\delta}\|A\|^\delta$.
\end{prop}
\begin{proof}
    Without loss of generality, let $x\in \R^n$ such that $Ax-b\neq 0$. It holds that
    \[
    \tau \|\nabla f(x)\| = \tau \Big\|\|Ax-b\|^{\delta-2} A^T(Ax-b)\Big\| \geq \tau \sigma_{\min} \|Ax-b\|^{\delta-1} = \tfrac{1}{\delta^{\vartheta}}\|Ax-b\|^{\delta\vartheta} \ge (f(x)-f^*)^\vartheta,
    \]
    verifying the global KL property. 
    To prove $(\delta-1)$-H\"older gradient property, it follows from \cite[Theorem 6.3]{rodomanov2020smoothness} that
    \begin{align}\label{eq:HolderNorm}
        \Big\|\|z\|^{\delta-2}z - \|w\|^{\delta-2}w\Big\|\le 2^{2-\delta} \|z-w\|^{\delta-1},
    \end{align}
    for all $z,w\in \R^m$. Thus, for any $x, y \in \R^n$, we obtain
    \begin{align*}
        \|\nabla f(x) - \nabla f(y)\| &=
        \Big \| \|Ax-b\|^{\delta-2} A^T(Ax-b) - \|Ay-b\|^{\delta-2} A^T(Ay-b) \Big\| \\
        &\le \|A\| \Big \| \|Ax-b\|^{\delta-2} (Ax-b) - \|Ay-b\|^{\delta-2} (Ay-b) \Big\|\\
        & \le 2^{2-\delta}\|A\|  \big\|(Ax-b)-(Ay-b)\big\|^{\delta-1}\\
        &\le 2^{2-\delta}\|A\|^\delta \|x-y\|^{\delta-1},
    \end{align*}
    ensuring the $(\delta-1)$-H\"older gradient property of $f$ with constant $2^{2-\delta}\|A\|^\delta$.\qed
\end{proof}

\subsection{{\bf DEAL with Armijo Line Search}} \label{sec:descMethodALS}
As a second instance of the DEAL framework, we now formalize DEAL with Armijo line search (DEAL-A), presented in Algorithm~\ref{alg:dealArmijo}.

\vspace{4mm}
\RestyleAlgo{boxruled}
\begin{algorithm}[H]
\DontPrintSemicolon
\textbf{Initialization:} $x^0\in\R^ n$,~ $\ov\alpha>0$,~ $\sigma\in (0,1)$,~ $\eta\in (0,1)$,~ $k=0$;

    \Repeat{
        the stopping criterion holds
    }{
    Choose a direction $d^k$ as \eqref{eq:genDescDir} satisfying \eqref{eq:genSufDescCond} with $\beta=\tfrac{1-\nu}{\nu}$;\;
    Set $\ov \alpha_k=\ov\alpha$, $p=0$, and  $\ov x^{k+1}=x^k+\ov\alpha_k d^k$;\;
    \While{$f(\ov x^{k+1})>f(x^k)+\sigma \ov\alpha_k \innprod{\nabla f(x^k)}{d^k}$}{
        Set $p=p+1$, $\ov \alpha_k=\eta^p \ov\alpha$, and $\ov x^{k+1}=x^k+\ov\alpha_k d^k$;
    }
    Set $p_k=p$, $\alpha_k=\ov \alpha_k$, $x^{k+1}=\ov x^{k+1}$, and $k=k+1$;
    }
\caption{DEAL-A (DEAL with Armijo line search) \label{alg:dealArmijo}}
\end{algorithm}
\vspace{4mm}

The termination of the loop in Steps~5–7 of Algorithm~\ref{alg:dealArmijo} after finitely many backtracking steps is a standard result \cite{nocedal2006numerical}. 
For each $k\in \N_0$, Algorithm~\ref{alg:dealArmijo} guarantees
\[
f(x^{k+1})\leq f(x^k)+\sigma \alpha_k \innprod{\nabla f(x^k)}{d^k}.
\]
If $d^k$ satisfies the generalized sufficient descent conditions \eqref{eq:genSufDescCond}, then
\begin{align}\label{eq:DealArmio1}
    f(x^{k+1})\leq f(x^k)-\sigma \alpha_k c_1\|\nabla f(x^k)\|^{2+\beta}.
\end{align}
Consequently, if the step-size $\alpha_k$ is bounded below by some $\tilde{\alpha}>0$, we obtain
\begin{align}\label{eq:DealArmio2}
    f(x^{k+1})\leq f(x^k)-\rho\|\nabla f(x^k)\|^{2+\beta},
\end{align}
where $\rho=\sigma \tilde{\alpha} c_1$. In this case, Algorithm~\ref{alg:dealArmijo} fits into the DEAL framework. 
The next lemma establishes a sufficient condition ensuring such a positive lower bound on the step-size.

\begin{lem}[Step-size lower bound] 
\label{lem:lowBoundSS}
Let Assumption~\ref{ass:Holder} hold and let $\seq{x^k}$ be generated by Algorithm~\ref{alg:dealArmijo}. Then, the maximum number of  inner iterations  $p_k$ and the step-size $\alpha_k$ satisfy $p_k\leq \ov p$ and $\alpha_k\geq \tilde{\alpha}$, where
    \begin{equation}\label{eq:pkUpperAlphaLower}
        \ov p:= 1+\tfrac{\log \ov c}{\log \eta}, \quad \ov c:=\left(\tfrac{(1+\nu)(1-\sigma) c_1}{L \ov\alpha^{\nu} c_2^{1+\nu}}\right)^{\nicefrac{1}{\nu}}, \quad  \tilde{\alpha}:=\eta^{\ov p} \ov\alpha.
    \end{equation}
\end{lem}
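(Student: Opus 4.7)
The plan is to derive a sufficient condition on a trial step-size $\alpha$ under which the Armijo test is guaranteed to succeed, and then use the contrapositive to bound how many backtracking steps can possibly fail. This is a standard route, but tailored here to the $\nu$-Hölder setting combined with the generalized sufficient descent condition \eqref{eq:genSufDescCond} with $\beta=\tfrac{1-\nu}{\nu}$.

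First, I would write the Armijo gap at trial step-size $\alpha>0$ using the Hölderian descent lemma~\eqref{eq:HDescentLemma}:
\[
f(x^k+\alpha d^k) - f(x^k) - \sigma \alpha \innprod{\nabla f(x^k)}{d^k}
\leq (1-\sigma)\alpha \innprod{\nabla f(x^k)}{d^k} + \tfrac{L}{1+\nu}\alpha^{1+\nu}\Vert d^k\Vert^{1+\nu}.
\]
Then I would use \eqref{eq:genSufDescCond} to bound $\innprod{\nabla f(x^k)}{d^k}\leq -c_1\Vert\nabla f(x^k)\Vert^{2+\beta}$ and $\Vert d^k\Vert^{1+\nu}\leq c_2^{1+\nu}\Vert\nabla f(x^k)\Vert^{(1+\beta)(1+\nu)}$. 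The key identity $\beta=\tfrac{1-\nu}{\nu}$ gives $2+\beta = (1+\beta)(1+\nu) = \tfrac{1+\nu}{\nu}$, so both terms share a common gradient power $\Vert\nabla f(x^k)\Vert^{(1+\nu)/\nu}$, yielding the clean bound
\[
f(x^k+\alpha d^k) - f(x^k) - \sigma \alpha \innprod{\nabla f(x^k)}{d^k}
\leq \alpha\Big[-c_1(1-\sigma) + \tfrac{L c_2^{1+\nu}}{1+\nu}\alpha^{\nu}\Big]\Vert\nabla f(x^k)\Vert^{\tfrac{1+\nu}{\nu}}.
\]

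Next I would observe that the right-hand side is non-positive (hence Armijo holds) whenever
\[
\alpha^{\nu}\leq \tfrac{(1-\sigma)(1+\nu)c_1}{L c_2^{1+\nu}},
\]
i.e., whenever $\alpha/\ov\alpha \leq \ov c^{\,1/\ldots}$ with the quantity $\ov c$ as defined in~\eqref{eq:pkUpperAlphaLower} (modulo the matching of exponents the authors have written). Taking the contrapositive: if the Armijo test fails at the trial step $\eta^{p-1}\ov\alpha$ (the iterate just before acceptance), then $(\eta^{p-1}\ov\alpha)^{\nu}$ must exceed that threshold, which rearranges to $\eta^{p-1} > \ov c$.

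Finally, taking $\log$ and using $\log\eta<0$ (which flips the inequality) gives $p-1 < \tfrac{\log\ov c}{\log \eta}$, hence $p_k \leq \ov p$ as defined in~\eqref{eq:pkUpperAlphaLower}. The accepted step-size then satisfies $\alpha_k = \eta^{p_k}\ov\alpha \geq \eta^{\ov p}\ov\alpha = \tilde\alpha$, which is the stated lower bound. There is no serious obstacle here; the entire argument is a direct Hölder analogue of the classical Lipschitz backtracking bound, and the only subtle point is matching the exponents $2+\beta=(1+\beta)(1+\nu)=\tfrac{1+\nu}{\nu}$ so that the two gradient-norm factors cancel and one obtains a sufficient condition involving $\alpha^{\nu}$ alone (independent of $\Vert\nabla f(x^k)\Vert$), which is what makes the uniform lower bound $\tilde\alpha>0$ possible.
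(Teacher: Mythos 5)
Your proposal is correct and follows essentially the same route as the paper's proof: apply the H\"olderian descent lemma at the last rejected trial step $\eta^{p_k-1}\ov\alpha$, invoke \eqref{eq:genSufDescCond} with the exponent identity $2+\beta=(1+\beta)(1+\nu)=\tfrac{1+\nu}{\nu}$ so the gradient-norm factors cancel, and conclude $\eta^{p_k-1}\geq \ov c$, hence $p_k\leq \ov p$ and $\alpha_k\geq\eta^{\ov p}\ov\alpha$; phrasing it as a sufficient condition for acceptance plus a contrapositive is only a cosmetic difference. One substantive remark on the exponent you left as ``modulo the matching'': your bound $\|d^k\|^{1+\nu}\leq c_2^{1+\nu}\|\nabla f(x^k)\|^{(1+\beta)(1+\nu)}$ is the correct one (and matches the paper's own DEAL-C derivation \eqref{eq:decIneq}--\eqref{eq:cons:al}), whereas the paper's proof and the stated $\ov c$ use $c_2^{1+\beta}=c_2^{1/\nu}$, which appears to be a slip from raising $c_2$ to the power $1+\beta$ instead of $1+\nu$; so your argument actually establishes the lemma with $c_2^{1+\nu}$ in place of $c_2^{1/\nu}$ inside $\ov c$, and this is not a gap on your side but a constant that should be corrected in the statement.
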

\begin{proof}
    Without loss of generality, let $p_k\ge 1$. By definition, $p_k$ is the smallest integer for which the Armijo rule holds, i.e., 
    \begin{equation}\label{eq:aniArmijo}
        f(x^k+\eta^{p_k-1}\ov\alpha d^k)> f(x^k)+\sigma \eta^{p_k-1}\ov\alpha \innprod{\nabla f(x^k)}{d^k}.
    \end{equation}
    The H\"{o}lderian descent lemma gives, for all $\alpha>0$,
    \[
    f(x^k+\alpha d^k) \le f(x^k)+\alpha \innprod{\nabla f(x^k)}{d^k}+\tfrac{L}{1+\nu} \alpha^{1+\nu} \|d^k\|^{1+\nu}.
    \]
    Substituting $\alpha=\eta^{p_k-1}\ov\alpha$ and combining with \eqref{eq:aniArmijo} yields
    \begin{align*}
        f(x^k)+\sigma \eta^{p_k-1}\ov\alpha \innprod{\nabla f(x^k)}{d^k} < f(x^k)+\eta^{p_k-1}\ov\alpha \innprod{\nabla f(x^k)}{d^k}+\tfrac{L}{1+\nu} (\eta^{p_k-1}\ov\alpha)^{1+\nu} \|d^k\|^{1+\nu}.
    \end{align*}
    Together with the descent condition \eqref{eq:genSufDescCond} and $\sigma\in (0,1)$, this implies
    \[
    (1-\sigma) c_1\|\nabla f(x^k)\|^{2+\beta} \leq (\sigma-1)  \innprod{\nabla f(x^k)}{d^k}
    < \tfrac{L}{1+\nu}(\eta^{p_k-1}\ov\alpha)^{\nu} \|d^k\|^{1+\nu}
        \leq \tfrac{L}{1+\nu} (\eta^{p_k-1}\ov\alpha)^{\nu} c_2^{1+\nu} \|\nabla f(x^k)\|^{(1+\nu)(1+\beta)}.
    \]
    With $\beta=\tfrac{1-\nu}{\nu}$, this reduces to
    \begin{align*}
        (1-\sigma) c_1 \leq \tfrac{L}{1+\nu} (\eta^{p_k-1}\ov\alpha)^{\nu} c_2^{1+\nu}, 
    \end{align*}
    leading to
    \begin{align*}
        \ov c:=\left(\tfrac{(1+\nu)(1-\sigma) c_1}{L \ov\alpha^{\nu} c_2^{1+\nu}}\right)^{\tfrac{1}{\nu}} \leq \eta^{p_k-1}.
    \end{align*}
    Hence $p_k\leq 1+\tfrac{\log \ov c}{\log \eta}=\ov p$. Since $\alpha_k=\eta^{p_k}\ov\alpha$, we also obtain $ \alpha_k\geq \eta^{\ov p} \ov\alpha$, adjusting our claims.
    \qed
\end{proof}

DEAL-A can be interpreted as an instance of DEAL by selecting appropriate parameters.
\begin{lem}[DEAL-A as an instance of DEAL]\label{lem:dealA:deal}
Let Assumption~\ref{ass:Holder} hold and let $\seq{x^k}$ be generated by Algorithm \ref{alg:dealArmijo}.
Then, this sequence can be reproduced by Algorithm~\ref{alg:deal}, by setting $\theta =\beta+2=1+\tfrac{1}{\nu}$
and  $\rho = \sigma \tilde{\alpha}c_1$, where $\tilde{\alpha}$ is given in \eqref{eq:pkUpperAlphaLower}.
\end{lem}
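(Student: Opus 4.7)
The plan is to verify that with the prescribed choice of $\theta$ and $\rho$, the iterates $\seq{x^k}$ produced by Algorithm~\ref{alg:dealArmijo} fulfill the generalized descent inequality \eqref{eq:deal} that defines DEAL. Since $\nu\in(0,1]$, we immediately have $\theta = 1+\tfrac{1}{\nu}>1$, and since $\sigma\in(0,1)$, $c_1>0$, and $\tilde\alpha>0$, we also have $\rho>0$, so the parameter requirements of Algorithm~\ref{alg:deal} are met.

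The substantive step is to derive \eqref{eq:DealArmio2} unconditionally along the sequence. By construction of the backtracking loop, every accepted step satisfies the Armijo condition
\[
f(x^{k+1})\leq f(x^k)+\sigma \alpha_k \innprod{\nabla f(x^k)}{d^k}.
\]
Combining this with the generalized sufficient descent property \eqref{eq:genSufDescCond} of $d^k$ (with the prescribed exponent $\beta=\tfrac{1-\nu}{\nu}$) yields \eqref{eq:DealArmio1}, namely
\[
f(x^{k+1})\leq f(x^k)-\sigma \alpha_k c_1\|\nabla f(x^k)\|^{2+\beta}.
\]
Invoking Lemma~\ref{lem:lowBoundSS}, we have the uniform lower bound $\alpha_k\geq \tilde\alpha$ for all $k\in\N_0$, with $\tilde\alpha$ as in \eqref{eq:pkUpperAlphaLower}. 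Plugging this bound into the previous inequality and using $2+\beta = 1+\tfrac{1}{\nu} = \theta$ produces exactly
\[
f(x^{k+1})\leq f(x^k)-\rho\|\nabla f(x^k)\|^{\theta},
\]
with $\rho=\sigma\tilde\alpha c_1$, which is \eqref{eq:deal}. Hence the same sequence $\seq{x^k}$ can be viewed as being generated by Algorithm~\ref{alg:deal} with these parameter values, completing the proof.

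There is no real obstacle here; the main technical content (ensuring the Armijo loop terminates with a uniformly positive step-size) has already been isolated in Lemma~\ref{lem:lowBoundSS}, and the remainder is a direct bookkeeping argument identifying the exponents $2+\beta=\theta$ and the constant $\rho$.
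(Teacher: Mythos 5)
Your argument is correct and follows essentially the same route as the paper: the paper derives the Armijo-based descent inequality \eqref{eq:DealArmio1} and then \eqref{eq:DealArmio2} via the step-size lower bound of Lemma~\ref{lem:lowBoundSS} in the discussion preceding the lemma, and its proof simply records $\theta>1$, $\rho>0$, and \eqref{eq:DealArmio2}, exactly as you do.
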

\begin{proof}
From the setting of Algorithm \ref{alg:dealArmijo}, we have $\theta>1$, $\rho>0$, and 
from \eqref{eq:DealArmio2}, $x^{k+1}$ is generated such that satisfies \eqref{eq:deal}.
\qed
\end{proof}

We note that DEAL-A inherits the full convergence and complexity guarantees of DEAL as stated below.
\begin{cor}[Convergence and complexity analysis of DEAL-A]\label{cor:conDEALA}
Let Assumption \ref{ass:Holder} hold and let\linebreak $\seq{x^k}$ be generated by Algorithm \ref{alg:dealArmijo}.
Then, the results of Theorems~\ref{thm:GlobConv-s},~\ref{thm:LinConvWeakDeal-s},~\ref{thm:GlobConv},~and~\ref{thm:LinConvWeakDealFun} and Corollary~\ref{cor:onconvinglob} hold
by setting $\theta =\beta+2=1+\tfrac{1}{\nu}$, $c = c_2 \ov{\alpha}$,
and  $\rho = \sigma \tilde{\alpha}c_1$, where $\tilde{\alpha}$ is introduced in \eqref{eq:pkUpperAlphaLower}.
\end{cor}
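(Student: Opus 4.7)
The plan is to mimic the proof of Corollary~\ref{cor:conDEALc} by combining Lemma~\ref{lem:dealA:deal} with Remark~\ref{rem:cononiter}. Concretely, Lemma~\ref{lem:dealA:deal} already identifies the DEAL-A iterates as a specific realization of DEAL with $\theta = 1+\tfrac{1}{\nu}$ and $\rho = \sigma\tilde\alpha c_1$, so inequality \eqref{eq:deal} is available for free. This immediately grants the conclusions of Theorems~\ref{thm:GlobConv-s}\ref{thm:GlobConv-s:1}--\ref{thm:GlobConv-s:3} and the first part of Theorem~\ref{thm:GlobConv}, since none of those assertions depends on the step-length bound \eqref{eq:MainEquality}.

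Next, I would verify the step-length bound required by Theorem~\ref{thm:GlobConv-s}\ref{thm:GlobConv-s:4}, Theorem~\ref{thm:LinConvWeakDeal-s}, Theorem~\ref{thm:GlobConv}\ref{thm:GlobConv-4}, Corollary~\ref{cor:onconvinglob}, and Theorem~\ref{thm:LinConvWeakDealFun}. By construction, the backtracking loop of Algorithm~\ref{alg:dealArmijo} produces step-sizes $\alpha_k = \eta^{p_k}\ov\alpha \le \ov\alpha$, so the whole sequence of step-sizes is bounded above by $\alpha_{\max} = \ov\alpha$. Since the direction $d^k$ satisfies \eqref{eq:genSufDescCond} with $\beta = \tfrac{1-\nu}{\nu}$, Remark~\ref{rem:cononiter} yields
\[
\|x^{k+1}-x^k\| = \alpha_k \|d^k\| \le \ov\alpha\, c_2 \|\nabla f(x^k)\|^{1+\beta} = c_2\ov\alpha\,\|\nabla f(x^k)\|^{\theta-1},
\]
which is precisely the condition \eqref{eq:MainEquality} with $c = c_2\ov\alpha$, for every $k\in\N_0$.

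With \eqref{eq:deal} and \eqref{eq:MainEquality} simultaneously in force, the remaining assertions of Theorems~\ref{thm:GlobConv-s}, \ref{thm:LinConvWeakDeal-s}, \ref{thm:GlobConv}, and \ref{thm:LinConvWeakDealFun}, together with Corollary~\ref{cor:onconvinglob}, apply verbatim, and the specific parameters $\theta = 1+\tfrac{1}{\nu}$, $\rho = \sigma\tilde\alpha c_1$, and $c = c_2\ov\alpha$ are the ones announced in the statement. I do not expect any genuine obstacle here: the only subtle point is to notice that $\alpha_k \le \ov\alpha$ automatically (because backtracking can only shrink the step), which is precisely what allows Remark~\ref{rem:cononiter} to be invoked globally rather than only for large $k$. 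The proof therefore reduces to citing Lemma~\ref{lem:dealA:deal} and Remark~\ref{rem:cononiter} with $\alpha_{\max} = \ov\alpha$.
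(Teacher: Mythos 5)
Your proposal is correct and follows essentially the same route as the paper, which proves this corollary by citing Lemma~\ref{lem:dealA:deal} together with Remark~\ref{rem:cononiter} with $\alpha_{\max}=\ov\alpha$; your write-up simply makes explicit the observation that backtracking forces $\alpha_k\le\ov\alpha$, which the paper leaves implicit.
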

\begin{proof}
The result follows from Lemma~\ref{lem:dealA:deal} and Remark~\ref{rem:cononiter} with $\alpha_{\max}=\ov\alpha$. \qed
\end{proof}

\subsection{{\bf DEAL in the Nonsmooth Setting}}\label{sub:fben} 
In DEAL and its two instances, DEAL-C and DEAL-A, the objective function $f$ in problem~\eqref{eq:prob} is assumed to be smooth. 
We now extend the framework to nonsmooth settings by considering a proper lsc function 
$\varphi: \R^n \to \overline{\R}$ and the problem
\begin{equation}\label{eq:nonprob}
\min_{x \in \mathbb{R}^n} \varphi(x).
\end{equation}
A prominent smoothing approach to handle this problem relies on 
the \textit{high-order proximal-point operator} and \textit{high-order Moreau envelope} (HOME) defined as
\[
\mathrm{prox}_{\gamma \varphi}^p (x):=\arg\min_{y\in \R^n} \left\{\varphi(y)+\tfrac{1}{p\gamma}\Vert x- y\Vert^p\right\},
\qquad \varphi_{\gamma}^p (x):=\min_{y\in \R^n} \left\{\varphi(y)+\tfrac{1}{p\gamma}\Vert x- y\Vert^p\right\},
\]
where $p>1$ is the order of the regularization term, and $\gamma>0$ is a proximal parameter. 
If $p=2$, these notions are known as the \textit{proximal operator} and the \textit{Moreau envelope} \cite{moreau1965proximite}.

The following fact collects basic properties essential for our analysis.

\begin{fact}\label{fact:basicprophome}\cite{Kabgani24itsopt,Kabganidiff}
Let $p>1$ and $\varphi: \mathbb{R}^n \to \overline{\mathbb{R}}$ be a proper lsc function and bounded below. The following assertions hold:
\begin{enumerate}
    \item $\mathrm{prox}_{\gamma \varphi}^p (x)$ is nonempty and compact and $\varphi_{\gamma}^p (x)$ is finite for every $x\in \R^n$ and $\gamma>0$;
    \item For each $\gamma>0$, $\inf_{y\in \R^n}  \varphi_{\gamma}^p(y)=\inf_{y\in \R^n} \varphi(y)$ and
    $\arg\min_{y\in \R^n} \varphi_{\gamma}^p(y)= \arg\min_{y\in \R^n} \varphi(y)$;
    \item for each open subset $U \subseteq \R^n$,  $\varphi_{\gamma}^p \in \mathcal{C}^{1}(U)$ if and only if 
    $\mathrm{prox}_{\gamma \varphi}^p$ is nonempty, single-valued, and continuous on $U$. Moreover, under these conditions, for any $x \in U$, we have 
    \begin{equation}\label{eq:gardhome}
    \nabla\varphi_{\gamma}^p(x) = \tfrac{1}{\gamma} \Vert x - \mathrm{prox}_{\gamma \varphi}^p(x) \Vert^{p-2} (x - \mathrm{prox}_{\gamma \varphi}^p(x)).
    \end{equation}
\end{enumerate}
\end{fact}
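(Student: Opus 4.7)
The plan is to address the three assertions in sequence, with (c) being the main technical hurdle. The approach leverages classical Weierstrass and envelope arguments together with the homeomorphism property of the high-order duality map $v\mapsto\|v\|^{p-2}v$.

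For (a), fix $x\in\R^n$ and $\gamma>0$, and set $\psi(y):=\varphi(y)+\tfrac{1}{p\gamma}\|x-y\|^p$. Since $\varphi$ is proper lsc and bounded below and the kernel $y\mapsto\tfrac{1}{p\gamma}\|x-y\|^p$ is continuous and coercive for $p>1$, the function $\psi$ is proper, lsc, and coercive. Weierstrass' theorem then yields attainment of the minimum and finiteness of $\varphi_\gamma^p(x)$, while compactness of $\mathrm{prox}_{\gamma\varphi}^p(x)$ follows by intersecting the closed sublevel set $\{\psi\le\varphi_\gamma^p(x)\}$ with a sufficiently large ball supplied by coercivity.

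For (b), I use two sandwich inequalities. Choosing the feasible test point $z=y$ in the definition of $\varphi_\gamma^p(y)$ yields $\varphi_\gamma^p(y)\le\varphi(y)$, hence $\inf\varphi_\gamma^p\le\inf\varphi$; conversely, $\varphi(z)+\tfrac{1}{p\gamma}\|y-z\|^p\ge\inf\varphi$ for all $y,z$, so taking the infimum in $z$ gives $\varphi_\gamma^p(y)\ge\inf\varphi$. For the argmin equality, if $y^*\in\arg\min\varphi$, then $\varphi_\gamma^p(y^*)\le\varphi(y^*)=\inf\varphi=\inf\varphi_\gamma^p$; conversely, for $y^*\in\arg\min\varphi_\gamma^p$ and any $z^*\in\mathrm{prox}_{\gamma\varphi}^p(y^*)$, the identity $\varphi(z^*)+\tfrac{1}{p\gamma}\|y^*-z^*\|^p=\inf\varphi$ forces $z^*=y^*$ and $\varphi(y^*)=\inf\varphi$.

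The main obstacle is (c). For the ``$\Rightarrow$'' direction, assume $\varphi_\gamma^p\in\C^1(U)$, fix $x\in U$, and pick any $y\in\mathrm{prox}_{\gamma\varphi}^p(x)$. The smooth upper model $\Phi_y(z):=\varphi(y)+\tfrac{1}{p\gamma}\|z-y\|^p$ satisfies $\Phi_y\ge\varphi_\gamma^p$ pointwise on $U$ with equality at $z=x$, so $\Phi_y-\varphi_\gamma^p$ attains its minimum at $x$, forcing $\nabla\varphi_\gamma^p(x)=\nabla\Phi_y(x)=\gamma^{-1}\|x-y\|^{p-2}(x-y)$. Since the left-hand side is independent of the choice of $y$ and the duality map $v\mapsto\gamma^{-1}\|v\|^{p-2}v$ is a homeomorphism of $\R^n$ for $p>1$, $y$ is uniquely determined by $x$, giving single-valuedness and formula \eqref{eq:gardhome}; composing $\nabla\varphi_\gamma^p$ with the continuous inverse of the duality map then yields continuity of $\mathrm{prox}_{\gamma\varphi}^p$. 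For the ``$\Leftarrow$'' direction, assume $y(x):=\mathrm{prox}_{\gamma\varphi}^p(x)$ is single-valued and continuous on $U$. Using $y(x_0)$ and $y(x_0+th)$ as test points in the infima defining $\varphi_\gamma^p(x_0+th)$ and $\varphi_\gamma^p(x_0)$, respectively, yields the sandwich
\[
\tfrac{1}{p\gamma}\bigl[\|x_0+th-y(x_0+th)\|^p-\|x_0-y(x_0+th)\|^p\bigr]\le \varphi_\gamma^p(x_0+th)-\varphi_\gamma^p(x_0)\le \tfrac{1}{p\gamma}\bigl[\|x_0+th-y(x_0)\|^p-\|x_0-y(x_0)\|^p\bigr].
\]
Dividing by $t$ and sending $t\to 0^+$, the right-hand side converges to $\gamma^{-1}\langle\|x_0-y(x_0)\|^{p-2}(x_0-y(x_0)),h\rangle$ by direct differentiation of the kernel, while the left-hand side converges to the same limit by continuity of $y(\cdot)$. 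This establishes differentiability with the announced gradient formula, and continuity of $\nabla\varphi_\gamma^p$ transfers from continuity of $y(\cdot)$. The hard part is precisely this bidirectional link in (c), and the key technical ingredient that unlocks it is the homeomorphism property of the duality map on $\R^n$, which cleanly identifies prox displacements $x-y(x)$ with gradient values.
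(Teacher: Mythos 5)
The paper does not prove this statement at all: it is imported as a Fact with a citation to \cite{Kabgani24itsopt,Kabganidiff}, so there is no internal proof to compare against, and your contribution is a self-contained derivation. Your argument is essentially sound and follows the standard envelope toolkit: coercivity of $y\mapsto\varphi(y)+\tfrac{1}{p\gamma}\|x-y\|^p$ plus lower semicontinuity gives (a); the two sandwich inequalities give (b) (including the neat observation that a minimizer of $\varphi_\gamma^p$ must coincide with its own proximal point); and for (c) the ``$\Rightarrow$'' direction via the smooth upper model $\Phi_y$ touching $\varphi_\gamma^p$ from above at $x$, combined with the fact that $v\mapsto\|v\|^{p-2}v$ is a homeomorphism of $\R^n$ for $p>1$, correctly yields formula \eqref{eq:gardhome}, single-valuedness, and continuity of the prox.

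The one spot you should tighten is the ``$\Leftarrow$'' direction of (c). As written, dividing the sandwich by $t$ and letting $t\to 0^+$ only produces one-sided directional derivatives that depend linearly on $h$; by itself this is Gateaux-type information, whereas the paper's $\C^1$ is defined through Fr\'echet differentiability, and in general linearity of directional derivatives does not imply Fr\'echet differentiability. The fix stays entirely within your argument: run the same sandwich with an arbitrary increment $x\to x_0$ (not just along a ray), apply the mean value theorem to $s\mapsto\|x_0+s(x-x_0)-y\|^p$ with $y=y(x_0)$ in the upper bound and $y=y(x)$ in the lower bound, and use continuity of $y(\cdot)$ together with continuity of $v\mapsto\|v\|^{p-2}v$ (including at $v=0$, which is where $p>1$ is needed) to see that both bounds equal $\tfrac{1}{\gamma}\langle\|x_0-y(x_0)\|^{p-2}(x_0-y(x_0)),x-x_0\rangle+o(\|x-x_0\|)$. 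This yields Fr\'echet differentiability with the announced gradient, and continuity of the gradient then follows as you say; with this adjustment the proof is complete.
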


Differentiability properties of HOME under conditions such as 
\textit{$q$-prox-regularity}, \textit{$p$-calmness}, or weak convexity have been widely studied 
\cite{Kabganitechadaptive,Kabganidiff}. 
In the convex case, $\varphi_\gamma^p$ is globally differentiable for all $p>1$ and $\gamma>0$ \cite[Chapter~12]{bauschke2017convex}, 
while in the weakly convex case, differentiability requires a sufficiently small $\gamma$ 
(e.g., restricted to a ball containing the level set $\mathcal{L}(x^0)$) \cite{Kabganitechadaptive}. 
Related results under prox-regularity appear in \cite[Theorem~4.4]{poliquin1996proxregular}. 
To avoid technicalities, we simply assume $\varphi_\gamma^p \in \C^1$ whenever differentiability is needed. 

In the remainder of this subsection, we recall two algorithms built on these smoothing tools: 
the Boosted high-order proximal-point algorithm (Boosted HiPPA) for $p>1$, the boosted proximal gradient algorithm (Boosted PGA) for $p=2$. 
We show that all these methods fit the DEAL framework, and hence their convergence, rates, 
and complexity follow directly from the results in Section~\ref{sec:genericDescMethod}. 
To keep the exposition concise, we introduce HiPPA and PGA as instances of DEAL in 
Examples~\ref{ex:PPM}~and~\ref{ex:FBM}, while restricting detailed convergence and complexity analysis to their boosted versions.

\begin{exa}[High-order proximal-point algorithm]\label{ex:PPM}
Consider problem \eqref{eq:nonprob} where $\varphi$ is proper, lsc, and bounded below. Assume that for some $\gamma>0$, $\varphi_\gamma^p \in \C^1$. Hence, from Fact~\ref{fact:basicprophome}, $\mathrm{prox}_{\gamma \varphi}^p (x)$ is nonempty and single-valued for each $x\in \R^n$. The high-order proximal-point algorithm iterates via
\[
x^{k+1}=\mathrm{prox}_{\gamma \varphi}^p (x^k).
\]
The gradient of the HOME at $x^k$ obtains from \eqref{eq:gardhome}
and the update can be rewritten as
\[
x^{k+1}=\mathrm{prox}_{\gamma \varphi}^p (x^k)= x^k  - \gamma^{\frac{1}{p-1}}\left\Vert \nabla\varphi_{\gamma}^p (x^k)\right\Vert^{\frac{2-p}{p-1}}\nabla \varphi_{\gamma}^p(x^k).
\] 
Moreover, the definition of $\varphi_\gamma$ yields the descent inequality
\[
\varphi_{\gamma}^p(x^{k+1}) \leq \varphi(x^{k+1}) = \varphi_{\gamma}^p(x^k)-\tfrac{1}{p\gamma}\Vert x^{k+1}-x^k\Vert^p,
\]
which, combined with the gradient expression, yields
\begin{equation}\label{eq:ineq:hipp}
\varphi_{\gamma}^p(x^{k+1})\leq \varphi_{\gamma}^p(x^k)-\tfrac{\gamma^{\frac{1}{p-1}}}{p}\Vert \nabla\varphi_{\gamma}^p(x^k)\Vert^{\frac{p}{p-1}}.
\end{equation}
This shows that HiPPA fits the DEAL framework with $\rho=\tfrac{\gamma^{\frac{1}{p-1}}}{p}$ and
$\theta=\tfrac{p}{p-1}$.
For $p=2$, this reduces to the classical proximal-point algorithm with $\rho=\tfrac{\gamma}{2}$ and $\theta=2$, 
giving a concrete nonsmooth instance of DEAL.\qed
\end{exa}

To improve flexibility and acceleration, we introduce the \textit{Boosted HiPPA} \cite{Kabgani24itsopt}(Algorithm~\ref{alg:boostedhippa}), which augments the update with a correction term as
\[
x^{k+1}=(1-t_k)\mathrm{prox}_{\gamma \varphi}^p (x^k)+t_k \left(x^k+d^k\right),
\]
where $d^k\in\R^n$ is a search (not necessarily descent) direction, $\sigma\in \big(0, \tfrac{1}{p\gamma}\big)$, and $t_k=\eta^m$, with $\eta\in(0,1)$ and $m\in\N_0$.  
In addition, it holds that
\[
\varphi_{\gamma}^p(\mathrm{prox}_{\gamma \varphi}^p (x^k))\leq \varphi(\mathrm{prox}_{\gamma \varphi}^p (x^k))=\varphi_{\gamma}^p(x^k)-\tfrac{1}{p\gamma}\Vert x^k - \mathrm{prox}_{\gamma \varphi}^p (x^k)\Vert^p
<\varphi_{\gamma}^p(x^k)- \sigma\gamma^{\frac{p}{p-1}}\Vert \nabla\varphi_{\gamma}^p(x^k)\Vert^{\frac{p}{p-1}}.
\]
Since $\eta^m\downarrow 0$ as $m\to \infty$, it is evident that
\[
(1-\eta^m)\mathrm{prox}_{\gamma \varphi}^p (x^k)+\eta^m(x^k+d^k)\to \mathrm{prox}_{\gamma \varphi}^p (x^k).
\]
From the continuity of $\varphi_{\gamma}^p$ (see \cite[Theorem~3.4]{Kabgani24itsopt}), there exists some  $\ov{m}_k\in \N_0$ such that by setting $t_k=\eta^{\ov{m}_k}$,
\[
\varphi_{\gamma}^p\left((1-t_k)\mathrm{prox}_{\gamma \varphi}^p (x^k)+t_k \left(x^k+d^k\right)\right)
\leq \varphi_{\gamma}^p(x^k)- \sigma\gamma^{\frac{p}{p-1}}\Vert \nabla\varphi_{\gamma}^p(x^k)\Vert^{\frac{p}{p-1}}.
\]
This aligns Boosted HiPPA with DEAL, using $\rho=\sigma\gamma^{\frac{p}{p-1}}$ and $\theta=\frac{p}{p-1}$.

\vspace{4mm}
\RestyleAlgo{boxruled}
\begin{algorithm}[H]
\DontPrintSemicolon
\textbf{Initialization:} $x^0\in\R^ n$,~ $\eta\in (0,1)$,~ $\gamma>0$,~ $\sigma\in \big(0, \tfrac{1}{p\gamma}\big)$,~ $k=0$;

    \Repeat{
        the stopping criterion holds
    }{
    Choose a direction $d^k\in \R^n$ and set $m=0$;\;
    Let $t_k\in\{\eta^m\mid m\in \N\}$ be the largest such that 
    $x^{k+1}=(1-t_k)\mathrm{prox}_{\gamma \varphi}^p (x^k)+t_k \left(x^k+d^k\right)$ satisfies
    \[
    \varphi_{\gamma}^p(x^{k+1})\leq \varphi_{\gamma}^p(x^k)-\sigma\gamma^{\frac{p}{p-1}}\Vert \nabla\varphi_{\gamma}^p(x^k)\Vert^{\frac{p}{p-1}};
    \]
    Set $k=k+1$;
    }

\caption{Boosted HiPPA (Boosted High-order proximal-point Algorithm) \label{alg:boostedhippa}}
\end{algorithm}
\vspace{4mm}

Next, we show that Boosted HiPPA can be cast as a special case of DEAL.
\begin{lem} [Boosted HiPPA as an instance of DEAL]\label{lem:BPPA:deal}
Let $\varphi_\gamma^p\in\C^1$ and $\seq{x^k}$ be generated by Algorithm \ref{alg:boostedhippa}.
Then, this sequence can be reproduced using Algorithm~\ref{alg:deal}, by setting $\rho=\sigma\gamma^{\frac{p}{p-1}}$ and $\theta=\frac{p}{p-1}$.
\end{lem}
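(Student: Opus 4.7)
The plan is to identify $f:=\varphi_\gamma^p$ as the smooth objective driving DEAL and then check the three ingredients of Algorithm~\ref{alg:deal}: the parameter constraints $\theta>1$, $\rho>0$, and the descent inequality \eqref{eq:deal}. Since $p>1$, I immediately get $\theta=\tfrac{p}{p-1}>1$, and since $\sigma,\gamma>0$ the constant $\rho=\tfrac{\sigma\gamma^{1/(p-1)}}{p}$ is positive. The hypothesis $\varphi_\gamma^p\in\mathcal{C}^1$ ensures that $f$ satisfies Assumption~\ref{ass:basic}(a) on $\R^n$ (boundedness below of $\varphi$ is implicit to make $\varphi_\gamma^p$ finite-valued via Fact~\ref{fact:basicprophome}).

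Next I would verify that the inner loop of Algorithm~\ref{alg:boostedhippa} is well-defined, i.e., that a finite $\ov m_k$ producing an admissible $\kappa_k=\eta^{\ov m_k}$ always exists. This is precisely the continuity argument sketched immediately before the algorithm: the definition of the high-order Moreau envelope combined with Fact~\ref{fact:basicprophome} gives the strict inequality
\[
\varphi_\gamma^p\bigl(\mathrm{prox}_{\gamma\varphi}^p(x^k)\bigr)\le \varphi_\gamma^p(x^k)-\tfrac{1}{p\gamma}\Vert x^k-\mathrm{prox}_{\gamma\varphi}^p(x^k)\Vert^p <\varphi_\gamma^p(x^k)-\tfrac{\sigma\gamma^{1/(p-1)}}{p}\Vert\nabla\varphi_\gamma^p(x^k)\Vert^{p/(p-1)},
\]
where the last step uses $\sigma<\tfrac{1}{p\gamma}$ together with the gradient formula \eqref{eq:gardhome}. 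Because $(1-\eta^m)\mathrm{prox}_{\gamma\varphi}^p(x^k)+\eta^m(x^k+d^k)\to \mathrm{prox}_{\gamma\varphi}^p(x^k)$ as $m\to\infty$, continuity of $\varphi_\gamma^p$ transfers this strict inequality to all sufficiently small $\kappa_k=\eta^m$, so backtracking terminates.

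With well-definedness established, the descent inequality enforced in Step~4 of Algorithm~\ref{alg:boostedhippa},
\[
\varphi_\gamma^p(x^{k+1})\le \varphi_\gamma^p(x^k)-\tfrac{\sigma\gamma^{1/(p-1)}}{p}\Vert\nabla\varphi_\gamma^p(x^k)\Vert^{p/(p-1)},
\]
is precisely \eqref{eq:deal} for $f=\varphi_\gamma^p$ with the claimed $\rho$ and $\theta$. Therefore $\seq{x^k}$ can be viewed as a sequence generated by DEAL applied to $\varphi_\gamma^p$, completing the proof.

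The only potentially delicate point is the well-definedness of the backtracking, and even this is mild because the two key facts, namely the strict descent produced by a full proximal step and the continuity of $\varphi_\gamma^p$, are already in hand. All other steps amount to matching constants, so I do not anticipate a substantive obstacle.
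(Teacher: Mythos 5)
Your proposal is correct and follows essentially the same route as the paper: the paper's proof simply notes that $\theta=\tfrac{p}{p-1}>1$, $\rho=\tfrac{\sigma\gamma^{1/(p-1)}}{p}>0$, and that the descent inequality enforced in the backtracking step of Algorithm~\ref{alg:boostedhippa} is exactly \eqref{eq:deal} applied to $f=\varphi_\gamma^p$. Your additional verification that the inner loop terminates is the same continuity argument the paper presents in the text immediately preceding Algorithm~\ref{alg:boostedhippa}, so it is a welcome inclusion rather than a departure.
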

\begin{proof}
From the setting of Algorithm \ref{alg:boostedhippa}, we have $\theta>1$, $\rho>0$, and 
from \eqref{eq:ineq:BPG}, $x^{k+1}$ is generated such that satisfies \eqref{eq:deal}.
\qed
\end{proof}

As a direct consequence, Boosted HiPPA inherits the full convergence and complexity guarantees of DEAL.
\begin{cor}[Convergence and complexity analysis of  Boosted HiPPA]\label{cor:conBPPA}
Let $\varphi$ satisfy the KL inequality at a reference point with exponent $\vartheta$, 
and assume $\varphi_\gamma^p\in\C^1$ with $p=\tfrac{1}{1-\vartheta}$. 
If $\seq{x^k}$ is generated by Algorithm~\ref{alg:boostedhippa}, then the results of 
Theorems~\ref{thm:GlobConv-s},~\ref{thm:LinConvWeakDeal-s},~\ref{thm:GlobConv},~and~\ref{thm:LinConvWeakDealFun} and Corollary~\ref{cor:onconvinglob} hold
by setting $\rho=\sigma\gamma^{\frac{p}{p-1}}$ and $\theta=\frac{p}{p-1}$.
\end{cor}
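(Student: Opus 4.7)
The plan is to reduce the statement to the DEAL analysis applied to the smooth envelope $f:=\varphi_\gamma^p\in\C^1$. By Lemma~\ref{lem:BPPA:deal}, the sequence $\seq{x^k}$ produced by Algorithm~\ref{alg:boostedhippa} is also generated by Algorithm~\ref{alg:deal} with $\rho=\tfrac{\sigma\gamma^{1/(p-1)}}{p}$ and $\theta=\tfrac{p}{p-1}$. The calibrated choice $p=\tfrac{1}{1-\vartheta}$ gives $\theta=\tfrac{1}{\vartheta}$, which places us exactly in the sharp regime $\theta=1/\vartheta$ of Theorem~\ref{thm:LinConvWeakDeal-s}, where linear convergence is achieved. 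Consequently, once two prerequisites are verified, namely a KL inequality for $\varphi_\gamma^p$ (with the same exponent $\vartheta$) at the relevant cluster point and the iterate displacement bound $\|x^{k+1}-x^k\|\le c\|\nabla\varphi_\gamma^p(x^k)\|^{\theta-1}$, Theorems~\ref{thm:GlobConv-s},~\ref{thm:LinConvWeakDeal-s},~\ref{thm:GlobConv},~\ref{thm:LinConvWeakDealFun} and Corollary~\ref{cor:onconvinglob} transfer verbatim, with $f^*=\inf\varphi_\gamma^p=\inf\varphi$ by Fact~\ref{fact:basicprophome}.

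For the KL transfer from $\varphi$ to $\varphi_\gamma^p$, I would combine the optimality condition of the proximal subproblem, $\gamma^{-1}\|x-\mathrm{prox}_{\gamma\varphi}^p(x)\|^{p-2}(x-\mathrm{prox}_{\gamma\varphi}^p(x))\in\partial\varphi(\mathrm{prox}_{\gamma\varphi}^p(x))$, with the identity~\eqref{eq:gardhome} to see that $\nabla\varphi_\gamma^p(x)\in\partial\varphi(\mathrm{prox}_{\gamma\varphi}^p(x))$. Writing $\varphi_\gamma^p(x)=\varphi(\mathrm{prox}_{\gamma\varphi}^p(x))+\tfrac{1}{p\gamma}\|x-\mathrm{prox}_{\gamma\varphi}^p(x)\|^p$, the excess $\varphi_\gamma^p(x)-\varphi^*$ decomposes into $\varphi(\mathrm{prox}_{\gamma\varphi}^p(x))-\varphi^*$ plus a remainder of order $\|\nabla\varphi_\gamma^p(x)\|^{p/(p-1)}$. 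Substituting the KL inequality of $\varphi$ at $\mathrm{prox}_{\gamma\varphi}^p(x)$ and using $\vartheta\cdot\tfrac{p}{p-1}=1$ together with Young's inequality to absorb the remainder into the gradient term then delivers the KL inequality for $\varphi_\gamma^p$ with exponent $\vartheta$ in a neighborhood of the cluster point.

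For the displacement bound, the update rule gives $x^{k+1}-x^k=(1-\kappa_k)(\mathrm{prox}_{\gamma\varphi}^p(x^k)-x^k)+\kappa_k d^k$, whose first term has norm $\gamma^{1/(p-1)}\|\nabla\varphi_\gamma^p(x^k)\|^{1/(p-1)}=\gamma^{1/(p-1)}\|\nabla\varphi_\gamma^p(x^k)\|^{\theta-1}$ by~\eqref{eq:gardhome}. Choosing $d^k$ so that $\kappa_k\|d^k\|$ is dominated by the same order of $\|\nabla\varphi_\gamma^p(x^k)\|^{\theta-1}$, which is the standard regime for boosted schemes (where $\kappa_k$ is reduced geometrically via backtracking), delivers the uniform constant $c>0$. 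With both prerequisites in force, the listed conclusions follow by substitution $f=\varphi_\gamma^p$, $\theta=\tfrac{p}{p-1}$, $\rho=\tfrac{\sigma\gamma^{1/(p-1)}}{p}$ into the DEAL theorems.

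The main obstacle will be the rigorous high-order KL transfer: while the $p=2$ case is classical Moreau-envelope calculus, the general $p>1$ case relies on the nonlinear gradient formula~\eqref{eq:gardhome} and implicitly on the weak-convexity or prox-regularity framework recalled before Example~\ref{ex:PPM}, which guarantees single-valuedness and continuity of $\mathrm{prox}_{\gamma\varphi}^p$ near the cluster point and hence validates the subdifferential inclusion used above.
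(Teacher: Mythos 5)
Your proposal is correct in outline, but it takes a genuinely different route from the paper on the one nontrivial ingredient. The paper's proof is essentially a citation: it combines Lemma~\ref{lem:BPPA:deal} (Boosted HiPPA is a DEAL instance with $\rho=\tfrac{\sigma\gamma^{1/(p-1)}}{p}$, $\theta=\tfrac{p}{p-1}$) with the external result \cite[Theorem~3.9]{Kabgani24itsopt}, which supplies the KL property of the high-order Moreau envelope $\varphi_\gamma^p$ with the calibrated exponent, and then invokes the DEAL theorems. You instead prove the KL transfer yourself: the inclusion $\nabla\varphi_\gamma^p(x)\in\partial\varphi(\mathrm{prox}_{\gamma\varphi}^p(x))$ from the prox optimality condition and \eqref{eq:gardhome}, the value decomposition $\varphi_\gamma^p(x)-\varphi^*=\bigl(\varphi(\mathrm{prox}_{\gamma\varphi}^p(x))-\varphi^*\bigr)+\tfrac{1}{p\gamma}\|x-\mathrm{prox}_{\gamma\varphi}^p(x)\|^p$, and the exponent bookkeeping $\vartheta\,\tfrac{p}{p-1}=1$ to absorb the residual of order $\|\nabla\varphi_\gamma^p(x)\|^{p/(p-1)}$. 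This is the standard envelope-KL argument (the $p=2$ analogue is exactly what the paper cites from \cite{yu2022kurdyka} for Boosted PGA), and it has the merit of making transparent why $p=\tfrac{1}{1-\vartheta}$ is the right calibration; the cost is that the local bookkeeping you gloss over — that $\mathrm{prox}_{\gamma\varphi}^p(x)$ stays in the KL neighborhood of the reference point, that the value range condition $\varphi(\ov x)<\varphi(\mathrm{prox}_{\gamma\varphi}^p(x))<\varphi(\ov x)+\xi$ is either satisfied or the residual term alone suffices, and that $\varphi_\gamma^p(\ov x)=\varphi(\ov x)$ at the cluster point (true because cluster points are critical for the envelope, hence fixed points of the prox) — is precisely what the cited theorem packages up.

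One caveat: your verification of the displacement bound $\|x^{k+1}-x^k\|\le c\|\nabla\varphi_\gamma^p(x^k)\|^{\theta-1}$ is not delivered by the algorithm as stated. The backtracking in Algorithm~\ref{alg:boostedhippa} only enforces the decrease condition; it does not force $\kappa_k\|d^k\|$ to be of order $\|\nabla\varphi_\gamma^p(x^k)\|^{1/(p-1)}$, so your "choosing $d^k$ so that $\kappa_k\|d^k\|$ is dominated" is an added hypothesis, not a consequence. This is not a gap relative to the paper — the displacement condition appears as an explicit hypothesis inside Theorems~\ref{thm:GlobConv-s}\ref{thm:GlobConv-s:4}, \ref{thm:LinConvWeakDeal-s}, \ref{thm:GlobConv}\ref{thm:GlobConv-4}, \ref{thm:LinConvWeakDealFun} and Corollary~\ref{cor:onconvinglob}, and the paper does not verify it for Boosted HiPPA either (contrast with Corollaries~\ref{cor:conDEALc} and \ref{cor:conDEALA}, where $c$ is supplied via Remark~\ref{rem:cononiter}) — but you should state it as an assumption on the directions rather than as something the boosted scheme provides automatically.
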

\begin{proof}
The claim follows from Lemma~\ref{lem:BPPA:deal} and \cite[Theorem~3.9]{Kabgani24itsopt}.\qed
\end{proof}

Let us consider the composite optimization problem 
\begin{equation}\label{eq:probfb}
    \min_{x\in\R^n}~\varphi(x):=f(x)+g(x),
\end{equation}
where $f:\R^n\to\R$ is continuously differentiable with $L$-Lipschitz continuous gradient and $g:\R^n\to\Rinf$ is a proper lsc function (possibly nonsmooth) that is bounded below. The forward-backward operator for $\varphi$ is defined as
\[
T_\gamma(x) := \arg\min_{y \in \mathbb{R}^n} \left\{ f(x) + \langle \nabla f(x), y - x \rangle + \tfrac{1}{2\gamma} \| y - x \|^2 + g(y) \right\}.
\]
It is straightforward that $T_\gamma(x) = \mathrm{prox}_{\gamma g}^2 (x - \gamma \nabla f(x))$. 
As discussed in Fact~\ref{fact:basicprophome}, for $p=2$, the operator $T_\gamma(x)$ is single-valued under suitable conditions on $g$, such as convexity or weak convexity, for an appropriate choice of $\gamma$. If $g$ is bounded below and $f$ is twice continuously differentiable, additional assumptions on $g$, such as ($\varrho$-weak) convexity \cite[Theorem~2.6]{stella2017forward} or prox-regularity \cite[Theorem~4.7]{themelis2018forward}, guarantee that the \textit{forward–backward envelope} (FBE) with parameter $\gamma>0$,  
\begin{equation}\label{eq:fbeq}
\varphi_\gamma(x) := \inf_{y \in \mathbb{R}^n} \left\{ f(x) + \langle \nabla f(x), y - x \rangle + \tfrac{1}{2\gamma} \| y - x \|^2 + g(y) \right\},
\end{equation}
is continuously differentiable (possibly for restricted ranges of $\gamma$). Its gradient is given by
\begin{equation}\label{eq:ex:FBM:nab}
\nabla \varphi_\gamma(x) = \tfrac{1}{\gamma} \left( I - \gamma \nabla^2 f(x) \right) \left( x - T_\gamma(x) \right),
\end{equation}
and moreover \cite[Proposition~4.3]{themelis2018forward},
\begin{equation}\label{eq:ex:FBM:ineq}
\varphi_\gamma(T_\gamma(x)) \leq \varphi(T_\gamma(x)) \leq \varphi_\gamma(x) - \tfrac{1 - \gamma L}{2\gamma} \| x - T_\gamma(x) \|^2.
\end{equation}
With these preliminaries, we now recall the proximal gradient algorithm.

\begin{exa}[Proximal gradient algorithm]\label{ex:FBM}
Consider problem \eqref{eq:probfb}. Let $f$ be twice differentiable with an $L$-Lipschitz continuous gradient and $g$ proper, lsc, and bounded below, 
and assume $\varphi_\gamma\in\C^1$. 
The proximal gradient method iterates
\[
x^{k+1} := T_\gamma(x^k)=\mathrm{prox}_{\gamma g}^2 \left( x^k - \gamma \nabla f(x^k) \right).
\]
Since $\Vert \nabla^2 f(x) \Vert \leq L$ \cite[Lemma~1.2.2]{nesterov2018lectures}, the eigenvalues $\lambda_i$ of $\nabla^2 f(x)$ lie in $[-L, L]$. Defining $A = I - \gamma \nabla^2 f(x)$, the eigenvalues of $A$ are $1 - \gamma \lambda_i \in [1 - \gamma L, 1 + \gamma L]$. By choosing $\gamma < \tfrac{1}{L}$, we ensure $1 - \gamma L > 0$, and thus \eqref{eq:ex:FBM:nab} leads to
\begin{equation}\label{eq:ex:FBM:a}
\| \nabla \varphi_\gamma(x) \| \leq \tfrac{1 + \gamma L}{\gamma} \| x - T_\gamma(x) \|.
\end{equation}
Together with \eqref{eq:ex:FBM:ineq}, this yields the descent inequality
\[
\varphi_\gamma(T_\gamma(x)) \leq \varphi_\gamma(x) - \tfrac{1 - \gamma L}{2\gamma} \| x - T_\gamma(x) \|^2 \leq \varphi_\gamma(x) - \tfrac{\gamma (1 - \gamma L)}{2(1 + \gamma L)^2} \| \nabla \varphi_\gamma(x) \|^2.
\]
Thus, PGA fits the DEAL framework with $\rho=\tfrac{\gamma (1 - \gamma L)}{2(1 + \gamma L)^2}$ and $\theta=2$.\qed
\end{exa}

As shown in Example~\ref{ex:FBM}, if $\varphi_\gamma$ given in \eqref{eq:fbeq} is continuously differentiable, then
\eqref{eq:ex:FBM:a} is valid.
Moreover, \cite[Lemma~5.1]{themelis2018forward} shows that by selecting a direction $d^k\in\R^n$ and $\gamma<\tfrac{1}{L}$, one can find a step-size $t_k>0$ such that with $x^{k+1}=T_\gamma(x^k)+t_k d^k$,
\[
\varphi_\gamma(x^{k+1})\leq \varphi_\gamma(x^k)-\tfrac{\sigma}{\gamma^2} \Vert x^k - T_\gamma(x^k)\Vert^2,
\]
for some $\sigma \in (0,\tfrac{\gamma(1-\gamma L)}{2})$. 
Hence, we have
\begin{equation}\label{eq:ineq:BPG}
   \varphi_\gamma(x^{k+1})\leq \varphi_\gamma(x^k)-\tfrac{\sigma}{\left(1+\gamma L\right)^2} \Vert\nabla \varphi_\gamma(x^k)\Vert^2, 
\end{equation}
which again corresponds to DEAL with $\rho=\tfrac{\sigma}{(1+\gamma L)^2}$ and $\theta=2$. 
This boosted variant, which can be viewed as a special instance of the ZeroFPR algorithm \cite{themelis2018forward}, is summarized in Algorithm~\ref{alg:zeroFPR}.

\vspace{4mm}
\RestyleAlgo{boxruled}
\begin{algorithm}[H]
\DontPrintSemicolon
\textbf{Initialization:} $x^0\in\R^ n$,~ $\ov\alpha\in (0,1)$,~ $0<\gamma<\frac{1}{L}$,~ $\sigma\in \big(0, \tfrac{\gamma(1-\gamma L)}{2}\big)$,~ $k=0$;

    \Repeat{
        the stopping criterion holds
    }{
    Choose a direction $d^k\in \R^n$;\;
    Let $t_k\in\{\ov\alpha^m\mid m\in \N\}$ be the largest such that $x^{k+1}=T_\gamma(x^k)+t_k d^k$ satisfies
    \[
    \varphi_\gamma(x^{k+1})\leq \varphi_\gamma(x^k)-\tfrac{\sigma}{\left(1+\gamma L\right)^2} \Vert\nabla \varphi_\gamma(x^k)\Vert^2;
    \]
    Set $k=k+1$;
    }

\caption{Boosted PGA (Boosted Proximal Gradient Algorithm) \label{alg:zeroFPR}}
\end{algorithm}
\vspace{4mm}

Let us first show that Boosted PGA fits within the general DEAL framework through suitable choices of $\theta$ and $\rho$.
\begin{lem}[Boosted PGA as an instance of DEAL]\label{lem:dealBPG:deal}
Let $\varphi_\gamma\in\C^1$ and $\seq{x^k}$ be generated by Algorithm \ref{alg:zeroFPR}.
Then, this sequence can be reproduced by Algorithm~\ref{alg:deal}, by setting $\theta =2$
and $\rho=\frac{\sigma}{\left(1+\gamma L\right)^2}$.
\end{lem}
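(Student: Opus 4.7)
The plan is to verify the three defining conditions of the DEAL framework for the sequence produced by Algorithm~\ref{alg:zeroFPR}, mirroring the brief proofs of Lemma~\ref{lem:dealc:deal}, Lemma~\ref{lem:dealA:deal}, and Lemma~\ref{lem:BPPA:deal}. The key identification, as in Example~\ref{ex:FBM}, is that since $\varphi$ itself is nonsmooth, DEAL is to be applied to the smooth surrogate $\varphi_\gamma$ rather than to $\varphi$; under the standing hypothesis $\varphi_\gamma\in\C^1$, this surrogate is a legitimate $\C^1$ objective to which Algorithm~\ref{alg:deal} applies.

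With this in mind, I would proceed in three steps. First, I would observe that $\theta=2>1$ and, since $\sigma>0$, $\gamma>0$ and $L\geq 0$, also $\rho=\sigma/(1+\gamma L)^2>0$, so the parameter requirements of Algorithm~\ref{alg:deal} are met. Second, I would read off the acceptance rule of the repeat loop in Algorithm~\ref{alg:zeroFPR}: the step-size $\alpha_k$ is chosen precisely so that the resulting iterate $x^{k+1}=T_\gamma(x^k)+\alpha_k d^k$ fulfils inequality~\eqref{eq:ineq:BPG}. Rewritten with the proposed $\theta$ and $\rho$, this is exactly the DEAL descent condition~\eqref{eq:deal} applied to the objective $\varphi_\gamma$.

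The only delicate point, and the step I would want to argue carefully, is that the backtracking inner loop actually terminates, so that such an $\alpha_k$ exists at every iteration. This follows from the discussion preceding the algorithm: combining~\eqref{eq:ex:FBM:a} with~\eqref{eq:ex:FBM:ineq}, together with the admissible range $\sigma\in(0,\gamma(1-\gamma L)/2)$, shows that~\eqref{eq:ineq:BPG} already holds at $\alpha_k=0$, i.e.\ for the pure forward--backward step $x^{k+1}=T_\gamma(x^k)$. Continuity of $\varphi_\gamma$ then ensures the inequality survives for all sufficiently small $\alpha_k>0$, so some $\ov\alpha^m$ is eventually accepted, as invoked via \cite[Lemma~5.1]{themelis2018forward}. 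Once existence of $\alpha_k$ is secured, the three DEAL conditions are in place and the lemma follows immediately, exactly as in the proof of Lemma~\ref{lem:BPPA:deal}.
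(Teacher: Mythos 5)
Your proposal is correct and follows essentially the same route as the paper: the lemma is proved by noting $\theta=2>1$, $\rho=\sigma/(1+\gamma L)^2>0$, and that the acceptance test in Algorithm~\ref{alg:zeroFPR} is exactly \eqref{eq:ineq:BPG}, i.e.\ condition \eqref{eq:deal} for the smooth surrogate $\varphi_\gamma$. Your additional care about termination of the backtracking (existence of $\alpha_k$) is handled in the paper not inside the proof but in the discussion preceding Algorithm~\ref{alg:zeroFPR} via \cite[Lemma~5.1]{themelis2018forward}, so it is consistent with, rather than a departure from, the paper's argument.
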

\begin{proof}
From the setting of Algorithm \ref{alg:zeroFPR}, we have $\theta>1$, $\rho>0$, and 
from \eqref{eq:ineq:BPG}, $x^{k+1}$ is generated such that satisfies \eqref{eq:deal}.
\qed
\end{proof}

Boosted PGA inherits the full convergence and complexity guarantees of DEAL as stated in the following.

\begin{cor}[Convergence and complexity analysis of Boosted PGA]\label{cor:conDEALBPG}
Let the function $f$ be twice continuously differentiable with $L$-Lipschitz gradient.
Let $\varphi$ satisfy the KL inequality at a reference point with exponent $\vartheta\in [\frac{1}{2}, 1)$ and
$\varphi_\gamma\in\C^1$ with $\gamma\in (0, L^{-1})$. If $\seq{x^k}$ is generated by Algorithm \ref{alg:zeroFPR}, then the results of Theorems~\ref{thm:GlobConv-s},~\ref{thm:LinConvWeakDeal-s},~\ref{thm:GlobConv},~and~\ref{thm:LinConvWeakDealFun} and Corollary~\ref{cor:onconvinglob} hold
by setting $\theta =2$
and $\rho=\frac{\sigma}{\left(1+\gamma L\right)^2}$.
\end{cor}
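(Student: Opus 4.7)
The plan is to reduce Corollary~\ref{cor:conDEALBPG} to the abstract DEAL theory by viewing Boosted PGA as an instance of DEAL applied to the \emph{forward-backward envelope} $\varphi_\gamma$ in place of the nonsmooth composite $\varphi$. Lemma~\ref{lem:dealBPG:deal} already establishes this identification with $\theta=2$ and $\rho=\sigma/(1+\gamma L)^2$. Given this identification, Theorems~\ref{thm:GlobConv-s},~\ref{thm:LinConvWeakDeal-s},~\ref{thm:GlobConv},~\ref{thm:LinConvWeakDealFun} and Corollary~\ref{cor:onconvinglob} will apply verbatim to $\varphi_\gamma$; their conclusions then transfer back to the original objective through Fact~\ref{fact:basicprophome}(2), which asserts that the infima and minimizer sets of $\varphi_\gamma$ and $\varphi$ coincide for every $\gamma>0$.

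Two ingredients remain to be verified. The first is that the KL inequality for $\varphi$ with exponent $\vartheta$ carries over to $\varphi_\gamma$ with the same exponent at the corresponding reference point; this is the $p=2$ counterpart of the role played by \cite[Theorem~3.9]{Kabgani24itsopt} in the proof of Corollary~\ref{cor:conBPPA} and can be supplied by a forward-backward envelope KL-calculus of the type developed in \cite{themelis2018forward}. The hypothesis $\vartheta\in[\tfrac{1}{2},1)$ ensures the compatibility $\theta=2\geq 1/\vartheta$ demanded by Remark~\ref{rem:cononthet}: for $\vartheta=\tfrac{1}{2}$ (the PL regime) one is in the equality case $\theta=1/\vartheta$ and the linear branch Theorem~\ref{thm:LinConvWeakDeal-s}~\ref{thm:LinConvWeakDealFun-s:1} fires, while for $\vartheta\in(\tfrac{1}{2},1)$ the strict inequality activates the sublinear branch Theorem~\ref{thm:LinConvWeakDeal-s}~\ref{thm:LinConvWeakDealFun-s:2}.

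The second ingredient is the iterate bound $\|x^{k+1}-x^k\|\leq c\|\nabla\varphi_\gamma(x^k)\|$ required by Theorem~\ref{thm:GlobConv-s}~\ref{thm:GlobConv-s:4} and the linear-rate and complexity results built on top of it. Writing $x^{k+1}-x^k=(T_\gamma(x^k)-x^k)+\alpha_k d^k$ and inverting the identity \eqref{eq:ex:FBM:nab}, which is admissible since $\gamma L<1$ and hence the eigenvalues of $I-\gamma\nabla^2 f(x)$ lie in $[1-\gamma L,1+\gamma L]$, yields
\begin{align*}
\|x^k-T_\gamma(x^k)\|\leq \tfrac{\gamma}{1-\gamma L}\|\nabla\varphi_\gamma(x^k)\|.
\end{align*}
Combined with a natural safeguarding-type control of the form $\alpha_k\|d^k\|\leq D\|x^k-T_\gamma(x^k)\|$ on the corrective term (standard in line-search-based boosted proximal schemes), this produces the desired bound with $c=(1+D)\gamma/(1-\gamma L)$. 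The main obstacle, in my view, is the KL-transfer step: to preserve the exponent one must leverage the identity $x-T_\gamma(x)=\gamma(I-\gamma\nabla^2 f(x))^{-1}\nabla\varphi_\gamma(x)$ in order to relate $\dist(0,\partial\varphi(T_\gamma(x)))$ to $\|\nabla\varphi_\gamma(x)\|$, and then carry the KL inequality of $\varphi$ at $T_\gamma(x)$ back to $\varphi_\gamma$ at $x$; a less careful argument can produce a degraded exponent incompatible with the admissible range.
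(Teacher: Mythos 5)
Your proposal follows essentially the same route as the paper: the paper's proof is a one-liner combining Lemma~\ref{lem:dealBPG:deal} (the DEAL identification with $\theta=2$ and $\rho=\sigma/(1+\gamma L)^2$) with the KL-exponent transfer to the forward-backward envelope, for which it cites \cite[Remark~5.1(ii)]{yu2022kurdyka} --- exactly the two ingredients you single out, with the restriction $\vartheta\in[\tfrac{1}{2},1)$ playing precisely the dual role you describe (compatibility $\theta\ge 1/\vartheta$ and exponent preservation for the envelope). Your extra verification of $\|x^{k+1}-x^k\|\le c\|\nabla\varphi_\gamma(x^k)\|$ via inverting $I-\gamma\nabla^2 f$ together with a safeguard on $\alpha_k d^k$ goes beyond the paper, which simply leaves that condition as a hypothesis of the invoked theorems; also note that your appeal to Fact~\ref{fact:basicprophome} should instead be to the analogous forward-backward envelope facts in \cite{themelis2018forward}, since that fact concerns the high-order Moreau envelope rather than $\varphi_\gamma$.
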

\begin{proof}
The claim follows directly from Lemma~\ref{lem:dealBPG:deal} and \cite[Remark~5.1(ii)]{yu2022kurdyka}.\qed
\end{proof}
A key advantage of Boosted HiPPA lies in its flexibility: by tuning $p$, it achieves linear convergence for any KL exponent $\vartheta \in (0,1)$. 
Indeed, if $\varphi_{\gamma}^p$ satisfies the KL inequality with exponent $\vartheta$, choosing $\theta=\frac{1}{\vartheta}=\frac{p}{p-1}$, i.e., $p=\frac{1}{1-\vartheta}$, guarantees global and linear convergence. 
This adaptability contrasts with methods such as Boosted PGA, whose linear convergence is restricted to the case $\vartheta=\tfrac{1}{2}$.

\section{Preliminary Numerical Experiments} \label{sec:preNumExper}
This section reports numerical experiments illustrating the behavior of the proposed generalized descent
framework on representative smooth and smoothed nonsmooth objectives satisfying the Kurdyka-{\L}ojasiewicz
(KL) property. We first describe the computational environment and stopping criteria, then introduce the test
problems, and finally discuss the observed performance.

All numerical experiments were conducted on a MacBook Pro with an Apple M1 processor and 16GB RAM. The algorithms were implemented in Python 3.9 using NumPy and SciPy libraries. The termination criterion was set as \(\|\nabla f(x^k)\| \leq \varepsilon\) with \(\varepsilon = 10^{-6}\), and the maximum number of iterations was capped at 10,000.
The step-size parameters for the DEAL algorithm were selected as follows:
\begin{itemize}
    \item[$\bullet$] \textbf{Constant step-size:} \(\alpha_k = \alpha\) with \(\alpha\) chosen based on theoretical guarantees.
    \item[$\bullet$] \textbf{Armijo line search:} The initial step-size \(\overline{\alpha} = 1\), reduction factor \(\eta = 0.5\), and acceptance parameter \(\sigma = 10^{-4}\).
\end{itemize}
For all methods, initial points were randomly chosen from a uniform distribution over \([-5,5]^n\), unless otherwise specified.

\subsection{{\bf Robust Phase Retrieval}}
Let $\{a_i\}_{i=1}^m \subseteq \mathbb{R}^n$ be the sensing vectors, and let
$\{b_i\}_{i=1}^m \subseteq \mathbb{R}$ be the phaseless quadratic measurements.
We consider the problem of recovering an unknown signal $x^* \in \mathbb{R}^n$
from observations of the form
\[
b_i = \langle a_i, x^* \rangle^2, \qquad i=1,\dots,m.
\]
Such measurement models naturally arise in phase retrieval, where only the magnitudes of the linear measurements are available.

To this end, we study the following nonconvex optimization problem:
\begin{equation}\label{eq:rpr_obj}
\min_{x\in\R^n} f(x)
\;:=\;
\frac{1}{\delta}\sum_{i=1}^m
\bigl|\langle a_i , x\rangle^2 - b_i\bigr|^{\delta},
\end{equation}
where $\delta\in [1,2]$. 
The objective function $f$ is continuously differentiable and belongs to the
class $\C^{1,\nu}_{L_R}(B(0;R))$ for every $R>0$, with
$\nu = \delta-1$.
The following theorem further establishes that $f$ satisfies the
KL property in a neighborhood of the optimal solution
$x^*$.
Moreover, the associated KL exponent is given by
$\vartheta = 1 + \frac{1}{\nu}$,
which constitutes a sufficient condition for the linear convergence of
\textsc{DEAL-C} and \textsc{DEAL-A}; see Corollaries~\ref{cor:conDEALc}
and~\ref{cor:conDEALA}, as well as Theorem~\ref{thm:LinConvWeakDeal-s}. We recall that, although the case $\delta=1$ is also of practical interest in robust phase retrieval, it renders the objective function in~\eqref{eq:rpr_obj} nonsmooth. For this reason, we do not consider this case in the present experiments.
Nevertheless, the nonsmooth setting can be addressed within the proposed framework by the methods introduced in Subsection~\ref{sub:fben}.

\begin{prop}\label{prop:phase:HKL}
Let $f$ be defined by \eqref{eq:rpr_obj}. Let us assume that there exists a (global) minimizer $x^*$ satisfying the conditions
\begin{equation}\label{eq:consistency}
\langle a_i , x^*\rangle^2 = b_i,\qquad  i=1,\dots,m,
\end{equation}
and 
\begin{equation}\label{eq:nondeg}
G(x^*):=\big[\nabla q_1(x^*),\dots,\nabla q_m(x^*)\big]\in\mathbb{R}^{n\times m}
\quad\text{satisfies}\quad
\sigma_{\min}\big(G(x^*)\big)>0,
\end{equation}
where 
$q_i(x):=\langle a_i , x\rangle^2 - b_i$ and $\sigma_{\min}\big(G(x^*)\big)$ is the smallest singular value of the matrix $\big(G(x^*)\big)$.
Then the following statements hold:
\begin{enumerate}
\item \label{prop:phase:HKL:a} For every $R>0$, there exists $L_R>0$ such that
\begin{equation}\label{eq:holder_grad}
\|\nabla f(x)-\nabla f(y)\|\;\le\;L_R\|x-y\|^{\delta-1},
\qquad \forall x,y\in B(0;R).
\end{equation}
In particular, $f\in \C^{1,\nu}_{L_R}(B(0;R))$ on bounded sets with H\"older exponent $\nu=\delta-1$.

\item \label{prop:phase:HKL:b} There exist $r>0$ and $c>0$ such that
\begin{equation}\label{eq:KL_standard}
\|\nabla f(x)\|\;\ge\;c\bigl(f(x)-f(x^*)\bigr)^{\vartheta},
\qquad \forall x\in B(x^*;r),
\end{equation}
with the KL exponent $\vartheta=\frac{\delta-1}{\delta}$.

\item \label{prop:phase:HKL:c} The exponents in \eqref{eq:holder_grad}~and~\eqref{eq:KL_standard} satisfy
\begin{equation}\label{eq:identity}
\frac{1}{\vartheta}=1+\frac{1}{\nu}.
\end{equation}
\end{enumerate}
\end{prop}
\begin{proof}
    The proof is deferred to Appendix~\ref{App}.
\end{proof}

Note that Proposition~\ref{prop:phase:HKL}, along with Theorem~\ref{thm:GlobConv-s}, Remark~\ref{rem:cononiter}, and Corollary~\ref{cor:conDEALc} (resp. Corollary~\ref{cor:conDEALA}), guarantees that the sequence generated by DEAL-C (resp. DEAL-A) for solving the problem~\eqref{eq:rpr_obj} converges to the global solution $x^*$. 

In the numerical experiments, we consider a noise-free phase retrieval setting.
The ambient dimension of the unknown signal is denoted by $n$, and the number
of measurements by $m$, with $m \gg n$.
Unless otherwise stated, we fix $n=200$ and $m=5000$.
The sensing vectors $\{a_i\}_{i=1}^m$ are generated
independently from the standard Gaussian distribution, i.e.,
$a_i \sim \mathcal{N}(0,I_n)$, and are subsequently normalized to have the unit
Euclidean norm.
The ground-truth signal $x^* \in \mathbb{R}^n$ is generated independently from
$\mathcal{N}(0,I_n)$ and normalized so that $\|x^*\|_2 = 1$.
The phaseless quadratic measurements are then constructed according to
\[
b_i = \langle a_i, x^* \rangle^2, \qquad i=1,\dots,m,
\]
which ensures consistency of the data with the model \eqref{eq:rpr_obj}.
Under this construction, $x^*$ is a global minimizer of $f$ and satisfies
$q_i(x^*)=0$ for all $i$.
Moreover, since $m \gg n$, the nondegeneracy condition
$\sigma_{\min}(G(x^*))>0$ holds with high probability.
We consider the case $\delta=\frac{3}{2}$. Since Proposition~\ref{prop:phase:HKL} establishes the KL inequality only in a neighborhood of a global minimizer $x^*$, the purpose of this experiment is to illustrate the corresponding local linear convergence regime. Accordingly, the algorithm is initialized near the ground-truth solution by
\[
x^0 = x^* + 0.5\,\frac{\xi}{\Vert \xi\Vert},
\qquad
\xi \sim \mathcal{N}(0,I_n),
\]
where $\xi$ is a standard Gaussian random vector independent of $x^*$.

We implement both DEAL-C and DEAL-A for this problem. For DEAL-C, since an explicit H\"older constant of $\nabla f$ is not available in closed form for this instance, we choose the fixed step-size $\alpha$ empirically by a short warm-up search from $x_0$: starting from $\alpha_0=1$, we repeatedly halve $\alpha$ until the corresponding fixed-step trajectory is monotone over the first $K=30$ trial iterations. The selected $\alpha$ is then used in the full run, with the additional safeguard that the run is terminated if a function increase is detected. Thus, the monotonicity reported for DEAL-C is verified empirically along the computed trajectory, rather than guaranteed \textit{a priori} from an explicit H\"older constant.
For DEAL-A, we set the Armijo parameter $\sigma = 10^{-4}$, the backtracking factor $\eta = 0.8$, and the initial trial step-size $\bar{\alpha} = 1$.

Figure~\ref{fig:DEAL:ph}(a) reports the empirical ratios
\[
R_f(k):=\frac{f_{k+1}}{f_k},
\qquad
R_g(k):=\frac{\|\nabla f_{k+1}\|}{\|\nabla f_k\|},
\qquad
R_i(k):=\frac{\|x^{k+1}-x^*\|}{\|x^k-x^*\|},
\]
generated by DEAL-C. All ratios stabilize strictly below one, providing clear numerical evidence of linear convergence of the function values, gradient norms, and iterates, in agreement with the theoretical predictions under the KL property. 
DEAL-A, Figure~\ref{fig:DEAL:ph}(b) and Table~\ref{tab:dealA} show that the empirical ratios remain uniformly below~$1$, thus confirming the linear convergence. However, these ratios are consistently larger than those observed for DEAL-C, indicating a slower linear contraction rate for DEAL-A in this setting.

\begin{table}
\centering
\begin{tabular}{|c|c|c|c|c|c|c|}
\hline 
Ratio & 0 & 100 & 300 & 500 & 700 & 900 \\ \hline
$R_f$ & 9.092e-01 & 9.812e-01 & 9.777e-01 & 9.775e-01 & 9.775e-01 & 9.775e-01 \\
\hline
$R_g$ & 5.754e-01 & 9.925e-01 & 9.924e-01 & 9.924e-01 & 9.924e-01 & 9.924e-01 \\
\hline
$R_i$ & 9.404e-01 & 9.880e-01 & 9.851e-01 & 9.849e-01 & 9.849e-01 & 9.849e-01 \\
\hline
\end{tabular}\caption{ Empirical linear contraction ratios $R_f$, $R_g$, and $R_i$ for DEAL-A, reported at iterations $k = 0, 100, 300, 500, 700,$ and $900$. The values remain uniformly below~$1$, corroborating linear convergence, while their magnitude reflects a slower contraction rate compared to \textsc{DEAL-C}.}\label{tab:dealA}
\end{table}

\begin{figure}[!htbp]
    \centering
        \begin{subfigure}[t]{0.49\linewidth}
        \centering
        \includegraphics[width=\linewidth]{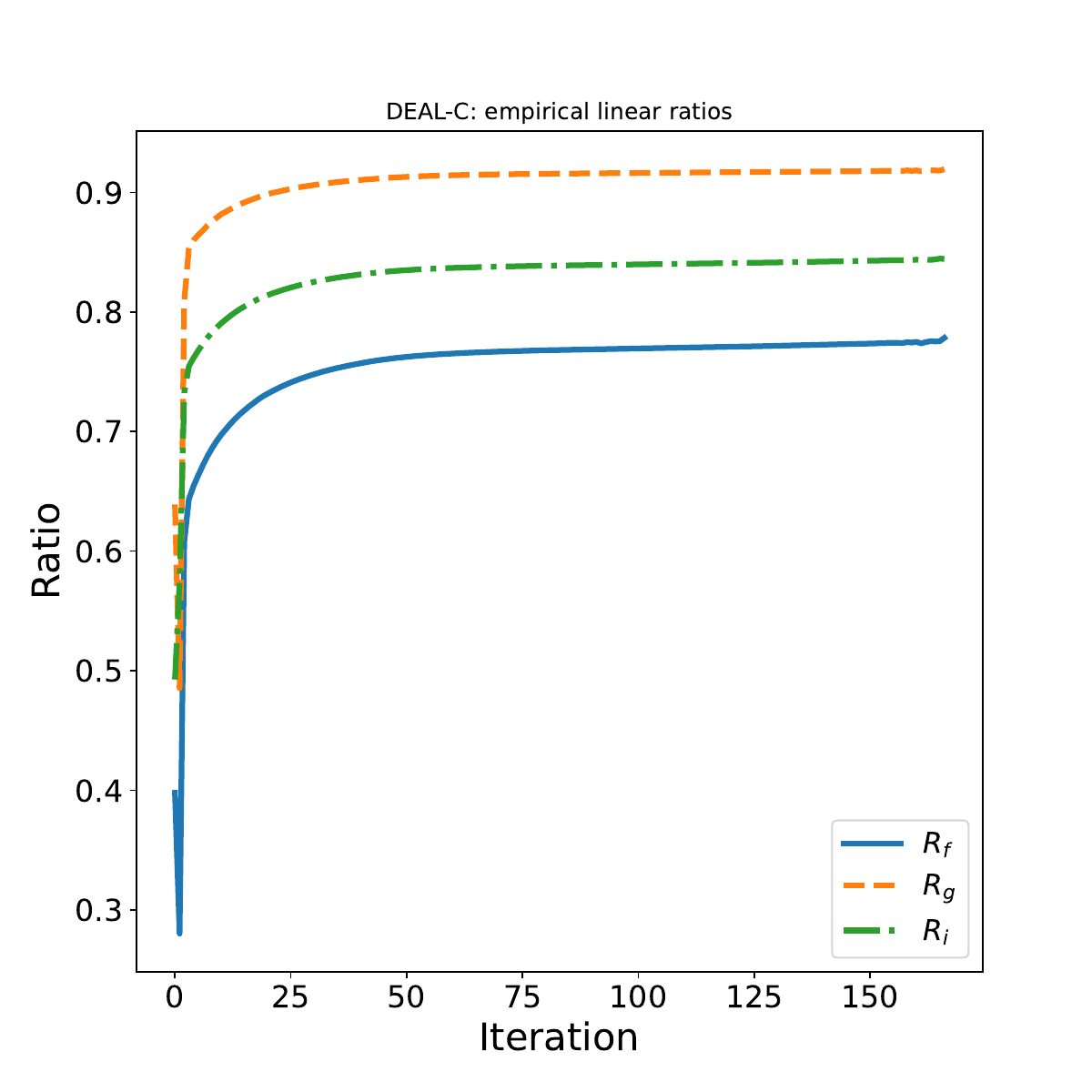}
        \caption{Empirical contraction ratios for DEAL-C.}
        \label{fig:cameraman_loss}
    \end{subfigure}
    \begin{subfigure}[t]{0.49\linewidth}
        \centering
        \includegraphics[width=\linewidth]{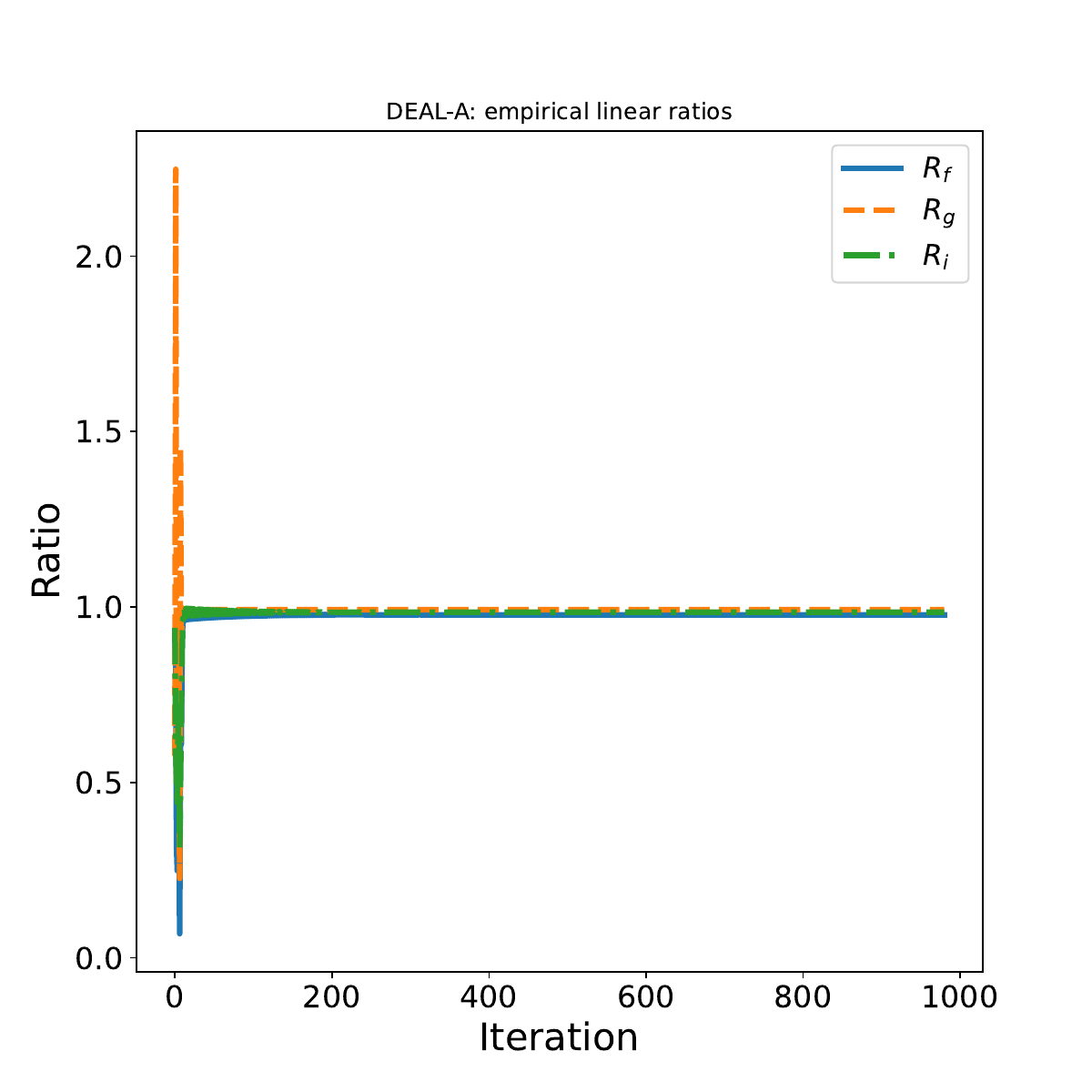}
        \caption{Empirical contraction ratios for DEAL-A.}
        \label{fig:cameraman_psnr}
    \end{subfigure}\hfill
    \caption{Empirical contraction ratios for the noise-free robust phase retrieval problem with $p=3/2$.
Subfigure~(a) corresponds to DEAL-C, 
while subfigure~(b) corresponds to DEAL-A.
In both cases, the ratios 
$R_f$, 
$R_g$, 
and $R_i$ remain strictly below~$1$ after a short transient phase, 
providing numerical evidence of linear convergence.
Compared with DEAL-C, DEAL-A exhibits larger contraction ratios, 
reflecting a slower linear convergence rate in this example. }
    \label{fig:DEAL:ph}
\end{figure}

\subsection{{\bf Linear Inverse Problem with Nonconvex Regularization}}
In numerous scientific and engineering disciplines, such as signal and image processing, machine learning, compressed sensing, geophysics, and statistics, one often encounters scenarios in which the underlying features of interest cannot be directly observed or measured. Instead, these features must be inferred indirectly through the observed data. Such problems are collectively known as \textit{inverse problems}. A particularly important subclass is that of \textit{linear inverse problems}, which arise when a linear relationship exists between the unknown variables and the observed data. Specifically, if \( b \in \mathbb{R}^m \) denotes an observation of an unknown signal \( x \in \mathbb{R}^n \), and \( A : \mathbb{R}^m \rightarrow \mathbb{R}^n \) is a known linear operator, the canonical formulation of the linear inverse problem is given by
\begin{equation}\label{eq:linInvProb1}
 b = Ax + e,
\end{equation}
where \( e \in \mathbb{R}^m \) represents additive or impulsive noise, typically modeled with limited or qualitative prior knowledge. Due to the ill-conditioning of the linear system and the presence of noise, direct inversion is usually infeasible or unstable. Consequently, one often resorts to regularized optimization techniques to obtain stable approximations of the solution. A common approach involves solving the generalized least-$\delta$ norm optimization problem defined as
\begin{equation}\label{eq:GLP-R}
    \min_{x\in\R^n}  f(x)=\tfrac{1}{\delta}\|Ax-b\|^\delta+\lambda \mathcal{R}(x).
\end{equation}
where \(A \in \mathbb{R}^{m \times n}\) is a full rank matrix with the smallest singular value $\sigma_{\min}>0$, \(b \in \mathbb{R}^m\), and \(\delta\in (1,2]\). Furthermore, \(\func{\mathcal{R}}{\mathbb{R}^n}{\mathbb{R}}\) is a sparsity-promoting regularizer, and \(\lambda > 0\) controls the trade-off between data fidelity and sparsity. Although convex regularizers such as (e.g. \(\ell_1\)- or \(\ell_2\)-norms) are more commonly considered in literature, nonconvex sparsity-promoting penalties \(\mathcal{R}\) have been shown to improve recovery accuracy; see, e.g. \cite{chen2014convergence,liu2018robust,yang2019weakly}. Motivated by this consideration, we focus on \(\mathcal{R}(x) = \sum_{i=1}^n r(x_i)\) in which the specific penalty \(\func{r}{\mathbb{R}}{\mathbb{R}}\) is defined by
\[
r(t) = 
\begin{cases}
2|t|^\delta - \delta t^{2},\hspace*{4mm} & \text{if}\hspace*{2mm} |t| \le 1,\\[2mm]
2-\delta, \hspace*{4mm} & \text{if}\hspace*{2mm} |t| > 1,
\end{cases}
\]
with $\delta\in (1,2)$. It is obvious that this function is a weakly convex function on $\R$ and is even, non-decreasing on \([0, \infty)\), not identically zero with \(r(0)=0\), and \(t \mapsto \tfrac{r(t)}{t}\) is non-increasing on \((0, \infty)\). 

Since the objective function $f$ in \eqref{eq:GLP-R} is semialgebraic, it satisfies the KL inequality. Moreover, by the following proposition, the function possesses a  $(\delta-1)$-H\"{o}lder gradient (the proof is left in the Appendix~\ref{App}). 

\begin{prop}\label{prop:invprob1}
   Let  $f$ be the function defined by \eqref{eq:GLP-R}. Then, the function $f$ has a $(\delta-1)$-H\"{o}lder gradient with constant $2^{2-\delta}\|A\|^{\delta}+2^{4-\delta} n^{1-\tfrac{\delta}{2}}p\lambda$.
\end{prop}

We next illustrate the numerical behavior of the proposed methods on the inverse problem~\eqref{eq:GLP-R}, focusing on image deblurring with additive Gaussian noise; see, e.g., \cite{jackie2007deblurring}. All experiments are conducted with
\[
\delta=\tfrac{5}{3}, \qquad \lambda = 10^{-2}, \qquad p= \tfrac{5}{3},
\]
for which the objective function
admits a \((\delta-1)=\tfrac{2}{3}\)-H\"older continuous gradient; see Proposition~\ref{prop:invprob1}.

The forward operator $A$ models spatially invariant blur and is implemented as a two-dimensional convolution with a normalized $5 \times 5$ averaging kernel using FFTs under periodic boundary conditions. Observations are generated according to~\eqref{eq:linInvProb1}, where $x^\star$ denotes the standard \textit{Cameraman} image normalized to $[0,1]$, and $e$ is zero-mean white Gaussian noise with variance chosen to achieve a blurred signal-to-noise ratio (BSNR) of $40$ dB. In this image deblurring setting, both the unknown image and the observation have the same dimensions; therefore, the initial iterate is chosen as the observed blurred/noisy image, i.e., $x^0 = b$.
In our comparison, we consider the following algorithms:
\begin{itemize}
\item[$\bullet$] \textbf{DEAL-A}: Algorithm~\ref{alg:dealConstantSS};
\item[$\bullet$] \textbf{DEAL-C}: Algorithm~\ref{alg:dealArmijo};
\item[$\bullet$] \textbf{BHiPPA~(Boosted HiPPA)}: Algorithm~\ref{alg:boostedhippa};
\item[$\bullet$] \textbf{BPGA~(Boosted PGA)}: Algorithm~\ref{alg:zeroFPR}.
\end{itemize}
All methods run for at most $300$ outer iterations under identical stopping criteria.
Performance is evaluated in terms of the objective value $f(x)$, while reconstruction quality is assessed using the peak signal-to-noise ratio (PSNR), computed with respect to the ground-truth image $x^\star$ as
\[
\mathrm{PSNR}(x,x^\star)=10\log_{10}\!\left(\frac{\mathrm{MAX}^2}{\mathrm{MSE}(x,x^\star)}\right),
\qquad
\mathrm{MSE}(x,x^\star)=\frac{1}{N^2}\,\|x-x^\star\|_F^2,
\]
where $N\times N$ is the image size and $\mathrm{MAX}=1$ since all images are normalized to $[0,1]$.

As Figure~\ref{fig:cameraman_curves}(a) shows, the loss function values exhibit a clear separation among the considered methods. While all methods work very well on the problem, DEAL-A and BHiPPA achieve the fastest decrease and stabilize earlier.
In terms of reconstruction accuracy, as in Figure~\ref{fig:cameraman_curves}(b) illustrated, BHiPPA attains a final PSNR of \(29.37\) dB, which is comparable to DEAL-A (\(29.59\) dB) and improves upon BPGA (\(28.62\) dB) and DEAL-C (\(27.14\) dB). Visual inspection of the reconstructions supports these observations.

\begin{figure}[!htbp]
    \centering
        \begin{subfigure}[t]{0.49\linewidth}
        \centering
        \includegraphics[width=\linewidth]{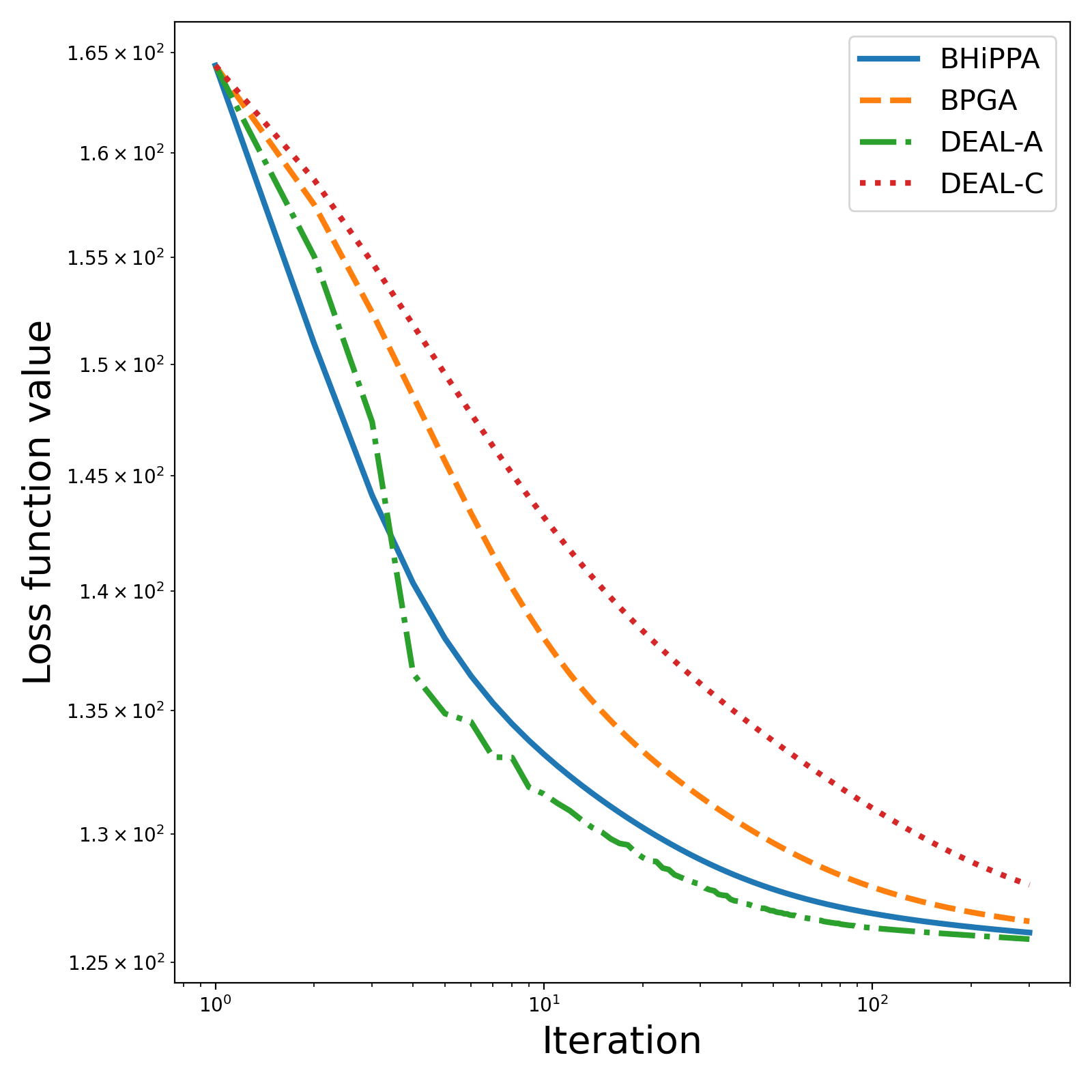}
        \caption{Loss value versus iteration (log scale).}
        \label{fig:cameraman_loss}
    \end{subfigure}
    \begin{subfigure}[t]{0.49\linewidth}
        \centering
        \includegraphics[width=\linewidth]{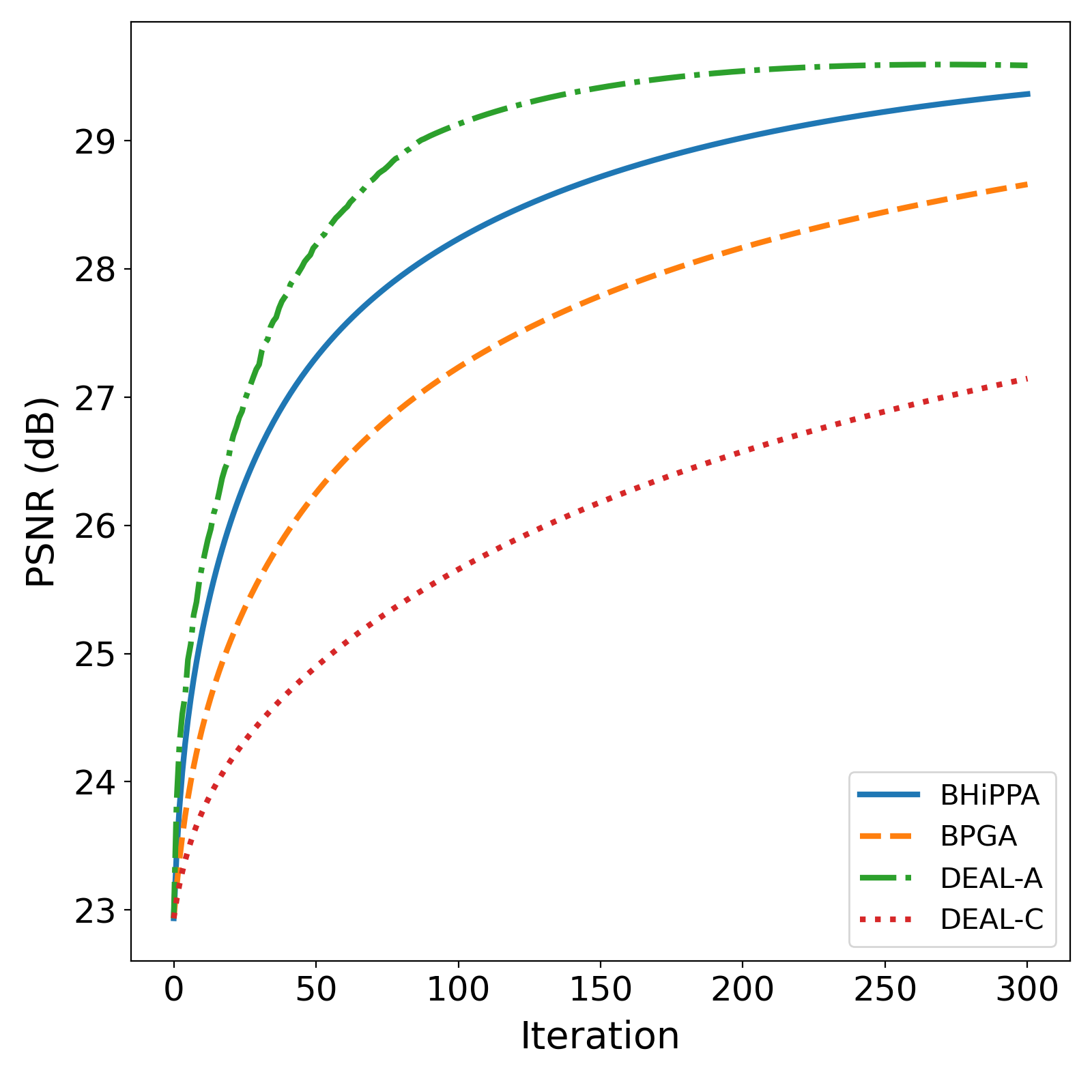}
        \caption{PSNR (dB) versus iteration.}
        \label{fig:cameraman_psnr}
    \end{subfigure}\hfill
    \caption{Cameraman deblurring.
(a) Objective value versus iteration (log scale).
(b) PSNR (dB) versus iteration.
PSNR is computed with respect to the ground-truth image after clipping reconstructions to $[0,1]$.
Comparison between BHiPPA, BPGA, DEAL-A, and DEAL-C.}
    \label{fig:cameraman_curves}
\end{figure}

Figure~\ref{fig:cameraman_recons} shows the reconstruction results for the Cameraman deblurring problem. Boosted HiPPA and DEAL-A provide the most accurate reconstructions, achieving PSNR values of $29.37$~dB and $29.59$~dB, respectively. BPGA yields a slightly degraded solution ($28.62$~dB), while DEAL-C produces an over-smoothed reconstruction with lower accuracy ($27.14$~dB). These results are consistent with the trends observed in Fig.~\ref{fig:cameraman_curves}.

\begin{figure}[!htbp]
    \centering
    \begin{subfigure}[t]{0.32\linewidth}
        \centering
        \includegraphics[width=\linewidth]{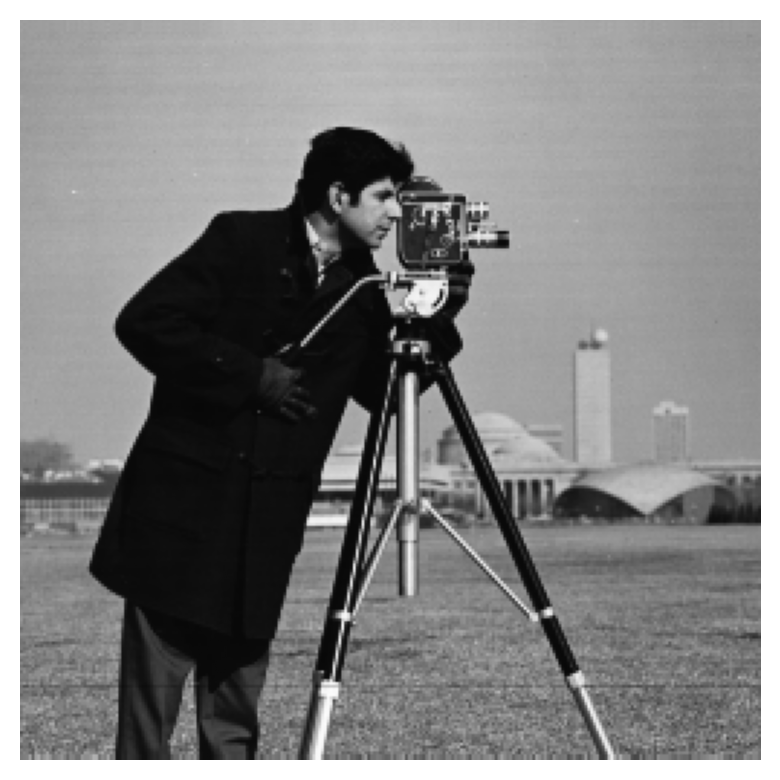}
        \caption{Ground truth.}
        \label{fig:cameraman_true}
    \end{subfigure}\hfill
    \begin{subfigure}[t]{0.32\linewidth}
        \centering
        \includegraphics[width=\linewidth]{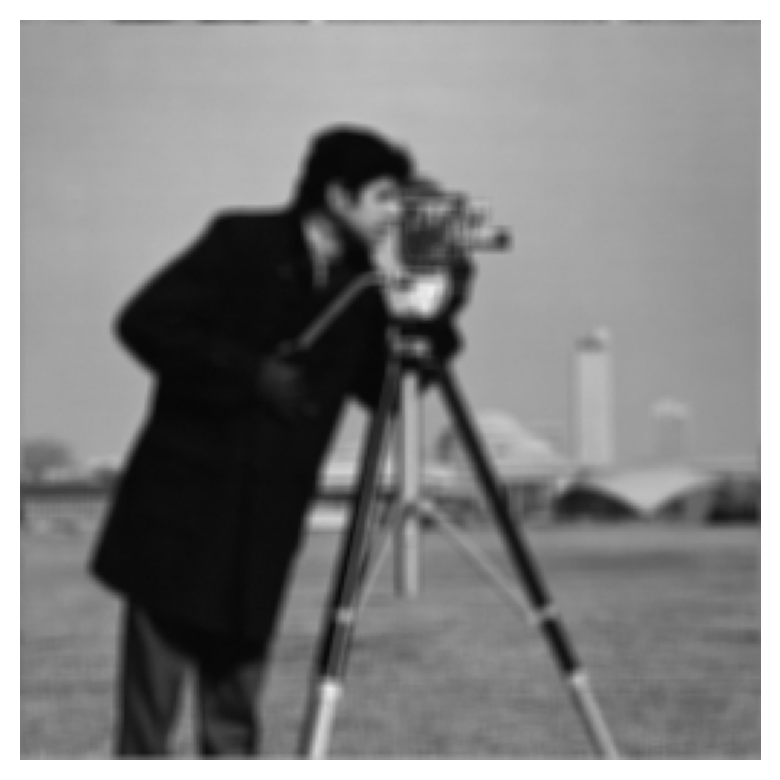}
        \caption{Blurred/noisy observation $y$.}
        \label{fig:cameraman_noisy}
    \end{subfigure}\hfill
    \begin{subfigure}[t]{0.32\linewidth}
        \centering
        \includegraphics[width=\linewidth]{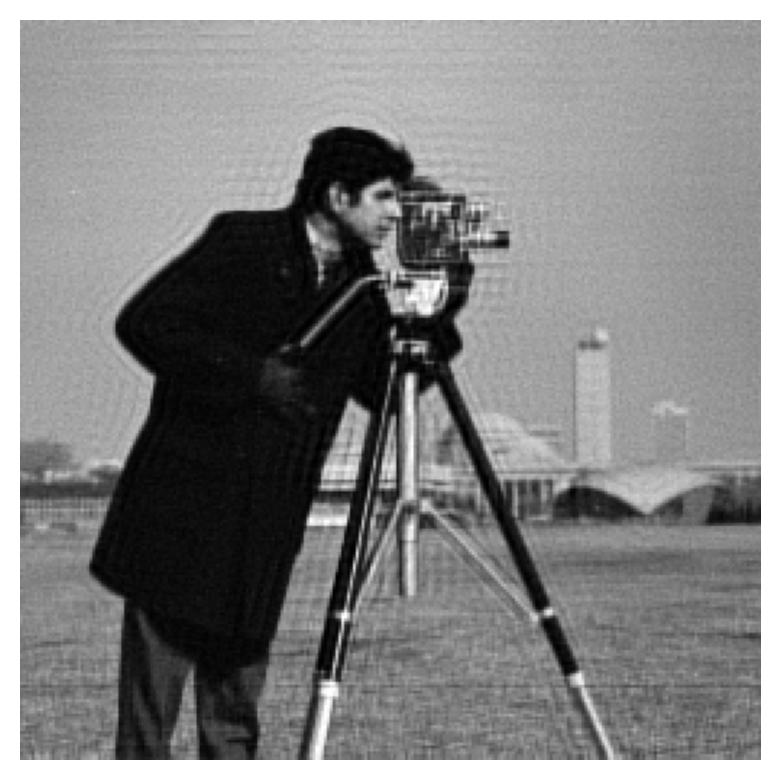}
        \caption{Boosted HiPPA (PSNR $=29.37$ dB).}
        \label{fig:cameraman_high}
    \end{subfigure}
    \vspace{0.6em}
    \begin{subfigure}[t]{0.32\linewidth}
        \centering
        \includegraphics[width=\linewidth]{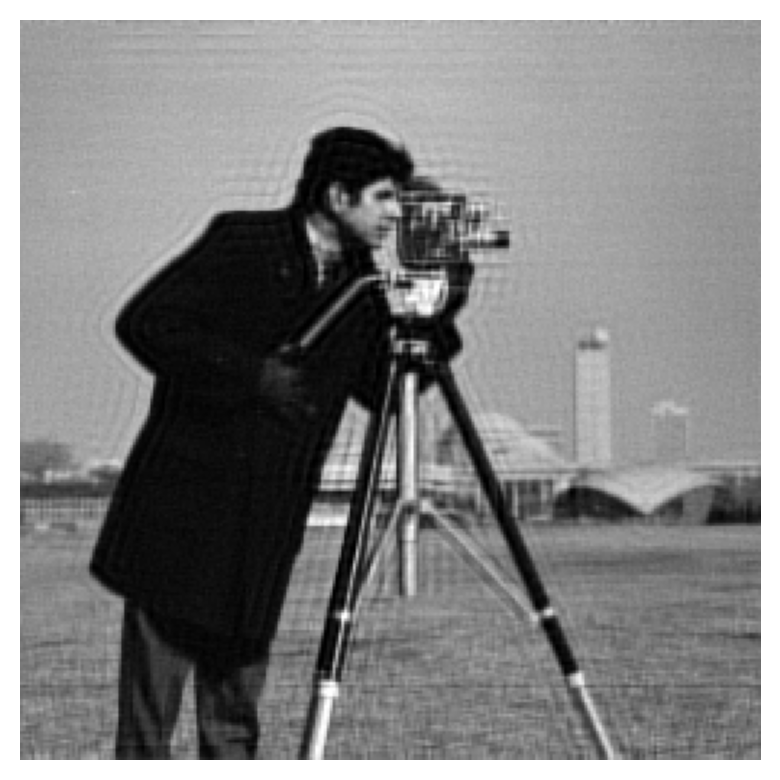}
        \caption{BPGA (PSNR $=28.62$ dB).}
        \label{fig:cameraman_bpga}
    \end{subfigure}\hfill
    \begin{subfigure}[t]{0.32\linewidth}
        \centering
        \includegraphics[width=\linewidth]{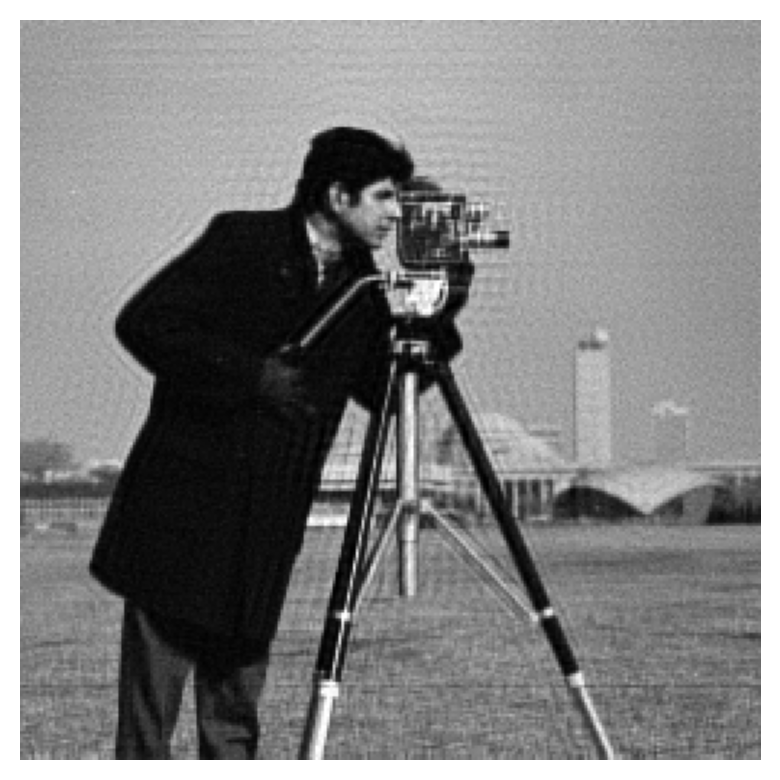}
        \caption{DEAL-A (PSNR $=29.59$ dB).}
        \label{fig:cameraman_deala}
    \end{subfigure}\hfill
    \begin{subfigure}[t]{0.32\linewidth}
        \centering
        \includegraphics[width=\linewidth]{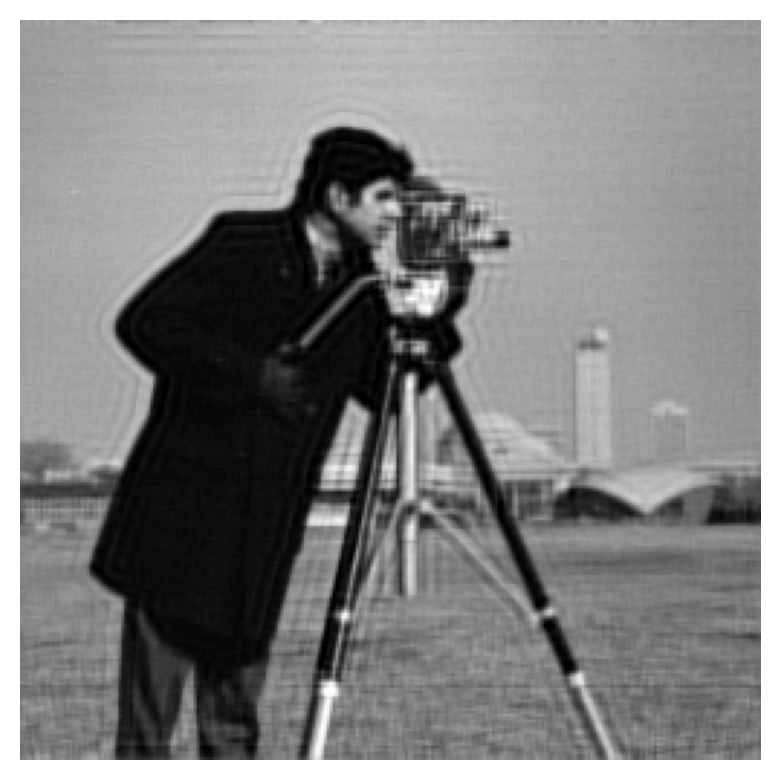}
        \caption{DEAL-C (PSNR $=27.14$ dB).}
        \label{fig:cameraman_dealc}
    \end{subfigure}
    \vspace{-3mm}
    \caption{Cameraman deblurring results.
Top row: (a) ground-truth image, (b) blurred/noisy observation, (c) Boosted HiPPA reconstruction.
Bottom row: (d) BPGA reconstruction, (e) DEAL-A reconstruction, (f) DEAL-C reconstruction.
All reconstructions are clipped to $[0,1]$ for visualization; PSNR values are reported in dB.}
    \label{fig:cameraman_recons}
\end{figure}


\subsection{{\bf Regularized Matrix Completion}}

Matrix completion aims to infer the unobserved entries of a partially observed matrix by exploiting an underlying low-rank structure. This setting arises, for instance, in collaborative filtering, where only a small subset of user-item ratings is available, and the full rating matrix is assumed to admit a low-dimensional latent representation.s
Given a partially observed matrix $A \in \mathbb{R}^{m \times n}$ and an index set
$\Omega \subseteq \{1,\dots,m\}\times\{1,\dots,n\}$, we consider the $\ell_1$-regularized low-rank factorization problem
\begin{equation}\label{eq:mc_l1}
    \min_{U \in \mathbb{R}^{m \times r},\, V \in \mathbb{R}^{n \times r}}
    \; \frac{1}{2} \big\| \mathcal{P}_\Omega(A-UV^\top) \big\|_F^2
    \;+\; \lambda \big( \|U\|_{1} + \|V\|_{1} \big),
\end{equation}
where $\mathcal{P}_\Omega:\mathbb{R}^{m\times n}\to\mathbb{R}^{m\times n}$ is the sampling (masking) operator defined entrywise by
\[
(\mathcal{P}_\Omega(X))_{ij} =
\begin{cases}
X_{ij} & (i,j)\in\Omega,\\
0 & (i,j)\notin\Omega,
\end{cases}
\]
equivalently, $\mathcal{P}_\Omega(X)=M\odot X$ with a binary mask $M\in\{0,1\}^{m\times n}$ such that $M_{ij}=1$ if $(i,j)\in\Omega$ and $M_{ij}=0$ otherwise.
Here, $\|\cdot\|_1$ is the entrywise $\ell_1$-norm and $\lambda>0$ is a regularization parameter.
The objective in~\eqref{eq:mc_l1} is semialgebraic and therefore satisfies the Kurdyka--\L{}ojasiewicz property locally at all points of its domain \cite{bolte2014proximal}.

We consider the MovieLens-100k dataset, consisting of $100{,}000$ ratings arranged in a sparse matrix $A \in \mathbb{R}^{943 \times 1682}$.
The observed entries are randomly split into a training set $\Omega_{\mathrm{tr}}$ (80\%) and a test set $\Omega_{\mathrm{te}}$ (20\%).
The rank parameter $r$ is fixed throughout, and all methods are initialized from the same starting point, where $U^{0}\in\mathbb{R}^{m\times r}$ and $V^{0}\in\mathbb{R}^{n\times r}$ are drawn from $\mathcal{N}(0,1)$.
The parameters $\beta$ and $p$ are selected through preliminary experiments; in view of Figures~\ref{fig:matrix_com}(a) and~\ref{fig:matrix_com}(b), we set $\beta=1$ and $p=1.25$ in all subsequent experiments. In our comparison, we used the following methods:
\begin{itemize}
\item[$\bullet$] \textbf{SGA-G}: Subgradient method with a geometrically decaying step-sizes $\alpha_k=\alpha_0\rho^k$~\cite{davis2018subgradient,rahimi2024projected} for some $0<\rho<1$;
\item[$\bullet$] \textbf{SGA-D}: Subgradient method with diminishing step-sizes, $\alpha_k=\alpha_0/\sqrt{k+1}$~\cite{davis2018subgradient,rahimi2024projected}; 
\item[$\bullet$]\textbf{ADAM}: An adaptive first-order method of \cite{kingma2014ADAM};
\item[$\bullet$]\textbf{BHiPPA~(Boosted HiPPA)}: Algorithm~\ref{alg:boostedhippa}.
\end{itemize}

Figure~\ref{fig:matrix_com}(b) reports the evolution of the training objective value versus iteration.
BHiPPA exhibits a faster decrease during the initial iterations and reaches lower objective values than the competing methods.
In contrast, SGA-G stagnates early due to aggressive step-size decay, while SGA-D and ADAM continue to decrease the objective but at a slower rate and to higher values.
Overall, these results indicate that the high-order envelope mechanism underlying BHiPPA provides more effective descent directions for nonsmooth matrix completion than direct subgradient-based updates.


\begin{figure}[!htbp]
    \centering
        \begin{subfigure}[t]{0.49\linewidth}
        \centering
        \includegraphics[width=\linewidth]{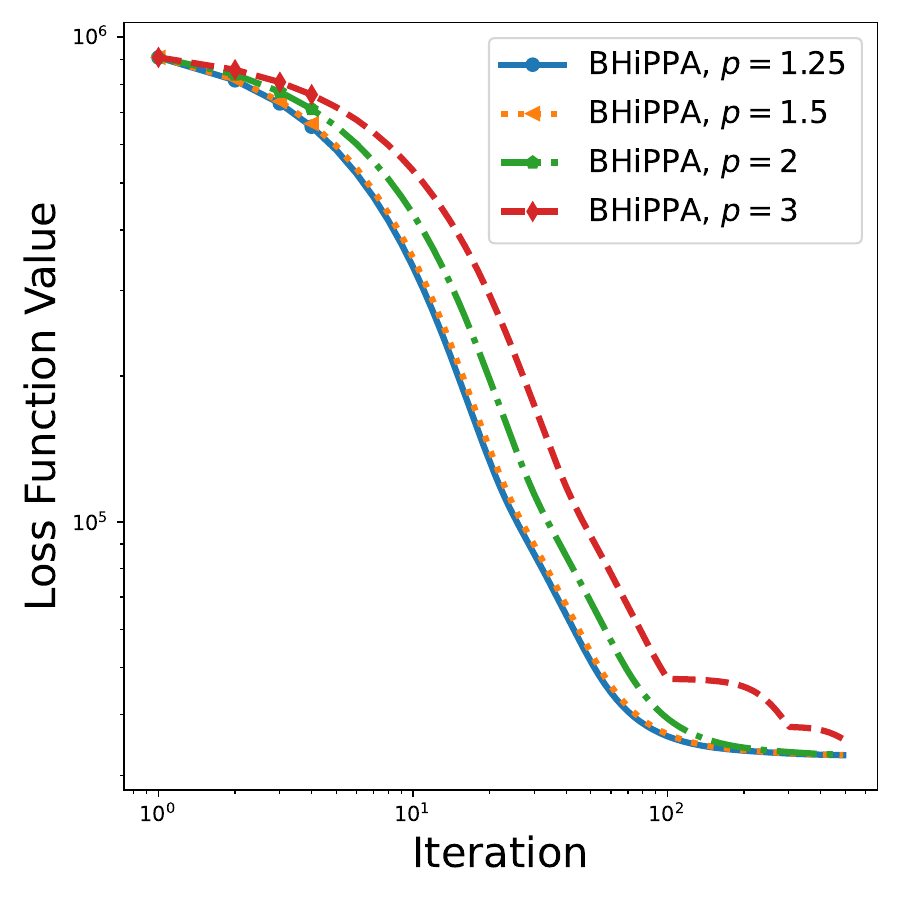}
        \caption{Loss value versus iteration (log scale).}
        \label{fig:cameraman_loss}
    \end{subfigure}
    \begin{subfigure}[t]{0.49\linewidth}
        \centering
        \includegraphics[width=\linewidth]{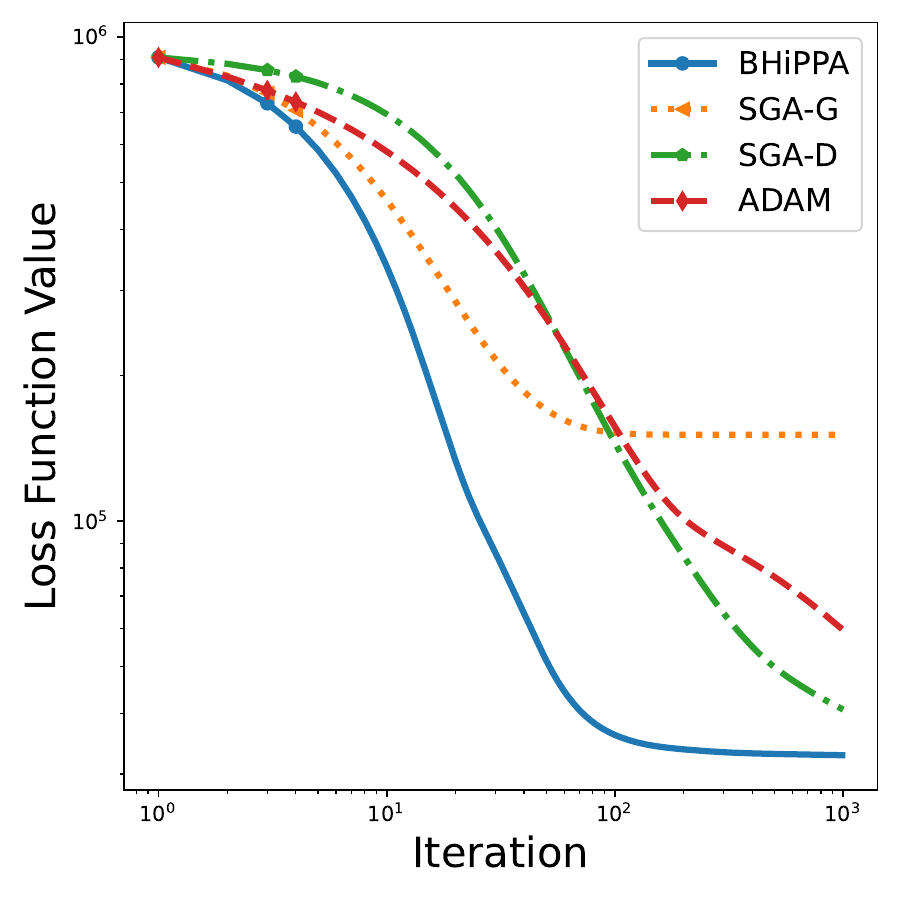}
        \caption{PSNR (dB) versus iteration.}
        \label{fig:cameraman_psnr}
    \end{subfigure}\hfill
    \caption{Regularized matrix completion (MovieLens-100k).
(a) Effect of the smoothing order $p$ on BHiPPA (objective value versus iteration, log scale).
(b) Objective value versus iteration (log scale) for BHiPPA, SGA-G, SGA-D, and ADAM.}
    \label{fig:matrix_com}
\end{figure}

\vspace{-5mm}
\section{Concluding Remarks} \label{sec:conclusion}
We developed a generalized descent framework (DEAL) to minimize smooth objective functions that satisfy the Kurdyka–{\L}ojasiewicz property. The proposed framework unifies several classical descent schemes and provides a transparent analysis of global convergence, convergence rates, and complexity under mild assumptions. In particular, we showed that by appropriately selecting the order of high-order proximal regularization, boosted proximal-point methods attain global linear convergence even for problems with arbitrary KL exponents. Numerical experiments on smooth inverse problems, convex nonsmooth models such as LASSO, and nonconvex matrix completion support the theoretical results and highlight the practical effectiveness of DEAL-type algorithms and BHiPPA-type algorithms.

\appendix
\section{Appendix} \label{App}
 We provide here the proof of our intermediate results for problems \eqref{eq:rpr_obj} and \eqref{eq:GLP-R}.\\[2mm]
%
\textbf{ Proof of Proposition~\ref{prop:phase:HKL}}.
\ref{prop:phase:HKL:a}
Let $\phi(t)=\frac1\delta|t|^\delta$ be with $\delta\in(1,2]$. Then $\phi\in \C^1(\mathbb{R})$ and $\phi'(t)=|t|^{\delta-1}\operatorname{sign}(t)$ with
$\operatorname{sign}(0)=0$. 
Since $q_i$ $(i=1, \ldotp, m$) is polynomial, the function $f(x)=\sum_{i=1}^m \phi(q_i(x))$ belongs to $\C^1(\mathbb{R}^n)$. By the chain rule,
\begin{equation}\label{eq:grad_f}
\nabla f(x) =\sum_{i=1}^m \phi'(q_i(x))\,\nabla q_i(x),
\end{equation}
where $\nabla q_i(x)=2(a_i^\top x)a_i$. Moreover, for $\delta\in(1,2]$, $\phi$ has $\nu$-\textit{H\"older continuous gradient}  on $\mathbb{R}$
with exponent $\delta-1$ \cite[Theorem 6.3]{rodomanov2020smoothness}. In particular, there exists $C_\delta>0$ such that
\begin{equation}\label{eq:phi_holder}
|\phi'(u)-\phi'(v)|\le C_\delta |u-v|^{\delta-1}, \qquad \forall u,v\in\mathbb{R}.
\end{equation}
Fix $R>0$ and let $x,y\in B(0;R)$. Define $s_i(x):=\phi'(q_i(x))$.
From \eqref{eq:grad_f},
\[
\nabla f(x)-\nabla f(y)
=\sum_{i=1}^m (s_i(x)-s_i(y))\nabla q_i(x)
+\sum_{i=1}^m s_i(y)\bigl(\nabla q_i(x)-\nabla q_i(y)\bigr).
\]
Using the identity
\[
q_i(x)-q_i(y)
=(a_i^\top x)^2-(a_i^\top y)^2
=(a_i^\top(x-y))(a_i^\top(x+y)),
\]
and applying $\|x+y\|\le 2R$, we come up with
\begin{equation}\label{eq:qi_diff}
|q_i(x)-q_i(y)|
\le 2R\,\|a_i\|^2\,\|x-y\|.
\end{equation}
Combining \eqref{eq:phi_holder} and \eqref{eq:qi_diff} yields
\begin{equation}\label{eq:si_diff}
|s_i(x)-s_i(y)|
\le C_\delta(2R)^{\delta-1}\|a_i\|^{2(\delta-1)}\|x-y\|^{\delta-1}.
\end{equation}
On the other hand, for all $x\in B(0;R)$,
\begin{equation}\label{eq:grad_qi_bound}
\|\nabla q_i(x)\|\le 2R\|a_i\|^2,
\end{equation}
and
\begin{equation}\label{eq:grad_qi_lip}
\|\nabla q_i(x)-\nabla q_i(y)\|
\le 2\|a_i\|^2\|x-y\|.
\end{equation}
Invoking \eqref{eq:si_diff} and \eqref{eq:grad_qi_bound} leads to
\[
\|(s_i(x)-s_i(y))\nabla q_i(x)\|
\le 2C_\delta(2R)^{\delta-1}R\,\|a_i\|^{2\delta}\|x-y\|^{\delta-1}.
\]
Moreover, since $|s_i(y)|=|q_i(y)|^{\delta-1}$ and $|q_i(y)|\le \|a_i\|^2R^2+|b_i|$ on $B(0;R)$, there exists $M_{i,R}>0$ such that
\[
\|s_i(y)(\nabla q_i(x)-\nabla q_i(y))\|
\le 2M_{i,R}\|a_i\|^2\|x-y\|.
\]
Thanks to $\delta-1>0$ and $\|x-y\|\le 2R$, it holds that $\|x-y\|\le (2R)^{2-\delta}\|x-y\|^{\delta-1}$.
Summing over $i$ yields
\[
\|\nabla f(x)-\nabla f(y)\|
\le L_R\|x-y\|^{\delta-1},
\quad \forall x,y\in B(0;R),
\]
for some constant $L_R>0$. 
\\
\ref{prop:phase:HKL:b} From assumption, $q_i(x^*)=0$ for all $i$. Since $\nabla^2 q_i(x)=2a_i a_i^\top$, by choosing $h=x-x^*$, a Taylor expansion yields
\begin{equation}\label{eq:qi_exact_expansion}
q_i(x^*+h)=\nabla q_i(x^*)^\top h + (a_i^\top h)^2.
\end{equation}
Setting $r_i(h)=(a_i^\top h)^2$ for each $i$,  $q(x):=(q_1(x),\dots,q_m(x))^\top$, and $G(x):=[\nabla q_1(x),\dots,\nabla q_m(x)]$, this implies
\begin{equation}\label{eq:q_vector_form}
q(x^*+h)=G(x^*)^\top h + r(h).
\end{equation}
There exists a constant $c_1>0$ such that $\|r(h)\|_2\le c_1\|h\|^2$.
Using the reverse triangle inequality and the singular value bound,
we obtain
\begin{align*}
\|q(x^*+h)\|
&= \|G(x^*)^\top h + r(h)\| \ge \|G(x^*)^\top h\| - \|r(h)\| \\
&\ge \sigma_{\min}\big(G(x^*)\big)\|h\| - c_1\|h\|^2 = \big(\sigma_{\min}(G(x^*)) - c_1\|h\|\big)\|h\|.
\end{align*}
Choosing
\[
r := \min\left\{1,\ \frac{\sigma_{\min}(G(x^*))}{2c_1}\right\},
\]
for all $\|h\| \le r$, leads to
\begin{equation}\label{eq:lower_growth}
\frac12\,\sigma_{\min}\big(G(x^*)\big)\,\|h\|
\le \|q(x^*+h)\|.
\end{equation}
On the other hand, applying the triangle inequality to \eqref{eq:q_vector_form}
gives
\begin{align*}
\|q(x^*+h)\|
&\le \|G(x^*)^\top h\| + \|r(h)\|\le \|G(x^*)\|\,\|h\| + c_1\|h\|^2.
\end{align*}
For $\|h\| \le r \le 1$, we have $\|h\|^2 \le \|h\|$, and hence
\begin{equation}\label{eq:upper_growth}
\|q(x^*+h)\|
\le c_2 \|h\|,
\end{equation}
where $c_2 := \|G(x^*)\| + c_1$.
Combining \eqref{eq:lower_growth} and \eqref{eq:upper_growth}, we conclude
that there exists $r>0$ such that for all $\|h\| \le r$,
\begin{equation}\label{eq:q_linear_growth}
\frac12\,\sigma_{\min}\big(G(x^*)\big)\,\|h\|
\le \|q(x^*+h)\|
\le c_2 \|h\|.
\end{equation}
By defining $w_i(x):=|q_i(x)|^{\delta-1}\operatorname{sign}(q_i(x))$ and 
$w(x):=(w_1(x),\dots,w_m(x))^\top$, \eqref{eq:grad_f} can be rewritten as
\begin{equation}\label{eq:grad_matrix}
\nabla f(x)=G(x)w(x).
\end{equation}
The continuity of $G(\cdot)$ and $\sigma_{\min}(G(x^*))>0$, ensures that there exist $r>0$ and $\underline\sigma>0$ such that
$\sigma_{\min}(G(x))\ge \underline\sigma$ for all $x\in B(x^*;r)$. Hence,
\begin{equation}\label{eq:grad_lower_bound}
\|\nabla f(x)\|\ge \underline\sigma\|w(x)\|.
\end{equation}
Moreover, it is evident that
\[
f(x)=\frac1\delta\sum_{i=1}^m |q_i(x)|^\delta=\frac1\delta\|q(x)\|_\delta^\delta,
\qquad
\|w(x)\|_2=\Big(\sum_{i=1}^m |q_i(x)|^{2(\delta-1)}\Big)^{1/2}=\|q(x)\|_{2(\delta-1)}^{\delta-1}.
\]
Since $2(\delta-1)<\delta$, the monotonicity of $\ell$-norms yields $\|q(x)\|_\delta\le \|q(x)\|_{2(\delta-1)}$. Therefore,
\[
f(x)^{(\delta-1)/\delta}
=\delta^{-(\delta-1)/\delta}\|q(x)\|_\delta^{\delta-1}
\le \delta^{-(\delta-1)/\delta}\|q(x)\|_{2(\delta-1)}^{\delta-1}
=\delta^{-(\delta-1)/\delta}\|w(x)\|.
\]
Together with \eqref{eq:grad_lower_bound}, this implies
\[
\|\nabla f(x)\|
\ge \underline\sigma\,\delta^{(\delta-1)/\delta} f(x)^{(\delta-1)/\delta},
\qquad \forall x\in B(x^*;r).
\]
Since $f(x^*)=0$, this proves \ref{prop:phase:HKL:b} with $\vartheta=(\delta-1)/\delta$.
\\
Assertion~\ref{prop:phase:HKL:c} is concluded from Assertions~\ref{prop:phase:HKL:a}~and~\ref{prop:phase:HKL:b}.\qed
$~$\\[3mm]
\textbf{Proof of Proposition~\ref{prop:invprob1}.}
    By Proposition \ref{prop:invprob}, the function $\psi(x)=\tfrac{1}{\delta}\|Ax-b\|^{\delta}$ has a $(\delta-1)$-H\"{o}lder gradient with constant $2^{2-\delta}\|A\|^\delta$.
    Therefore, it suffices to show that the function $\mathcal{R}$ has a $(\delta-1)$-H\"{o}lder gradient with constant $2^{4-\delta} n^{1-\nicefrac{\delta}{2}}\delta$.
    We first indicate that the derivative of the scalar function $r$ is $(\delta-1)$-H\"older with constant $2^{4-\delta}\delta$. Let $t_1,t_2\in \R$. Three possible cases arise: (i) $|t_1|\le 1$ and $|t_2|\le 1$; (ii) $|t_1|\le 1$ and $|t_2|> 1$; (iii) $|t_1|> 1$ and $|t_2|> 1$.\\
    Let us consider Case (i) for which we have
    \begin{align*}
        |r'(t_1) - r'(t_2)| &=\Big| 2\delta|t_1|^{\delta-2}t_1 - 2\delta t_1 -2\delta|t_2|^{\delta-2}t_2 + 2\delta t_2\Big| \le 2\delta\Big| |t_1|^{\delta -2}t_1 -|t_2|^{\delta -2}t_2 \Big| + 2\delta |t_1-t_2|\\
        &\le 2^{4-\delta}\delta |t_1-t_2|^{\delta-1}.
    \end{align*}
    In Case (ii), $|t_1|\le 1$ and $|t_2|> 1$, it holds that $r'(t_2)=0=r'(1)$. Therefore, based on Case (i), we obtain
    \[
    |r'(t_1) - r'(t_2)| = |r'(t_1) - r'(1)| \le 2^{4-\delta}\delta |t_1-1|^{\delta-1}
    \le 2^{4-\delta}\delta |t_1-t_2|^{\delta-1}.
    \]
    For Case (iii), $r'(t_1)=r'(t_2)=0$ and the inequality holds trivially.\\
    Now, let $x,y\in \R^n$. We obtain
    \begin{align*}
        \|\nabla\mathcal{R}(x)-\nabla \mathcal{R}(y)\|^2 = \sum_{i=1}^n |r'(x_i) - r'(y_i)|^2\le
        2^{2(4-\delta)}\delta^2 \sum_{i=1}^n|x_i-y_i|^{2(\delta-1)}\le 2^{2(4-\delta)}\delta^2 n^{2-\delta} \|x-y\|^{2(\delta-1)},
    \end{align*}
    completing the proof.\qed

\addcontentsline{toc}{section}{References}
\bibliographystyle{spmpsci}      
\bibliography{Bibliography}

\end{document}